\newtheorem{thm}{Theorem}[section]
\newtheorem{lem}[thm]{Lemma}
\newtheorem{defn}[thm]{Definition}
\numberwithin{equation}{section}
\begin{document}

\title{\bf A Kastler-Kalau-Walze type theorem for the $J$-twist $D_{J}$ of the Dirac operator}
\author{Siyao Liu \hskip 0.4 true cm  Yong Wang$^{*}$}

\thanks{{\scriptsize
\hskip -0.4 true cm \textit{2010 Mathematics Subject Classification:}
53C40; 53C42.
\newline \textit{Key words and phrases:} Dirac operator; the $J$-twist of the Dirac operator; Kastler-Kalau-Walze type theorems.
\newline \textit{$^{*}$Corresponding author}}}

\maketitle

\begin{abstract}
 \indent In this paper, we give a Lichnerowicz type formula for the $J$-twist $D_{J}$ of the Dirac operator. And we prove a Kastler-Kalau-Walze type theorem for the $J$-twist $D_{J}$ of the Dirac operator on 3-dimensional and 4-dimensional almost product Riemannian spin manifold with boundary.
\end{abstract}

\vskip 0.2 true cm


\pagestyle{myheadings}
\markboth{\rightline {\scriptsize Liu}}
         {\leftline{\scriptsize  A Kastler-Kalau-Walze type theorem for the $J$-twist $D_{J}$ of the Dirac operator}}

\bigskip
\bigskip


\section{ Introduction}
The noncommutative residue was found in \cite{Gu,Wo}. Since noncommutative residues are of great importance to the study of noncommutative geometry, more and more attention has been attached to the study of noncommutative residues.
Connes put forward that the noncommutative residue of the square of the inverse of the Dirac operator was proportioned to the Einstein-Hilbert action, which is called the Kastler-Kalau-Walze theorem now\cite{Co1,Co2}. Kastler gave a brute-force proof of this theorem\cite{Ka}. In the same time, Kalau-Walze and Ackermann proved this theorem by using the normal coordinates system and the heat kernel expansion, respectively\cite{KW,Ac}. The result of Connes was extended to the higher dimensional case \cite{U}. Wang generalized the Connes' results to the case of manifolds with boundary in \cite{Wa1,Wa2}, and proved the Kastler-Kalau-Walze type theorem for the Dirac operator and the signature operator on lower-dimensional manifolds with boundary. In \cite{Wa3}, for the Dirac operator, Wang computed $\widetilde{{\rm Wres}}[\pi^+D^{-1}\circ\pi^+D^{-1}]$, in these cases the boundary term vanished.
Most of the operators studied in the literature have the leading symbol $\sqrt{-1}c(\xi)$\cite{Wa4,Wa5,WW,WWW,Wa}. However, Wu and Wang studied operators with the leading symbol $-\widehat{c}(V)c(\xi)$. In \cite{WW2}, Wu and Wang gave the proof of Kastler-Kalau-Walze type theorems  of the operators $\sqrt{-1}\widehat{c}(V)(d+\delta)$ and $-\sqrt{-1}\widehat{c}(V)(d+\delta)$ on 3,4-dimensional oriented compact manifolds with or without boundary.

On the other hand, some preliminaries and lemmas about the Dirac operator $D$ and the $J$-twist are given in \cite{K}. In \cite{Chen1,Chen2}, the author got estimates on the higher eigenvalues of the Dirac operator on locally reducible Riemannian manifolds by the $J$-twist of the Dirac operator. It can be obtained by simple calculations that the leading symbol of the $J$-twist $D_{J}$ of the Dirac operator is not $\sqrt{-1}c(\xi)$.

The motivation of this paper is to prove a Kastler-Kalau-Walze type theorem for the $J$-twist $D_{J}$ of the Dirac operator. Unlike the case of the Dirac operator in \cite{Wa3}, we find the boundary term is not zero for the $J$-twist $D_{J}$ of the Dirac operator.

A brief description of the organization of this paper is as follows.
In Section 2, this paper will firstly introduce the basic notions of the almost product Riemannian manifold and the $J$-twist $D_{J}$ of the Dirac operator. We also give a Lichnerowicz type formula for the $J$-twist $D_{J}$ of the Dirac operator and a Kastler-Kalau-Walze type theorem for the $J$-twist $D_{J}$ of the Dirac operator on $n$-dimensional almost product Riemannian spin manifold without boundary.
In the next section, we calculate $\widetilde{{\rm Wres}}[\pi^+{{D}_{J}}^{-1}\circ\pi^+{{D}_{J}}^{-1}]$ for $4$-dimensional almost product Riemannian spin manifold with boundary.
In Section 4, we prove the Kastler-Kalau-Walze type theorem on 3-dimensional almost product Riemannian spin manifold with boundary for the $J$-twist $D_{J}$ of the Dirac operator.

\vskip 1 true cm

\section{ The $J$-twist $D_{J}$ of the Dirac operator $D$ and its Lichnerowicz formula}

We give some definitions and basic notions which we will use in this paper.

Let $M$ be a smooth compact oriented Riemannian $n$-dimensional manifolds without boundary and $N$ be a vector bundle on $M$.
We say that $P$ is a differential operator of Laplace type, if it has locally the form
\begin{equation}
P=-(g^{ij}\partial_i\partial_j+A^i\partial_i+B),
\end{equation}
where $\partial_{i}$  is a natural local frame on $TM,$ $(g^{ij})_{1\leq i,j\leq n}$ is the inverse matrix associated to the metric
matrix  $(g_{ij})_{1\leq i,j\leq n}$ on $M,$ $A^{i}$ and $B$ are smooth sections of $\textrm{End}(N)$ on $M$ (endomorphism).
If $P$ satisfies the form (2.1), then there is a unique
connection $\nabla$ on $N$ and a unique endomorphism $E$ such that
 \begin{equation}
P=-[g^{ij}(\nabla_{\partial_{i}}\nabla_{\partial_{j}}- \nabla_{\nabla^{L}_{\partial_{i}}\partial_{j}})+E],
\end{equation}
where $\nabla^{L}$ is the Levi-Civita connection on $M$. Moreover
(with local frames of $T^{*}M$ and $N$), $\nabla_{\partial_{i}}=\partial_{i}+\omega_{i} $
and $E$ are related to $g^{ij}$, $A^{i}$ and $B$ through
 \begin{eqnarray}
&&\omega_{i}=\frac{1}{2}g_{ij}\big(A^{i}+g^{kl}\Gamma_{ kl}^{j} \texttt{id}\big),\\
&&E=B-g^{ij}\big(\partial_{i}(\omega_{j})+\omega_{i}\omega_{j}-\omega_{k}\Gamma_{ ij}^{k} \big),
\end{eqnarray}
where $\Gamma_{ kl}^{j}$ is the  Christoffel coefficient of $\nabla^{L}$.

Let $M$ be a $n$-dimensional ($n\geq 3$) oriented compact Riemannian manifold with a Riemannian metric $g^{M}$.
We recall that the Dirac operator $D$ is locally given as following:
\begin{equation}
D=\sum_{i, j=1}^{n}g^{ij}c(\partial_{i})\nabla_{\partial_{j}}^{S}=\sum_{i=1}^{n}c(e_{i})\nabla_{e_{i}}^{S},
\end{equation}
where $c(e_{i})$ be the Clifford action which satisfies the relation
\begin{align}
&c(e_{i})c(e_{j})+c(e_{j})c(e_{i})=-2g^{M}(e_{i}, e_{j})=-2\delta_i^j,
\end{align}
\begin{align}
&\nabla_{\partial_{j}}^{S}=\partial_{i}+\sigma_{i}
\end{align}
and
\begin{align}
&\sigma_{i}=\frac{1}{4}\sum_{j, k=1}^{n}\langle \nabla_{\partial_{i}}^{L}e_{j}, e_{k}\rangle c(e_{j})c(e_{k}).
\end{align}

Let $J$ be a $(1, 1)$-tensor field on $(M, g^M)$ such that $J^2=\texttt{id},$
\begin{align}
&g^M(J(X), J(Y))=g^M(X, Y),
\end{align}
for all vector fields $X,Y\in \Gamma(TM).$ Here $\texttt{id}$ stands for the identity map. $(M, g^M, J)$ is an almost product Riemannian manifold. We can also define on almost product Riemannian spin manifold the following $J$-twist $D_{J}$ of the Dirac operator $D$ by
\begin{align}
&D_{J}:=\sum_{i=1}^{n}c(e_{i})\nabla^{S}_{J(e_{i})}=\sum_{i=1}^{n}c[J(e_{i})]\nabla^{S}_{e_{i}}.
\end{align}

Let $\xi=\sum_{k}\xi_{j}dx_{j},$  $\nabla^L_{\partial_{i}}\partial_{j}=\sum_{k}\Gamma_{ij}^{k}\partial_{k},$  we denote that
\begin{align}
 \Gamma^{k}=g^{ij}\Gamma_{ij}^{k};\ \sigma^{j}=g^{ij}\sigma_{i};\ \partial^{j}=g^{ij}\partial_{i}.\nonumber
\end{align}

By (2.4), (2.5) in \cite{K}
\begin{align}
{D_{J}}^{2}=-\frac{1}{8}\sum_{i,j,k,l=1}^{n}R(J(e_{i}), J(e_{j}), e_{k}, e_{l})c(e_{i})c(e_{j})c(e_{k})c(e_{l})+\triangle+\sum_{\alpha,\beta=1}^{n}c[J(e_{\alpha})]c[(\nabla^{L}_{e_{\alpha}}J)e_{\beta}]\nabla^{S}_{e_{\beta}}.
\end{align}
By (2.8) in \cite{Wa}, we have
\begin{align}
\triangle=-g^{ij}\partial_{i}\partial_{j}-2\sigma^{j}\partial_{j}+\Gamma^{k}\partial_{k}-g^{ij}[\partial_{i}(\sigma_{j})+\sigma_{i}\sigma_{j}-\Gamma^{k}_{ij}\sigma_{k}]+\frac{1}{4}s,
\end{align}
where $s$ is the scalar curvature. We note that
\begin{align}
{D_{J}}^{2}&=-\frac{1}{8}\sum_{i,j,k,l=1}^{n}R(J(e_{i}), J(e_{j}), e_{k}, e_{l})c(e_{i})c(e_{j})c(e_{k})c(e_{l})-g^{ij}\partial_{i}\partial_{j}-2\sigma^{j}\partial_{j}+\Gamma^{k}\partial_{k}\\
&-g^{ij}[\partial_{i}(\sigma_{j})+\sigma_{i}\sigma_{j}-\Gamma^{k}_{ij}\sigma_{k}]+\frac{1}{4}s+\sum_{\alpha,\beta=1}^{n}c[J(e_{\alpha})]c[(\nabla^{L}_{e_{\alpha}}J)e_{\beta}]\sum_{k=1}^{n}\langle e_{\beta}, dx_{k}\rangle\nabla^{S}_{\partial_{k}}\nonumber\\
&=-\frac{1}{8}\sum_{i,j,k,l=1}^{n}R(J(e_{i}), J(e_{j}), e_{k}, e_{l})c(e_{i})c(e_{j})c(e_{k})c(e_{l})-g^{ij}\partial_{i}\partial_{j}-2\sigma^{j}\partial_{j}+\Gamma^{k}\partial_{k}\nonumber\\
&-g^{ij}[\partial_{i}(\sigma_{j})+\sigma_{i}\sigma_{j}-\Gamma^{k}_{ij}\sigma_{k}]+\frac{1}{4}s+\sum_{\alpha,k=1}^{n}c[J(e_{\alpha})]c[(\nabla^{L}_{e_{\alpha}}J)(dx_{k})^{*}]\nabla^{S}_{\partial_{k}},\nonumber
\end{align}
where $e_i^*=g^{M}(e_i,\cdot)$ and $\langle X, dx_{k}\rangle=g^{M}(X, (dx_{k})^{*}),$ for a vector field $X.$
Then we have
\begin{align}
(\omega_{i})_{{D_{J}}^{2}}&=\sigma_{i}-\frac{1}{2}\sum_{\alpha,j=1}^{n}g_{ij}c[J(e_{\alpha})]c[(\nabla^{L}_{e_{\alpha}}J)(dx_{j})^{*}],
\end{align}
\begin{align}
E_{{D_{J}}^{2}}&=\frac{1}{8}\sum_{i,j,k,l=1}^{n}R(J(e_{i}), J(e_{j}), e_{k}, e_{l})c(e_{i})c(e_{j})c(e_{k})c(e_{l})+\sum_{i,j=1}^{n}g^{ij}[\partial_{i}(\sigma_{j})+\sigma_{i}\sigma_{j}-\Gamma^{k}_{ij}\sigma_{k}]\\
&-\sum_{\alpha,j=1}^{n}c[J(e_{\alpha})]c[(\nabla^{L}_{e_{\alpha}}J)(dx_{j})^{*}]\sigma_{j}-\sum_{i,j=1}^{n}g^{ij}\big[\partial_{i}(\sigma_{j}-\frac{1}{2}\sum_{\nu,l=1}^{n}g_{jl}c[J(e_{\nu})]c[(\nabla^{L}_{e_{\nu}}J)(dx_{l})^{*}])\nonumber\\
&+(\sigma_{i}-\frac{1}{2}\sum_{\alpha,p=1}^{n}g_{ip}c[J(e_{\alpha})]c[(\nabla^{L}_{e_{\alpha}}J)(dx_{p})^{*}])(\sigma_{j}-\frac{1}{2}\sum_{\nu,l=1}^{n}g_{jl}c[J(e_{\nu})]c[(\nabla^{L}_{e_{\nu}}J)(dx_{l})^{*}])\nonumber\\
&-(\sigma_{k}-\frac{1}{2}\sum_{\mu,h=1}^{n}g_{kh}c[J(e_{\mu})]c[(\nabla^{L}_{e_{\mu}}J)(dx_{h})^{*}])\Gamma_{ij}^{k}\big]-\frac{1}{4}s.\nonumber
\end{align}

Since $E$ is globally defined on $M$, taking normal coordinates at $x_0$, we have $\sigma^{i}(x_0)=0$, $\Gamma^k(x_0)=0,$ $g^{ij}(x_0)=\delta^j_i,$ $\partial^{j}(x_0)=e_{j},$ $\partial^{j}[c(\partial_{j})](x_0)=0,$ $\nabla^{L}_{e_{j}}e_{k}(x_0)=0$ and $\nabla^{S}_{Y}(c(X))=c(\nabla^{L}_{Y}X),$ a simple calculation shows that
\begin{align}
&\frac{1}{2}\sum_{\nu,j=1}^{n}\partial^{j}(c[J(e_{\nu})]c[(\nabla^{L}_{e_{\nu}}J)(dx_{j})^{*}])(x_0)=\frac{1}{2}\sum_{\nu,j=1}^{n}\nabla_{e_{j}}^{S}(c[J(e_{\nu})]c[(\nabla^{L}_{e_{\nu}}J)e_{j}])(x_0)\\
&=\frac{1}{2}\sum_{\nu,j=1}^{n}c[\nabla_{e_{j}}^{L}(J)e_{\nu}]c[(\nabla^{L}_{e_{\nu}}J)e_{j}](x_0)+\frac{1}{2}\sum_{\nu,j=1}^{n}c[J(e_{\nu})]c[\nabla^{L}_{e_{j}}(\nabla^{L}_{e_{\nu}}(J)e_{j})](x_0)\nonumber\\
&=\frac{1}{2}\sum_{\nu,j=1}^{n}c[\nabla_{e_{j}}^{L}(J)e_{\nu}]c[(\nabla^{L}_{e_{\nu}}J)e_{j}](x_0)+\frac{1}{2}\sum_{\nu,j=1}^{n}c[J(e_{\nu})]c[(\nabla^{L}_{e_{j}}(\nabla^{L}_{e_{\nu}}(J)))e_{j}-(\nabla^{L}_{\nabla^{L}_{e_{j}}e_{\nu}}(J))e_{j}](x_0),\nonumber
\end{align}
so that
\begin{align}
E_{{D_{J}}^{2}}(x_0)=&\frac{1}{8}\sum_{i,j,k,l=1}^{n}R(J(e_{i}), J(e_{j}), e_{k}, e_{l})c(e_{i})c(e_{j})c(e_{k})c(e_{l})+\frac{1}{2}\sum_{\nu,j=1}^{n}c[\nabla_{e_{j}}^{L}(J)e_{\nu}]c[(\nabla^{L}_{e_{\nu}}J)e_{j}]\\
&+\frac{1}{2}\sum_{\nu,j=1}^{n}c[J(e_{\nu})]c[(\nabla^{L}_{e_{j}}(\nabla^{L}_{e_{\nu}}(J)))e_{j}-(\nabla^{L}_{\nabla^{L}_{e_{j}}e_{\nu}}(J))e_{j}]\nonumber\\
&-\frac{1}{4}\sum_{\alpha,\nu,j=1}^{n}c[J(e_{\alpha})]c[(\nabla^{L}_{e_{\alpha}}J)e_{j}]c[J(e_{\nu})]c[(\nabla^{L}_{e_{\nu}}J)e_{j}]-\frac{1}{4}s.\nonumber
\end{align}
We get the following Lichnerowicz formulas:
\begin{align}
{D_{J}}^{2}
=&-g^{ij}(\nabla_{\partial_{i}}\nabla_{\partial_{j}}-\nabla_{\nabla^{L}_{\partial_{i}}\partial_{j}})-\frac{1}{8}\sum_{i,j,k,l=1}^{n}R(J(e_{i}), J(e_{j}), e_{k}, e_{l})c(e_{i})c(e_{j})c(e_{k})c(e_{l})+\frac{1}{4}s\\
&-\frac{1}{2}\sum_{\nu,j=1}^{n}c[\nabla_{e_{j}}^{L}(J)e_{\nu}]c[(\nabla^{L}_{e_{\nu}}J)e_{j}]-\frac{1}{2}\sum_{\nu,j=1}^{n}c[J(e_{\nu})]c[(\nabla^{L}_{e_{j}}(\nabla^{L}_{e_{\nu}}(J)))e_{j}-(\nabla^{L}_{\nabla^{L}_{e_{j}}e_{\nu}}(J))e_{j}]\nonumber\\
&+\frac{1}{4}\sum_{\alpha,\nu,j=1}^{n}c[J(e_{\alpha})]c[(\nabla^{L}_{e_{\alpha}}J)e_{j}]c[J(e_{\nu})]c[(\nabla^{L}_{e_{\nu}}J)e_{j}].\nonumber
\end{align}

According to the detailed descriptions in \cite{Ac}, we know that the noncommutative residue of a generalized laplacian $\widetilde{\Delta}$ is expressed as
\begin{equation}
(n-2)\Phi_{2}(\widetilde{\Delta})=(4\pi)^{-\frac{n}{2}}\Gamma(\frac{n}{2})\widetilde{res}(\widetilde{\Delta}^{-\frac{n}{2}+1}),
\end{equation}
where $\Phi_{2}(\widetilde{\Delta})$ denotes the integral over the diagonal part of the second
coefficient of the heat kernel expansion of $\widetilde{\Delta}$.
Now let $\widetilde{\Delta}={D_{J}}^{2}$. Since ${D_{J}}^{2}$ is a generalized laplacian, we can suppose ${D_{J}}^{2}=\Delta-E$, then, we have
\begin{align}
{\rm Wres}({D_{J}}^{2})^{-\frac{n-2}{2}}
=\frac{(n-2)(4\pi)^{\frac{n}{2}}}{(\frac{n}{2}-1)!}\int_{M}{\rm tr}(\frac{1}{6}s+E_{{D_{J}}^{2}})d{\rm Vol_{M} },
\end{align}
where ${\rm Wres}$ denote the noncommutative residue, ${\rm tr}$ denote ${\rm trace}$. By
\begin{align}
{\rm tr}[c(X)c(Y)]=-g^{M}(X, Y){\rm tr}[\texttt{id}],
\end{align}
\begin{align}
{\rm tr}[c(X)c(Y)c(Z)c(W)]&=g^{M}(X, W)g^{M}(Y, Z){\rm tr}[\texttt{id}]-g^{M}(X, Z)g^{M}(Y, W){\rm tr}[\texttt{id}]\\
&+g^{M}(X, Y)g^{M}(Z, W){\rm tr}[\texttt{id}],\nonumber
\end{align}
where $X,Y,Z,W\in \Gamma(TM).$ We calculate that
\begin{align}
&\sum_{i,j,k,l=1}^{n}{\rm tr}[R(J(e_{i}), J(e_{j}), e_{k}, e_{l})c(e_{i})c(e_{j})c(e_{k})c(e_{l})]\\
&=\sum_{i,j,k,l=1}^{n}R(J(e_{i}), J(e_{j}), e_{k}, e_{l}){\rm tr}[c(e_{i})c(e_{j})c(e_{k})c(e_{l})]\nonumber\\
&=\sum_{i,j=1}^{n}R(J(e_{i}), J(e_{j}), e_{j}, e_{i}){\rm tr}[\texttt{id}]-\sum_{i,j=1}^{n}R(J(e_{i}), J(e_{j}), e_{i}, e_{j}){\rm tr}[\texttt{id}]\nonumber\\
&=2\sum_{i,j=1}^{n}R(J(e_{i}), J(e_{j}), e_{j}, e_{i}){\rm tr}[\texttt{id}],\nonumber
\end{align}
\begin{align}
&\sum_{\nu,j=1}^{n}{\rm tr}[c[\nabla_{e_{j}}^{L}(J)e_{\nu}]c[(\nabla^{L}_{e_{\nu}}J)e_{j}]]=-\sum_{\nu,j=1}^{n}g^{M}(\nabla_{e_{j}}^{L}(J)e_{\nu}, (\nabla^{L}_{e_{\nu}}J)e_{j}){\rm tr}[\texttt{id}],
\end{align}
\begin{align}
&\sum_{\nu,j=1}^{n}{\rm tr}[c[J(e_{\nu})]c[(\nabla^{L}_{e_{j}}(\nabla^{L}_{e_{\nu}}(J)))e_{j}-(\nabla^{L}_{\nabla^{L}_{e_{j}}e_{\nu}}(J))e_{j}]]\\
&=-\sum_{\nu,j=1}^{n}g^{M}(J(e_{\nu}), (\nabla^{L}_{e_{j}}(\nabla^{L}_{e_{\nu}}(J)))e_{j}-(\nabla^{L}_{\nabla^{L}_{e_{j}}e_{\nu}}(J))e_{j}){\rm tr}[\texttt{id}],\nonumber
\end{align}
\begin{align}
&\sum_{\alpha,\nu,j=1}^{n}{\rm tr}[c[J(e_{\alpha})]c[(\nabla^{L}_{e_{\alpha}}J)e_{j}]c[J(e_{\nu})]c[(\nabla^{L}_{e_{\nu}}J)e_{j}]]\\
&=\sum_{\alpha,\nu,j=1}^{n}g^{M}(J(e_{\alpha}), (\nabla^{L}_{e_{\nu}}J)e_{j})g^{M}((\nabla^{L}_{e_{\alpha}}J)e_{j}, J(e_{\nu})){\rm tr}[\texttt{id}]\nonumber\\
&-\sum_{\nu,j=1}^{n}g^{M}((\nabla^{L}_{e_{\nu}}J)e_{j}, (\nabla^{L}_{e_{\nu}}J)e_{j}){\rm tr}[\texttt{id}]\nonumber\\
&+\sum_{\alpha,\nu,j=1}^{n}g^{M}(J(e_{\alpha}), (\nabla^{L}_{e_{\alpha}}J)e_{j})g^{M}(J(e_{\nu}), (\nabla^{L}_{e_{\nu}}J)e_{j}){\rm tr}[\texttt{id}].\nonumber
\end{align}

By applying the formulae shown in (2.20)-(2.26), we get:
\begin{thm} If $M$ is a $n$-dimensional almost product Riemannian spin manifold without boundary, we have the following:
\begin{align}
{\rm Wres}({D_{J}}^{-n+2})
=\frac{(n-2)(4\pi)^{\frac{n}{2}}}{(\frac{n}{2}-1)!}\int_{M}2^{\frac{n}{2}}\Big(&\frac{1}{4}\sum_{i,j=1}^{n}R(J(e_{i}), J(e_{j}), e_{j}, e_{i})
-\frac{1}{2}\sum_{\nu,j=1}^{n}g^{M}(\nabla_{e_{j}}^{L}(J)e_{\nu}, (\nabla^{L}_{e_{\nu}}J)e_{j})\\
&-\frac{1}{2}\sum_{\nu,j=1}^{n}g^{M}(J(e_{\nu}), (\nabla^{L}_{e_{j}}(\nabla^{L}_{e_{\nu}}(J)))e_{j}-(\nabla^{L}_{\nabla^{L}_{e_{j}}e_{\nu}}(J))e_{j})\nonumber\\
&-\frac{1}{4}\sum_{\alpha,\nu,j=1}^{n}g^{M}(J(e_{\alpha}), (\nabla^{L}_{e_{\nu}}J)e_{j})g^{M}((\nabla^{L}_{e_{\alpha}}J)e_{j}, J(e_{\nu}))\nonumber\\
&-\frac{1}{4}\sum_{\alpha,\nu,j=1}^{n}g^{M}(J(e_{\alpha}), (\nabla^{L}_{e_{\alpha}}J)e_{j})g^{M}(J(e_{\nu}), (\nabla^{L}_{e_{\nu}}J)e_{j})\nonumber\\
&+\frac{1}{4}\sum_{\nu,j=1}^{n}g^{M}((\nabla^{L}_{e_{\nu}}J)e_{j}, (\nabla^{L}_{e_{\nu}}J)e_{j}))-\frac{1}{12}s\Big)d{\rm Vol_{M} },\nonumber
\end{align}
where $s$ is the scalar curvature.
\end{thm}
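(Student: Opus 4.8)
The plan is to treat the statement as a pointwise trace computation resting on the two inputs already assembled in this section: the heat-kernel identity (2.20), which writes ${\rm Wres}({D_{J}}^{-n+2})$ as $\frac{(n-2)(4\pi)^{\frac{n}{2}}}{(\frac{n}{2}-1)!}\int_{M}{\rm tr}(\frac{1}{6}s+E_{{D_{J}}^{2}})\,d{\rm Vol}_{M}$, together with the explicit expression (2.17) for $E_{{D_{J}}^{2}}(x_{0})$ in normal coordinates. Because both sides are globally defined scalars, it is enough to evaluate the integrand at an arbitrary point $x_{0}$ in normal coordinates; the whole problem therefore reduces to computing the fibrewise trace of $\frac{1}{6}s+E_{{D_{J}}^{2}}(x_{0})$ over the spinor bundle.

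First I would substitute (2.17) into the integrand, which exhibits $E_{{D_{J}}^{2}}(x_{0})$ as a sum of five pieces: the Riemann curvature term quartic in $c(\cdot)$, two terms quadratic in $c(\cdot)$ built from the first and second covariant derivatives of $J$, a fourth term quartic in $c(\cdot)$ assembled from two factors of $\nabla^{L}J$, and the scalar term $-\frac{1}{4}s$. I would then apply the Clifford trace identities (2.21) and (2.22) to each piece separately. The two quadratic terms collapse at once via (2.21) into the metric pairings recorded in (2.24) and (2.25); the curvature term reduces via (2.22) to $2\sum_{i,j}R(J(e_{i}),J(e_{j}),e_{j},e_{i})$ as in (2.23), once one notes that the symmetric contraction $g^{M}(e_{i},e_{j})g^{M}(e_{k},e_{l})$ annihilates the curvature factor by antisymmetry while the two antisymmetric contractions coincide. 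In each case the identity on the spinor fibre factors out as the common scalar ${\rm tr}[{\rm id}]=2^{\frac{n}{2}}$, which is precisely the prefactor displayed outside the parentheses in the statement.

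The delicate step is the fourth, quartic $J$-term, whose trace (2.26) spawns three distinct contractions from the single four-Clifford identity (2.22): the three summands of (2.22) pair the four vectors $J(e_{\alpha})$, $(\nabla^{L}_{e_{\alpha}}J)e_{j}$, $J(e_{\nu})$, $(\nabla^{L}_{e_{\nu}}J)e_{j}$ in the three admissible ways, and the middle summand enters with the opposite sign. Tracking this sign is where I expect the main obstacle: after the $-\frac{1}{4}$ prefactor the middle contraction contributes the term $+\frac{1}{4}\sum g^{M}((\nabla^{L}_{e_{\nu}}J)e_{j},(\nabla^{L}_{e_{\nu}}J)e_{j})$ whose sign is flipped relative to the other two, and one must be careful to keep the repeated index $j$ paired correctly inside both metric factors of each cross-contraction. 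I would verify this contraction pattern explicitly before assembling the terms.

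Finally I would collect the five traced pieces and pull the common factor ${\rm tr}[{\rm id}]=2^{\frac{n}{2}}$ outside the parentheses. The scalar contributions merge as $\frac{1}{6}s-\frac{1}{4}s=-\frac{1}{12}s$, matching the last term of the statement; the coefficient $\frac{1}{8}$ on the curvature term against the factor $2$ in (2.23) produces $\frac{1}{4}\sum R(J(e_{i}),J(e_{j}),e_{j},e_{i})$; the $-\frac{1}{2}$ coefficients on the two quadratic terms pass through unchanged; and the three pieces coming from (2.26) supply the remaining two $-\frac{1}{4}$ terms and the $+\frac{1}{4}$ term. Inserting this traced integrand back into (2.20) would reproduce (2.27) verbatim, completing the proof.
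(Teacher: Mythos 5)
Your proposal is correct and is essentially the paper's own proof: both start from the Ackermann identity (2.20) together with the normal-coordinate expression (2.17) for $E_{{D_{J}}^{2}}$, apply the Clifford trace identities (2.21)--(2.22) term by term to reproduce (2.23)--(2.26) (including the sign of the middle contraction in the quartic $J$-term, which yields the $+\frac{1}{4}\sum g^{M}((\nabla^{L}_{e_{\nu}}J)e_{j},(\nabla^{L}_{e_{\nu}}J)e_{j})$ contribution), and then collect coefficients with ${\rm tr}[\texttt{id}]=2^{\frac{n}{2}}$ and $\frac{1}{6}s-\frac{1}{4}s=-\frac{1}{12}s$. No gaps.
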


\section{ A Kastler-Kalau-Walze type theorem for $4$-dimensional manifolds with boundary}
\indent Firstly, we explain the basic notions of  Boutet de Monvel's calculus and the definition of the noncommutative residue for manifolds with boundary that will be used throughout the paper. For the details, see Ref.\cite{Wa3}.\\
\indent Let $U\subset M$ be a collar neighborhood of $\partial M$ which is diffeomorphic with $\partial M\times [0,1)$. By the definition of $h(x_n)\in C^{\infty}([0,1))$
and $h(x_n)>0$, there exists $\widehat{h}\in C^{\infty}((-\varepsilon,1))$ such that $\widehat{h}|_{[0,1)}=h$ and $\widehat{h}>0$ for some
sufficiently small $\varepsilon>0$. Then there exists a metric $g'$ on $\widetilde{M}=M\bigcup_{\partial M}\partial M\times
(-\varepsilon,0]$ which has the form on $U\bigcup_{\partial M}\partial M\times (-\varepsilon,0 ]$
\begin{equation}
g'=\frac{1}{\widehat{h}(x_{n})}g^{\partial M}+dx _{n}^{2} ,
\end{equation}
such that $g'|_{M}=g$. We fix a metric $g'$ on the $\widetilde{M}$ such that $g'|_{M}=g$.

We define the Fourier transformation $F'$  by
\begin{equation}
F':L^2({\bf R}_t)\rightarrow L^2({\bf R}_v);~F'(u)(v)=\int e^{-ivt}u(t)dt\\
\end{equation}
and let
\begin{equation}
r^{+}:C^\infty ({\bf R})\rightarrow C^\infty (\widetilde{{\bf R}^+});~ f\rightarrow f|\widetilde{{\bf R}^+};~
\widetilde{{\bf R}^+}=\{x\geq0;x\in {\bf R}\}.
\end{equation}
 where $\Phi({\bf R})$
denotes the Schwartz space and $\Phi(\widetilde{{\bf R}^+}) =r^+\Phi({\bf R})$, $\Phi(\widetilde{{\bf R}^-}) =r^-\Phi({\bf R})$. We define $H^+=F'(\Phi(\widetilde{{\bf R}^+}));~ H^-_0=F'(\Phi(\widetilde{{\bf R}^-}))$ which satisfies
$H^+\bot H^-_0$. We have the following
 property: $h\in H^+~(H^-_0)$ if and only if $h\in C^\infty({\bf R})$ which has an analytic extension to the lower (upper) complex
half-plane $\{{\rm Im}\xi<0\}~(\{{\rm Im}\xi>0\})$ such that for all nonnegative integer $l$,
 \begin{equation}
\frac{d^{l}h}{d\xi^l}(\xi)\sim\sum^{\infty}_{k=1}\frac{d^l}{d\xi^l}(\frac{c_k}{\xi^k}),
\end{equation}
as $|\xi|\rightarrow +\infty,{\rm Im}\xi\leq0~({\rm Im}\xi\geq0)$.\\
\indent Let $H'$ be the space of all polynomials and $H^-=H^-_0\bigoplus H';~H=H^+\bigoplus H^-.$ Denote by $\pi^+~(\pi^-)$ respectively the projection on $H^+~(H^-)$. For calculations, we take $H=\widetilde H=\{$rational functions having no poles on the real axis$\}$ ($\tilde{H}$ is a dense set in the topology of $H$). Then on $\tilde{H}$,
 \begin{equation}
\pi^+h(\xi_0)=\frac{1}{2\pi i}\lim_{u\rightarrow 0^{-}}\int_{\Gamma^+}\frac{h(\xi)}{\xi_0+iu-\xi}d\xi,
\end{equation}
where $\Gamma^+$ is a Jordan close curve
included ${\rm Im}(\xi)>0$ surrounding all the singularities of $h$ in the upper half-plane and
$\xi_0\in {\bf R}$. Similarly, define $\pi'$ on $\tilde{H}$,
\begin{equation}
\pi'h=\frac{1}{2\pi}\int_{\Gamma^+}h(\xi)d\xi.
\end{equation}
So, $\pi'(H^-)=0$. For $h\in H\bigcap L^1({\bf R})$, $\pi'h=\frac{1}{2\pi}\int_{{\bf R}}h(v)dv$ and for $h\in H^+\bigcap L^1({\bf R})$, $\pi'h=0$.

Let $M$ be a $n$-dimensional compact oriented manifold with boundary $\partial M$.
Denote by $\mathcal{B}$ Boutet de Monvel's algebra, we recall the main theorem in \cite{Wa3,FGLS}.
\begin{thm}\label{th:32}{\rm\cite{FGLS}}{\bf(Fedosov-Golse-Leichtnam-Schrohe)}
 Let $X$ and $\partial X$ be connected, ${\rm dim}X=n\geq3$,
 $A=\left(\begin{array}{lcr}\pi^+P+G &   K \\
T &  S    \end{array}\right)$ $\in \mathcal{B}$ , and denote by $p$, $b$ and $s$ the local symbols of $P,G$ and $S$ respectively.
 Define:
 \begin{align}
{\rm{\widetilde{Wres}}}(A)&=\int_X\int_{\bf S}{\rm{tr}}_E\left[p_{-n}(x,\xi)\right]\sigma(\xi)dx \\
&+2\pi\int_ {\partial X}\int_{\bf S'}\left\{{\rm tr}_E\left[({\rm{tr}}b_{-n})(x',\xi')\right]+{\rm{tr}}
_F\left[s_{1-n}(x',\xi')\right]\right\}\sigma(\xi')dx'.\nonumber
\end{align}
Then~~ a) ${\rm \widetilde{Wres}}([A,B])=0 $, for any
$A,B\in\mathcal{B}$;~~ b) It is a unique continuous trace on
$\mathcal{B}/\mathcal{B}^{-\infty}$.
\end{thm}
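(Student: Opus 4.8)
The plan is to argue entirely within Boutet de Monvel's symbolic calculus, so I would first unpack the symbol data of an element $A=\left(\begin{array}{lcr}\pi^+P+G &   K \\ T &  S    \end{array}\right)\in\mathcal{B}$: the classical interior symbol $p$ of the truncated pseudodifferential part, the operator-valued boundary symbols of the singular Green operator $G$, of the potential operator $K$ and of the trace operator $T$, and the classical symbol $s$ of the boundary pseudodifferential operator $S$. The functional $\widetilde{{\rm Wres}}(A)$ is the sum of the interior residue density ${\rm tr}_E[p_{-n}(x,\xi)]\sigma(\xi)dx$ and the boundary density assembled from the fiber trace of $b_{-n}$ and from $s_{1-n}$. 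With this in place the theorem splits into two independent tasks: (a) the vanishing $\widetilde{{\rm Wres}}([A,B])=0$, which is equivalent to the symmetry of the bilinear form $(A,B)\mapsto\widetilde{{\rm Wres}}(AB)$; and (b) the uniqueness, which amounts to showing that $\widetilde{{\rm Wres}}$ spans the space of continuous traces.

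For part (a) I would separate the interior and boundary contributions. The interior piece is handled by the classical Wodzicki--Guillemin mechanism: by the composition formula the $(-n)$-homogeneous component of the symbol of a commutator of scalar pseudodifferential operators is a finite sum of $\partial_{\xi_j}$- and $\partial_{x_j}$-derivatives of homogeneous symbols. Integrating over each cosphere fiber ${\bf S}$ annihilates the $\partial_{\xi_j}$-terms by Stokes' theorem. The subtlety specific to the manifold-with-boundary setting is that the $\partial_{x_j}$-terms no longer integrate to zero over $X$: the normal derivative $\partial_{x_n}$ produces, via the divergence theorem, a residual integral over $\partial X$. The key point is that this leftover must cancel exactly against the boundary part of $\widetilde{{\rm Wres}}([A,B])$ coming from $b_{-n}$ and $s_{1-n}$. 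Establishing that cancellation is where the real work lies: one must expand the operator-valued composition of the $G,K,T,S$ entries, use the transmission property, the half-line Fourier analysis encoded by the projections $\pi^\pm$ and $\pi'$, and integration by parts in the normal variable on $\widetilde{{\bf R}^+}$, and then check that the resulting $(-n)$- and $(1-n)$-homogeneous boundary densities precisely absorb the normal divergence left over from the interior. I expect this matching of the interior normal-boundary term with the Green and boundary-operator densities to be the main obstacle.

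For part (b) the strategy is to compute the abelianization: I would show that the quotient of $\mathcal{B}/\mathcal{B}^{-\infty}$ by the closure of its commutator subspace is one-dimensional. Every continuous trace then factors through this one-dimensional quotient and is therefore a scalar multiple of $\widetilde{{\rm Wres}}$. Concretely, I would produce, degree by degree and symbol-type by symbol-type, enough explicit commutators to rewrite an arbitrary symbol, modulo $[\mathcal{B},\mathcal{B}]$ and smoothing operators, as a multiple of a fixed model whose residue is normalized to one; the only homogeneity data that survive this reduction are precisely the interior $(-n)$-piece and its matching boundary pieces, which is exactly why $\widetilde{{\rm Wres}}$ is nonzero and why it is unique. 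Finally, the normalization constant---in particular the factor $2\pi$ weighting the boundary integral relative to the interior integral---would be pinned down from the one-dimensional normal Fourier analysis, i.e. from the identity $\pi'h=\frac{1}{2\pi}\int_{{\bf R}}h(v)dv$ recorded above, which fixes the relative weight between the bulk density $\sigma(\xi)dx$ and the boundary density $\sigma(\xi')dx'$.
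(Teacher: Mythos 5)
The paper does not prove this statement at all: it is Theorem 3.1 quoted verbatim from the reference \cite{FGLS} (Fedosov--Golse--Leichtnam--Schrohe) and used as a black box, so there is no in-paper argument to compare yours against. Judged on its own terms, your proposal correctly identifies the two-part structure of the actual FGLS argument (tracial property by showing commutators have vanishing residue; uniqueness by showing the abelianization of $\mathcal{B}/\mathcal{B}^{-\infty}$ is one-dimensional) and correctly locates the novelty relative to the closed-manifold Wodzicki--Guillemin case in the failure of the $\partial_{x_n}$-terms to integrate away. But as a proof it has a genuine gap precisely where you say ``the real work lies'': you never carry out the cancellation. Concretely, the crux is that the composition of two truncated operators is not truncation of the composition, $\pi^+P\circ\pi^+Q=\pi^+(PQ)+L(P,Q)$ with $L(P,Q)$ a singular Green operator (the ``leftover term''), and the heart of the FGLS proof is the explicit computation, via the $\pi^\pm$ half-line Fourier calculus, showing that the boundary residue of $L(P,Q)-L(Q,P)$ plus the residues of the commutators involving $G,K,T,S$ exactly absorb the $\partial X$-integral produced by applying the divergence theorem to the $\partial_{x_n}$-part of $\sigma_{-n}([P,Q])$, with the factor $2\pi$ emerging from $\pi'h=\frac{1}{2\pi}\int_{\bf R}h(v)\,dv$. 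Your sketch names this step but does not perform it, and it is not a routine verification.

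The uniqueness half is likewise only a plan: asserting that one can ``produce, degree by degree, enough explicit commutators'' to reduce any symbol to a normalized model is exactly the content of the theorem, and the construction of those commutators (separately for the interior symbol, for the singular Green part, and for the boundary operator $S$, while respecting the transmission condition) is the substance of the proof, not a corollary of the setup. So the proposal is a correct roadmap of the published proof rather than a proof; to close the gap you would either need to execute the two computations above or, as the paper does, simply cite \cite{FGLS}.
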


\begin{defn}{\rm\cite{Wa3} }
Lower dimensional volumes of spin manifolds with boundary are defined by
 \begin{equation}
{\rm Vol}^{(p_1,p_2)}_nM:= \widetilde{{\rm Wres}}[\pi^+D^{-p_1}\circ\pi^+D^{-p_2}].
\end{equation}
\end{defn}
We can get the spin structure on $\widetilde{M}$ by extending the spin structure on $M.$ Let $D$ be the Dirac operator associated to $g'$ on the spinors bundle $S(T\widetilde{M}).$ By \cite{Wa3}, we get
\begin{align}
\widetilde{{\rm Wres}}[\pi^+D^{-p_1}\circ\pi^+D^{-p_2}]=\int_M\int_{|\xi|=1}{\rm
trace}_{S(TM)}[\sigma_{-n}(D^{-p_1-p_2})]\sigma(\xi)dx+\int_{\partial M}\Phi
\end{align}
and
\begin{align}
\Phi&=\int_{|\xi'|=1}\int^{+\infty}_{-\infty}\sum^{\infty}_{j, k=0}\sum\frac{(-i)^{|\alpha|+j+k+1}}{\alpha!(j+k+1)!}
\times {\rm trace}_{S(TM)}[\partial^j_{x_n}\partial^\alpha_{\xi'}\partial^k_{\xi_n}\sigma^+_{r}(D^{-p_1})(x',0,\xi',\xi_n)
\\
&\times\partial^\alpha_{x'}\partial^{j+1}_{\xi_n}\partial^k_{x_n}\sigma_{l}(D^{-p_2})(x',0,\xi',\xi_n)]d\xi_n\sigma(\xi')dx',\nonumber
\end{align}
 where the sum is taken over $r+l-k-|\alpha|-j-1=-n,~~r\leq -p_1,l\leq -p_2$.

 Since $[\sigma_{-n}(D^{-p_1-p_2})]|_M$ has the same expression as $\sigma_{-n}(D^{-p_1-p_2})$ in the case of manifolds without
boundary, so locally we can compute the first term by \cite{Ka}, \cite{KW}, \cite{Wa3}, \cite{Po}.

For any fixed point $x_0\in\partial M$, we choose the normal coordinates
$U$ of $x_0$ in $\partial M$ (not in $M$) and compute $\Phi(x_0)$ in the coordinates $\widetilde{U}=U\times [0,1)\subset M$ and the
metric $\frac{1}{h(x_n)}g^{\partial M}+dx_n^2.$ The dual metric of $g^M$ on $\widetilde{U}$ is ${h(x_n)}g^{\partial M}+dx_n^2.$  Write
$g^M_{ij}=g^M(\frac{\partial}{\partial x_i},\frac{\partial}{\partial x_j});~ g_M^{ij}=g^M(dx_i,dx_j)$, then

\begin{equation}
[g^M_{i,j}]= \left[\begin{array}{lcr}
  \frac{1}{h(x_n)}[g_{i,j}^{\partial M}]  & 0  \\
   0  &  1
\end{array}\right];~~~
[g_M^{i,j}]= \left[\begin{array}{lcr}
  h(x_n)[g^{i,j}_{\partial M}]  & 0  \\
   0  &  1
\end{array}\right]
\end{equation}
and
\begin{equation}
\partial_{x_s}g_{ij}^{\partial M}(x_0)=0, 1\leq i,j\leq n-1; ~~~g_{ij}^M(x_0)=\delta_{ij}.
\end{equation}
\indent $\{e_1, \cdots, e_n\}$ be an orthonormal frame field in $U$ about $g^{\partial M}$ which is parallel along geodesics and $e_i(x_0)=\frac{\partial}{\partial{x_i}}(x_0).$ We review the following three lemmas.
\begin{lem}{\rm \cite{Wa3}}\label{le:32}
With the metric $g^{M}$ on $M$ near the boundary
\begin{eqnarray}
\partial_{x_j}(|\xi|_{g^M}^2)(x_0)&=&\left\{
       \begin{array}{c}
        0,  ~~~~~~~~~~ ~~~~~~~~~~ ~~~~~~~~~~~~~{\rm if }~j<n, \\[2pt]
       h'(0)|\xi'|^{2}_{g^{\partial M}},~~~~~~~~~~~~~~~~~~~~{\rm if }~j=n;
       \end{array}
    \right. \\
\partial_{x_j}[c(\xi)](x_0)&=&\left\{
       \begin{array}{c}
      0,  ~~~~~~~~~~ ~~~~~~~~~~ ~~~~~~~~~~~~~{\rm if }~j<n,\\[2pt]
\partial x_{n}(c(\xi'))(x_{0}), ~~~~~~~~~~~~~~~~~{\rm if }~j=n,
       \end{array}
    \right.
\end{eqnarray}
where $\xi=\xi'+\xi_{n}dx_{n}$.
\end{lem}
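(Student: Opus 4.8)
The plan is to reduce both identities to the explicit product structure of the metric near $\partial M$ recorded in (3.11)--(3.12), together with the facts that the tangential frame $\{e_1,\dots,e_{n-1}\}$ is parallel along the geodesics normal to the boundary and that $e_n=dx_n$ is the unit normal. Throughout, $\xi=\xi'+\xi_n dx_n$ is regarded as a fixed cotangent symbol, so the components $\xi_j$ are constant with respect to $x$.

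First I would treat $|\xi|^2_{g^M}$. By the dual metric displayed in (3.11),
\[
|\xi|^2_{g^M}=\sum_{i,j}g_M^{ij}\xi_i\xi_j=h(x_n)\sum_{i,j<n}g^{ij}_{\partial M}\xi_i\xi_j+\xi_n^2=h(x_n)|\xi'|^2_{g^{\partial M}}+\xi_n^2.
\]
For $j<n$ one differentiates to get $\partial_{x_j}(|\xi|^2_{g^M})=h(x_n)\,\partial_{x_j}(|\xi'|^2_{g^{\partial M}})$; since the tangential derivatives of $g^{\partial M}_{ij}$ vanish at $x_0$ by (3.12), the same holds for the inverse components $g^{ij}_{\partial M}$, and hence this derivative vanishes at $x_0$. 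For $j=n$, the boundary metric $g^{\partial M}$ is pulled back from $\partial M$ and therefore does not depend on $x_n$, so only the factor $h(x_n)$ contributes, giving $\partial_{x_n}(|\xi|^2_{g^M})(x_0)=h'(0)|\xi'|^2_{g^{\partial M}}$, as claimed.

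Next I would handle $c(\xi)$. By $\mathbb{R}$-linearity of the Clifford action, $c(\xi)=c(\xi')+\xi_n c(dx_n)$. In the frame trivialization the matrices $c(e_k)$ are constant, and each $c(dx_i)=\sum_k a_{ik}(x)c(e_k)$ for smooth transition functions $a_{ik}$ determined by the metric; since $g_M^{nn}=1$ with vanishing off-diagonal components, $dx_n$ is dual to the unit normal $e_n$, so $c(dx_n)=c(e_n)$ is constant in $x$. For $j<n$ the tangential derivatives of the $a_{ik}$ vanish at $x_0$, because the tangential frame is parallel along the normal geodesics and normal coordinates are used on $\partial M$, whence $\partial_{x_j}[c(\xi)](x_0)=0$. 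For $j=n$, the constancy of $c(dx_n)$ gives $\partial_{x_n}[c(\xi)]=\partial_{x_n}[c(\xi')]$, so $\partial_{x_n}[c(\xi)](x_0)=\partial_{x_n}(c(\xi'))(x_0)$.

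The main obstacle I anticipate is the bookkeeping of which derivatives of the frame-transition coefficients survive at $x_0$ in the Clifford computation. The key is to separate the two sources of $x$-dependence of $c(\xi)$: the $\frac{1}{h(x_n)}$-scaling of the tangential metric, whose tangential derivative vanishes at $x_0$ by the normalization (3.12) and whose $x_n$-derivative is already absorbed into the term $c(\xi')$, and the normal covector $dx_n$, which is metrically frozen along the collar so that its Clifford action is constant. Once these two facts are isolated, both cases of the lemma follow by direct differentiation.
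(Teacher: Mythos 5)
Your proof is correct and follows the standard route: the paper itself states this lemma without proof, quoting it from \cite{Wa3}, and the argument there is exactly the one you give --- reading off $|\xi|^{2}_{g^{M}}=h(x_{n})|\xi'|^{2}_{g^{\partial M}}+\xi_{n}^{2}$ from the block form (3.11), using the normal-coordinate normalization (3.12) (hence vanishing tangential derivatives of $g^{ij}_{\partial M}$ and of the frame-transition coefficients, since the frame is parallel along geodesics in $\partial M$), and noting that $c(dx_{n})=c(e_{n})$ is constant along the collar so only $c(\xi')$ contributes to the $x_{n}$-derivative. I see no gaps.
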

\begin{lem}{\rm \cite{Wa3}}\label{le:32}With the metric $g^{M}$ on $M$ near the boundary
\begin{align}
\omega_{s,t}(e_i)(x_0)&=\left\{
       \begin{array}{c}
        \omega_{n,i}(e_i)(x_0)=\frac{1}{2}h'(0),  ~~~~~~~~~~ ~~~~~~~~~~~{\rm if }~s=n,t=i,i<n; \\[2pt]
       \omega_{i,n}(e_i)(x_0)=-\frac{1}{2}h'(0),~~~~~~~~~~~~~~~~~~~{\rm if }~s=i,t=n,i<n;\\[2pt]
    \omega_{s,t}(e_i)(x_0)=0,~~~~~~~~~~~~~~~~~~~~~~~~~~~other~cases,~~~~~~~~~\\[2pt]
       \end{array}
    \right.
\end{align}
where $(\omega_{s,t})$ denotes the connection matrix of Levi-Civita connection $\nabla^L$.
\end{lem}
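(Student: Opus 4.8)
The plan is to verify the table by computing each connection coefficient $\omega_{s,t}(e_i)(x_0)=\langle\nabla^L_{e_i}e_t,e_s\rangle(x_0)$ directly from the product form $g^M=\frac{1}{h(x_n)}g^{\partial M}+dx_n^2$ of the metric on the collar, exploiting three structural features: the skew-symmetry of $\omega$ in an orthonormal frame, the synchronous (parallel-along-geodesics) nature of the tangential frame at $x_0$, and the fact that the only surviving first metric derivatives at $x_0$ are the normal ones coming from $h(x_n)$. First I would fix the frame explicitly. I set $e_n=\partial_{x_n}$, which is already $g^M$-unit and normal to the hypersurfaces $\{x_n=\mathrm{const}\}$, and for $i<n$ I take $e_i=\sqrt{h(x_n)}\,f_i$, where $\{f_1,\dots,f_{n-1}\}$ is a $g^{\partial M}$-orthonormal tangential frame, parallel along geodesics in $\partial M$ and equal to $\partial_{x_i}$ at $x_0$. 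A one-line check gives $g^M(e_i,e_j)=h\cdot\frac1h g^{\partial M}(f_i,f_j)=\delta_{ij}$, so $\{e_1,\dots,e_n\}$ is $g^M$-orthonormal; the crucial point is that each $e_i$ with $i<n$ has \emph{vanishing} $\partial_{x_n}$-component. Writing $e_i=\sum_b E_i^b\partial_{x_b}$ this says $E_i^n\equiv 0$, while $E_i^b(x_0)=\delta_i^b$ by (3.13) together with the normalization $h(0)=1$ forced by $g^M_{ij}(x_0)=\delta_{ij}$ and $g^{\partial M}_{ij}(x_0)=\delta_{ij}$.

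Next I would reduce the number of cases. Because the frame is $g^M$-orthonormal, metric compatibility of $\nabla^L$ yields $\omega_{s,t}=-\omega_{t,s}$; in particular $\omega_{n,n}(e_i)=0$ and the case $s=i,\,t=n$ is the negative of the case $s=n,\,t=i$, so it suffices to evaluate $\omega_{n,t}(e_i)(x_0)$ and the purely tangential entries. Expanding $\nabla^L_{e_i}e_t$ in the coordinate frame and contracting against $e_s(x_0)=\partial_{x_s}(x_0)$ gives $\omega_{s,t}(e_i)(x_0)=e_i(E_t^s)(x_0)+\Gamma^s_{it}(x_0)$, where the $\Gamma$ are the Christoffel symbols of $g^M$. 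The computation then splits according to whether the indices are tangential or normal.

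I would then compute the Christoffel symbols at $x_0$. Since $\partial_{x_s}g^{\partial M}_{ij}(x_0)=0$ for $s<n$ by (3.13), while $\partial_{x_n}(1/h)=-h'/h^2$ is nonzero, the only surviving symbols at $x_0$ are the normal ones: a short calculation gives $\Gamma^n_{ij}(x_0)=\tfrac12 h'(0)\delta_{ij}$ and $\Gamma^i_{nj}(x_0)=-\tfrac12 h'(0)\delta_{ij}$ (using $h(0)=1$), whereas every purely tangential symbol $\Gamma^k_{ij}(x_0)$ with $i,j,k<n$ vanishes. For the frame-derivative terms, $e_i(E_t^n)(x_0)=0$ because $E_t^n\equiv 0$, and $e_i(E_t^j)(x_0)=h(0)\,f_i(F_t^j)(x_0)=0$ for $j<n$ because $\{f_i\}$ is parallel along geodesics, so its tangential connection coefficients vanish at $x_0$. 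Assembling these, $\omega_{n,i}(e_i)(x_0)=e_i(E_i^n)(x_0)+\Gamma^n_{ii}(x_0)=\tfrac12 h'(0)$, the case $s=i,\,t=n$ follows by skew-symmetry as $-\tfrac12 h'(0)$, and all remaining entries vanish, which is exactly the asserted table.

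The step I expect to be the main obstacle is the bookkeeping of the rescaling $e_i=\sqrt{h}\,f_i$: one must check that the $x_n$-derivative of the conformal factor $\sqrt{h}$ enters the nonzero entry only through $\Gamma^n_{ii}$ and never contributes spuriously to the tangential–tangential coefficients, which relies precisely on $E_i^n\equiv 0$ together with the synchronous choice of $\{f_i\}$. A secondary point worth stating cleanly is the normalization $h(0)=1$, so that the surviving prefactor is exactly $\tfrac12 h'(0)$ rather than $\tfrac12 h'(0)/h(0)$.
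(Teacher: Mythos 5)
Your computation is correct: the frame $e_n=\partial_{x_n}$, $e_i=\sqrt{h}\,f_i$ with $h(0)=1$, the identity $\omega_{s,t}(e_i)(x_0)=e_i(E^s_t)(x_0)+\Gamma^s_{it}(x_0)$, and the Christoffel values $\Gamma^n_{ii}(x_0)=\tfrac12h'(0)$, $\Gamma^i_{ni}(x_0)=-\tfrac12h'(0)$ reproduce exactly the stated table, and your handling of the conformal factor and the synchronous tangential frame is sound. The paper itself gives no proof of this lemma, only a citation to \cite{Wa3}; your argument is essentially the same direct computation carried out in that reference, so nothing further is needed.
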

\begin{lem}{\rm \cite{Wa3}}
\begin{align}
\Gamma_{st}^k(x_0)&=\left\{
       \begin{array}{c}
        \Gamma^n_{ii}(x_0)=\frac{1}{2}h'(0),~~~~~~~~~~ ~~~~~~~~~~~{\rm if }~s=t=i,k=n,i<n; \\[2pt]
        \Gamma^i_{ni}(x_0)=-\frac{1}{2}h'(0),~~~~~~~~~~~~~~~~~~~{\rm if }~s=n,t=i,k=i,i<n;\\[2pt]
        \Gamma^i_{in}(x_0)=-\frac{1}{2}h'(0),~~~~~~~~~~~~~~~~~~~{\rm if }~s=i,t=n,k=i,i<n,\\[2pt]
        \Gamma_{st}^i(x_0)=0,~~~~~~~~~~~~~~~~~~~~~~~~~~~other~cases.~~~~~~~~~
       \end{array}
    \right.
\end{align}
\end{lem}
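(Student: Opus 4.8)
The plan is to read off all the Christoffel coefficients at $x_{0}$ directly from the Koszul formula
\[
\Gamma^{k}_{st}=\frac{1}{2}\sum_{l=1}^{n}g_{M}^{kl}\big(\partial_{s}g^{M}_{tl}+\partial_{t}g^{M}_{sl}-\partial_{l}g^{M}_{st}\big),
\]
using the explicit boundary metric recorded in (3.11) and the normalization (3.12). The key structural point is that, with respect to the frame $\{e_{1},\dots,e_{n}\}$, the metric (3.11) is block diagonal: the tangential block is $g^{M}_{ij}=\frac{1}{h(x_{n})}g^{\partial M}_{ij}$ for $i,j<n$, the normal component is $g^{M}_{nn}\equiv1$, and the mixed components $g^{M}_{in}=g^{M}_{ni}\equiv0$ vanish identically. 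Consequently all first derivatives of $g^{M}_{nn}$ and of the mixed block are zero, so only derivatives of the tangential block can survive.

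First I would isolate the unique nonvanishing derivative. The pulled-back boundary metric $g^{\partial M}$ does not depend on $x_{n}$, and by (3.12) its tangential derivatives vanish at $x_{0}$; hence every tangential derivative of $g^{M}_{ij}=\frac{1}{h(x_{n})}g^{\partial M}_{ij}$ vanishes at $x_{0}$, and the only contribution arises from differentiating the conformal factor $\frac{1}{h(x_{n})}$ in the normal direction. Since the normalization $g^{M}_{ij}(x_{0})=\delta_{ij}$ in (3.12) together with $g^{\partial M}_{ij}(x_{0})=\delta_{ij}$ forces $h(0)=1$, this gives
\[
\partial_{x_{n}}g^{M}_{ij}(x_{0})=-\frac{h'(0)}{h(0)^{2}}\,\delta_{ij}=-h'(0)\,\delta_{ij},\qquad i,j<n,
\]
while every other first derivative of $g^{M}$ vanishes at $x_{0}$.

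Next I would substitute this into the Koszul formula with $g_{M}^{kl}(x_{0})=\delta^{kl}$. For $\Gamma^{n}_{ii}$ with $i<n$ only the $l=n$ summand contributes, and because $g^{M}_{in}\equiv0$ the terms $\partial_{i}g^{M}_{in}$ drop, leaving $\Gamma^{n}_{ii}(x_{0})=-\frac{1}{2}\partial_{x_{n}}g^{M}_{ii}(x_{0})=\frac{1}{2}h'(0)$. For $\Gamma^{i}_{ni}$, and by symmetry $\Gamma^{i}_{in}$, with $i<n$ only the $l=i$ summand survives, the two mixed-derivative terms $\partial_{i}g^{M}_{ni}-\partial_{i}g^{M}_{ni}$ cancel, and one is left with $\Gamma^{i}_{ni}(x_{0})=\frac{1}{2}\partial_{x_{n}}g^{M}_{ii}(x_{0})=-\frac{1}{2}h'(0)$. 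In every remaining index combination the Koszul expression forces either a derivative of the normal or mixed components (which vanish identically) or a purely tangential derivative of the tangential block (which vanishes at $x_{0}$ by normal coordinates on $\partial M$), giving $\Gamma^{k}_{st}(x_{0})=0$.

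Since the computation is an elementary substitution, there is no genuine obstacle; the only point demanding care is the bookkeeping that guarantees the mixed and normal components contribute nothing --- that is, verifying $\partial_{\ast}g^{M}_{in}\equiv0$ and $g^{M}_{nn}\equiv1$ so that all terms except $\partial_{x_{n}}g^{M}_{ii}$ genuinely drop out --- together with tracking the sign produced by differentiating $1/h$. These are precisely the inputs already used for $\partial_{x_{j}}(|\xi|^{2}_{g^{M}})$ in Lemma 3.3 and for the connection matrix $\omega_{s,t}$ in Lemma 3.4, so the outcome is fully consistent with those computations.
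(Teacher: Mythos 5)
Your computation is correct: the paper states this lemma without proof, simply quoting it from \cite{Wa3}, and your direct evaluation of $\Gamma^{k}_{st}=\frac{1}{2}g_{M}^{kl}(\partial_{s}g^{M}_{tl}+\partial_{t}g^{M}_{sl}-\partial_{l}g^{M}_{st})$ using the block-diagonal metric (3.11), the normalization (3.12), and the single surviving derivative $\partial_{x_{n}}g^{M}_{ij}(x_{0})=-h'(0)\delta_{ij}$ is exactly the standard verification behind that citation. The signs and the identification of which cases vanish all check out, and the result is consistent with Lemmas 3.3 and 3.4 as you note.
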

\indent Similar to (3.9) and (3.10), we firstly compute
\begin{equation}
\widetilde{{\rm Wres}}[\pi^+{{D}_{J}}^{-1}\circ\pi^+{{D}_{J}}^{-1}]=\int_M\int_{|\xi|=1}{\rm
trace}_{S(TM)}[\sigma_{-4}({{D}_{J}}^{-2})]\sigma(\xi)dx+\int_{\partial M}\Psi,
\end{equation}
where
\begin{align}
\Psi &=\int_{|\xi'|=1}\int^{+\infty}_{-\infty}\sum^{\infty}_{j, k=0}\sum\frac{(-i)^{|\alpha|+j+k+1}}{\alpha!(j+k+1)!}
\times {\rm trace}_{S(TM)}[\partial^j_{x_n}\partial^\alpha_{\xi'}\partial^k_{\xi_n}\sigma^+_{r}({{D}_{J}}^{-1})\\
&(x',0,\xi',\xi_n)\times\partial^\alpha_{x'}\partial^{j+1}_{\xi_n}\partial^k_{x_n}\sigma_{l}({{D}_{J}}^{-1})(x',0,\xi',\xi_n)]d\xi_n\sigma(\xi')dx',\nonumber
\end{align}
the sum is taken over $r+l-k-j-|\alpha|-1=-4, r\leq -1, l\leq-1$.\\

\indent
Then we give the interior of $\widetilde{{\rm Wres}}[\pi^+{{D}_{J}}^{-1}\circ\pi^+{{D}_{J}}^{-1}]$,
\begin{align}
&\int_M\int_{|\xi|=1}{\rm trace}_{S(TM)}[\sigma_{-4}({{D}_{J}}^{-2})]\sigma(\xi)dx=32\pi^{2}\\
&\int_{M}\Big(\sum_{i,j=1}^{n}R(J(e_{i}), J(e_{j}), e_{j}, e_{i})
-2\sum_{\nu,j=1}^{n}g^{M}(\nabla_{e_{j}}^{L}(J)e_{\nu}, (\nabla^{L}_{e_{\nu}}J)e_{j})\\
&-2\sum_{\nu,j=1}^{n}g^{M}(J(e_{\nu}), (\nabla^{L}_{e_{j}}(\nabla^{L}_{e_{\nu}}(J)))e_{j}-(\nabla^{L}_{\nabla^{L}_{e_{j}}e_{\nu}}(J))e_{j})\nonumber\\
&-\sum_{\alpha,\nu,j=1}^{n}g^{M}(J(e_{\alpha}), (\nabla^{L}_{e_{\nu}}J)e_{j})g^{M}((\nabla^{L}_{e_{\alpha}}J)e_{j}, J(e_{\nu}))\nonumber\\
&-\sum_{\alpha,\nu,j=1}^{n}g^{M}(J(e_{\alpha}), (\nabla^{L}_{e_{\alpha}}J)e_{j})g^{M}(J(e_{\nu}), (\nabla^{L}_{e_{\nu}}J)e_{j})\nonumber\\
&+\sum_{\nu,j=1}^{n}g^{M}((\nabla^{L}_{e_{\nu}}J)e_{j}, (\nabla^{L}_{e_{\nu}}J)e_{j}))-\frac{1}{3}s\Big)d{\rm Vol_{M} },\nonumber
\end{align}
where $\Omega_{n}=\frac{2\pi^\frac{n}{2}}{\Gamma(\frac{n}{2})}.$

\indent We begin by computing $\int_{\partial M} \Psi$. Since, some operators have the following symbols.
\begin{lem} The following identities hold:
\begin{align}
\sigma_1({D}_{J})&=ic[J(\xi)];\\
\sigma_0({D}_{J})&=
-\frac{1}{4}\sum_{i,j,k=1}^{n}\omega_{j,k}(e_i)c[J(e_i)]c(e_j)c(e_k).
\end{align}
\end{lem}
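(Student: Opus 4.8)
The plan is to read both symbols directly off the local expression of $D_J$ as a first-order differential operator, exactly as for the ordinary Dirac operator (cf. \cite{Wa3}); the only new feature is the insertion of $J$. Recall that the total symbol of a first-order operator is obtained by the substitution $\partial_j\mapsto \sqrt{-1}\,\xi_j$, that $\sigma_1$ collects the terms carrying a derivative, and that $\sigma_0$ collects the zeroth-order (spin-connection) terms.

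First I would expand $D_J$ in the natural frame. Writing $e_i=\sum_j e_i^j\partial_j$ and using $\nabla^S_{\partial_j}=\partial_j+\sigma_j$ with $\sigma_j=\frac14\sum_{k,l}\langle\nabla^L_{\partial_j}e_k,e_l\rangle c(e_k)c(e_l)$, one gets
\[
D_J=\sum_{i}c[J(e_i)]\nabla^S_{e_i}=\sum_{i,j}e_i^j\,c[J(e_i)]\,\partial_j+\sum_{i}c[J(e_i)]\,\sigma(e_i),
\]
where $\sigma(e_i)=\sum_j e_i^j\sigma_j=\frac14\sum_{j,k}\langle\nabla^L_{e_i}e_j,e_k\rangle c(e_j)c(e_k)$ by linearity of $\nabla^L$ in its direction argument. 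The first sum is the principal part and the second is the zeroth-order part.

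For $\sigma_1$ I replace $\partial_j$ by $\sqrt{-1}\,\xi_j$ in the principal part and use the linearity of $J$ and of the Clifford map:
\[
\sigma_1(D_J)=\sqrt{-1}\sum_{i,j}e_i^j\xi_j\,c[J(e_i)]=\sqrt{-1}\sum_i\langle e_i,\xi\rangle\,c[J(e_i)]=\sqrt{-1}\,c\Big[J\Big(\sum_i\langle e_i,\xi\rangle e_i\Big)\Big]=\sqrt{-1}\,c[J(\xi)],
\]
after identifying $\xi$ with its metric dual $\sum_i\langle e_i,\xi\rangle e_i$. For $\sigma_0$ I keep the zeroth-order term and substitute $\sigma(e_i)$:
\[
\sigma_0(D_J)=\sum_i c[J(e_i)]\,\sigma(e_i)=\frac14\sum_{i,j,k}\langle\nabla^L_{e_i}e_j,e_k\rangle\,c[J(e_i)]c(e_j)c(e_k).
\]

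The computation is routine; the one point needing care is the sign in $\sigma_0$. With the paper's index convention $\omega_{j,k}(e_i)=\langle\nabla^L_{e_i}e_k,e_j\rangle$ (consistent with the Lemmas on $\omega_{s,t}$ and $\Gamma^k_{st}$, e.g. $\omega_{n,i}(e_i)=\Gamma^n_{ii}$), the antisymmetry $\omega_{j,k}=-\omega_{k,j}$ together with the Clifford relation $c(e_j)c(e_k)=-c(e_k)c(e_j)$ for $j\ne k$ (the diagonal terms vanishing) turns $\frac14\sum\langle\nabla^L_{e_i}e_j,e_k\rangle c(e_j)c(e_k)$ into $-\frac14\sum_{j,k}\omega_{j,k}(e_i)c(e_j)c(e_k)$, which yields the stated $\sigma_0(D_J)=-\frac14\sum_{i,j,k}\omega_{j,k}(e_i)c[J(e_i)]c(e_j)c(e_k)$. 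No deeper obstacle arises, since the pseudodifferential calculus and the structure of $\nabla^S$ are already in place.
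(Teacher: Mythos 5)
Your proof is correct and follows exactly the route the paper intends: the paper states this lemma without proof as the routine symbol computation from the local expression $D_J=\sum_i c[J(e_i)]\nabla^S_{e_i}$ (as in \cite{Wa3} for $D$ itself), and your expansion into principal and zeroth-order parts, the substitution $\partial_j\mapsto \sqrt{-1}\,\xi_j$, and the sign bookkeeping via the antisymmetry of $\omega_{j,k}$ together with the Clifford relation are precisely that computation written out. The identification of the convention $\omega_{j,k}(e_i)=\langle\nabla^L_{e_i}e_k,e_j\rangle$ against the paper's Lemmas on $\omega_{s,t}$ and $\Gamma^k_{st}$ is the one point that needed checking, and you handled it correctly.
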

\indent Write
 \begin{eqnarray}
D_x^{\alpha}&=(-i)^{|\alpha|}\partial_x^{\alpha};
~\sigma({D}_{J})=p_1+p_0;
~\sigma({{D}_{J}}^{-1})=\sum^{\infty}_{j=1}q_{-j}.
\end{eqnarray}

\indent By the composition formula of pseudodifferential operators, we have
\begin{align}
1=\sigma({D}_{J}\circ {{D}_{J}}^{-1})&=\sum_{\alpha}\frac{1}{\alpha!}\partial^{\alpha}_{\xi}[\sigma({D}_{J})]
{D}_x^{\alpha}[\sigma({{D}_{J}}^{-1})]\\
&=(p_1+p_0)(q_{-1}+q_{-2}+q_{-3}+\cdots)\nonumber\\
&~~~+\sum_j(\partial_{\xi_j}p_1+\partial_{\xi_j}p_0)(
D_{x_j}q_{-1}+D_{x_j}q_{-2}+D_{x_j}q_{-3}+\cdots)\nonumber\\
&=p_1q_{-1}+(p_1q_{-2}+p_0q_{-1}+\sum_j\partial_{\xi_j}p_1D_{x_j}q_{-1})+\cdots,\nonumber
\end{align}
so
\begin{equation}
q_{-1}=p_1^{-1};~q_{-2}=-p_1^{-1}[p_0p_1^{-1}+\sum_j\partial_{\xi_j}p_1D_{x_j}(p_1^{-1})].
\end{equation}
\begin{lem} The following identities hold:
\begin{align}
\sigma_{-1}({{D}_{J}}^{-1})&=\frac{ic[J(\xi)]}{|\xi|^2};\\
\sigma_{-2}({{D}_{J}}^{-1})&=\frac{c[J(\xi)]\sigma_{0}({D}_{J})c[J(\xi)]}{|\xi|^4}+\frac{c[J(\xi)]}{|\xi|^6}\sum_ {j=1}^{n} c[J(dx_j)]
\Big[\partial_{x_j}(c[J(\xi)])|\xi|^2-c[J(\xi)]\partial_{x_j}(|\xi|^2)\Big].
\end{align}
\end{lem}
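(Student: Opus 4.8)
The plan is to read off $\sigma_{-1}(D_J^{-1})=q_{-1}$ and $\sigma_{-2}(D_J^{-1})=q_{-2}$ by substituting the symbols $p_1=ic[J(\xi)]$ and $p_0=\sigma_0(D_J)$ from the preceding lemma into the recursion $q_{-1}=p_1^{-1}$, $q_{-2}=-p_1^{-1}\big[p_0p_1^{-1}+\sum_j\partial_{\xi_j}p_1\,D_{x_j}(p_1^{-1})\big]$ already derived from the composition formula of pseudodifferential operators. The computation is purely algebraic; the only structural input beyond the recursion is the inversion of the leading symbol $p_1$.

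First I would compute $p_1^{-1}$. The key point is that, since $J$ preserves the metric by (2.9) and satisfies $J^2=\texttt{id}$, the Clifford relation (2.6) gives $c[J(\xi)]^2=-g^M(J(\xi),J(\xi))=-|\xi|^2$. Hence $p_1^2=(ic[J(\xi)])^2=-c[J(\xi)]^2=|\xi|^2$, so
\[
p_1^{-1}=\frac{p_1}{|\xi|^2}=\frac{ic[J(\xi)]}{|\xi|^2},
\]
which is precisely $q_{-1}=\sigma_{-1}(D_J^{-1})$. This is the one step that genuinely uses the geometry of $J$: the leading symbol is $ic[J(\xi)]$ rather than $ic(\xi)$, and it inverts in the same clean way only because $J$ is an isometry.

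Next I would substitute into the formula for $q_{-2}$. For the first summand, $-p_1^{-1}p_0p_1^{-1}=-\frac{ic[J(\xi)]}{|\xi|^2}\,\sigma_0(D_J)\,\frac{ic[J(\xi)]}{|\xi|^2}$, where the two factors of $i$ produce $-i^2=1$, giving the first term $\frac{c[J(\xi)]\sigma_0(D_J)c[J(\xi)]}{|\xi|^4}$. For the second summand I would use linearity of $\xi\mapsto c[J(\xi)]$ together with $\partial_{\xi_j}\xi=dx_j$ to obtain $\partial_{\xi_j}p_1=ic[J(dx_j)]$, and the convention $D_{x_j}=-i\partial_{x_j}$ to obtain $D_{x_j}(p_1^{-1})=-i\partial_{x_j}\!\big(\tfrac{ic[J(\xi)]}{|\xi|^2}\big)=\partial_{x_j}\!\big(\tfrac{c[J(\xi)]}{|\xi|^2}\big)$. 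Then $-p_1^{-1}\partial_{\xi_j}p_1=\frac{c[J(\xi)]c[J(dx_j)]}{|\xi|^2}$ (again $-i^2=1$), while the quotient rule gives $\partial_{x_j}\!\big(\tfrac{c[J(\xi)]}{|\xi|^2}\big)=\frac{\partial_{x_j}(c[J(\xi)])|\xi|^2-c[J(\xi)]\partial_{x_j}(|\xi|^2)}{|\xi|^4}$. Summing over $j$ and collecting the powers of $|\xi|$ yields exactly the second term $\frac{c[J(\xi)]}{|\xi|^6}\sum_j c[J(dx_j)]\big[\partial_{x_j}(c[J(\xi)])|\xi|^2-c[J(\xi)]\partial_{x_j}(|\xi|^2)\big]$.

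The step I expect to be most error-prone, rather than deep, is the bookkeeping: tracking the factors of $i$ entering at three places (the symbol $p_1$ itself, $\partial_{\xi_j}p_1$, and the convention $D_{x_j}=-i\partial_{x_j}$), and preserving the left-to-right order of the noncommuting Clifford factors, since $c[J(dx_j)]$ must remain sandwiched between $p_1^{-1}$ and the $x_j$-derivative of $c[J(\xi)]/|\xi|^2$ and cannot be commuted past $c[J(\xi)]$. Beyond the Clifford identity $c[J(\xi)]^2=-|\xi|^2$ and the Leibniz and quotient rules, no further input is required.
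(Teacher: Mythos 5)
Your proposal is correct and is essentially the paper's own argument: the paper derives the recursion $q_{-1}=p_1^{-1}$, $q_{-2}=-p_1^{-1}[p_0p_1^{-1}+\sum_j\partial_{\xi_j}p_1D_{x_j}(p_1^{-1})]$ from the composition formula immediately before stating the lemma, and the lemma is obtained exactly as you describe, by substituting $p_1=ic[J(\xi)]$, $p_0=\sigma_0(D_J)$ and using $c[J(\xi)]^2=-|J(\xi)|^2=-|\xi|^2$ (which requires $J$ to be an isometry, as you correctly flag). Your bookkeeping of the factors of $i$ and the ordering of the noncommuting Clifford factors matches the stated formulas.
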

\indent When $n=4$, then ${\rm tr}[{\rm \texttt{id}}]=4,$ since the sum is taken over $
r+l-k-j-|\alpha|-1=-4,~~r\leq -1,l\leq-1,$ then we have the following five cases:\\

\noindent  {\bf case a)~I)}~$r=-1,~l=-1,~k=j=0,~|\alpha|=1$\\

\noindent By applying the formula shown in (3.18), we can calculate
\begin{equation}
\Psi_1=-\int_{|\xi'|=1}\int^{+\infty}_{-\infty}\sum_{|\alpha|=1}
 {\rm trace}[\partial^\alpha_{\xi'}\pi^+_{\xi_n}\sigma_{-1}({{D}_{J}}^{-1})\times
 \partial^\alpha_{x'}\partial_{\xi_n}\sigma_{-1}({{D}_{J}}^{-1})](x_0)d\xi_n\sigma(\xi')dx'.
\end{equation}
By Lemma 3.3, for $i<n,$ then
\begin{align}
\partial_{x_i}\left(\frac{ic[J(\xi)]}{|\xi|^2}\right)(x_0)
=\frac{i\partial_{x_i}(c[J(\xi)])(x_0)}{|\xi|^2}-\frac{ic[J(\xi)]\partial_{x_i}(|\xi|^2)(x_0)}{|\xi|^4}
=\frac{i\partial_{x_i}(c[J(\xi)])(x_0)}{|\xi|^2},
\end{align}
where $J(dx_{p})=\sum^{n}_{h=1}a^{p}_{h}dx_{h}.$\\
It is clear that
\begin{align}
\partial_{x_i}\left(\frac{ic[J(\xi)]}{|\xi|^2}\right)(x_0)
&=\frac{i\partial_{x_i}(c[J(\sum^{n}_{p=1}\xi_{p}dx_{p})])(x_0)}{|\xi|^2}=\frac{i\sum^{n}_{p=1}\xi_{p}\partial_{x_i}(c(\sum^{n}_{h=1}a^{p}_{h}dx_{h}))(x_0)}{|\xi|^2}\\
&=\frac{i\sum^{n}_{p,h=1}\xi_{p}\partial_{x_i}(a^{p}_{h})c(dx_{h})(x_0)}{|\xi|^2}+\frac{i\sum^{n}_{p,h=1}\xi_{p}a^{p}_{h}\partial_{x_i}(c(dx_{h}))(x_0)}{|\xi|^2}\nonumber\\
&=\frac{i\sum^{n}_{p,h=1}\xi_{p}\partial_{x_i}(a^{p}_{h})c(dx_{h})(x_0)}{|\xi|^2}.\nonumber
\end{align}
Then we have
\begin{align}
\partial_{x_i}\left(\frac{ic[J(\xi)]}{|\xi|^2}\right)(x_0)|_{|\xi'|=1}
&=\frac{i\sum^{n}_{h=1}\sum^{n-1}_{p=1}\xi_{p}\partial_{x_i}(a^{p}_{h})c(dx_{h})(x_0)}{1+\xi_{n}^2}+\frac{i\sum^{n}_{h=1}\xi_{n}\partial_{x_i}(a^{n}_{h})c(dx_{h})(x_0)}{1+\xi_{n}^2}.
\end{align}
An easy computation shows that
\begin{align}
\partial_{\xi_{n}}\partial_{x_i}\left(\frac{ic[J(\xi)]}{|\xi|^2}\right)(x_0)|_{|\xi'|=1}
&=i\sum^{n}_{h=1}\sum^{n-1}_{p=1}\xi_{p}\partial_{x_i}(a^{p}_{h})c(dx_{h})\partial_{\xi_{n}}\left(\frac{1}{1+\xi_{n}^2}\right)\\
&+i\sum^{n}_{h=1}\partial_{x_i}(a^{n}_{h})c(dx_{h})\partial_{\xi_{n}}\left(\frac{\xi_{n}}{1+\xi_{n}^2}\right)\nonumber\\
&=-\frac{2i\xi_{n}}{(1+\xi_{n}^2)^2}\sum^{n}_{h=1}\sum^{n-1}_{p=1}\xi_{p}\partial_{x_i}(a^{p}_{h})c(dx_{h})\nonumber\\
&+\frac{i(1-\xi_{n}^2)}{(1+\xi_{n}^2)^2}\sum^{n}_{h=1}\partial_{x_i}(a^{n}_{h})c(dx_{h}).\nonumber
\end{align}
Likewise,
\begin{align}
\partial_{\xi_i}\left(\frac{ic[J(\xi)]}{|\xi|^2}\right)(x_0)
=\partial_{\xi_i}\left(\frac{i\sum^{n}_{q=1}\xi_{q}c[J(dx_q)]}{|\xi|^2}\right)(x_0),
\end{align}
\begin{align}
\partial_{\xi_i}\left(\frac{ic[J(\xi)]}{|\xi|^2}\right)(x_0)|_{|\xi'|=1}
=\frac{ic[J(dx_i)]}{1+\xi_{n}^2}-\frac{2i\sum^{n}_{q=1}\xi_{i}\xi_{q}c[J(dx_q)]}{(1+\xi_{n}^2)^2}.
\end{align}
Then we get
\begin{align}
\pi^+_{\xi_n}\partial_{\xi_i}\left(\frac{ic[J(\xi)]}{|\xi|^2}\right)|_{|\xi'|=1}
&=ic[J(dx_i)]\pi^+_{\xi_n}\left(\frac{1}{1+\xi_{n}^2}\right)-2i\xi_{i}c[J(dx_n)]\pi^+_{\xi_n}\left(\frac{\xi_{n}}{(1+\xi_{n}^2)^2}\right)\\
&-2i\sum^{n-1}_{q=1}\xi_{i}\xi_{q}c[J(dx_q)]\pi^+_{\xi_n}\left(\frac{1}{(1+\xi_{n}^2)^2}\right).\nonumber
\end{align}
By the Cauchy integral formula, we have
\begin{align}
\pi^+_{\xi_n}\left(\frac{1}{1+\xi_{n}^2}\right)(x_0)&=\frac{1}{2\pi i}{\rm lim}_{u\rightarrow
0^-}\int_{\Gamma^+}\frac{\frac{1}{(\eta_n+i)(\xi_n+iu-\eta_n)}}{(\eta_n-i)}d\eta_n\\
&=\left[\frac{1}{(\eta_n+i)(\xi_n-\eta_n)}\right]|_{\eta_n=i}\nonumber\\
&=\frac{1}{2i(\xi_n-i)},\nonumber
\end{align}
\begin{align}
\pi^+_{\xi_n}\left(\frac{1}{(1+\xi_{n}^2)^2}\right)(x_0)
&=\frac{1}{2\pi i}{\rm lim}_{u\rightarrow
0^-}\int_{\Gamma^+}\frac{\frac{1}{(\eta_n+i)^2(\xi_n+iu-\eta_n)}}{(\eta_n-i)^2}d\eta_n\\
&=\left[\frac{1}{(\eta_n+i)^2(\xi_n-\eta_n)}\right]^{(1)}|_{\eta_n=i}\nonumber\\
&=-\frac{i\xi_n+2}{4(\xi_n-i)^2}\nonumber
\end{align}
and
\begin{align}
\pi^+_{\xi_n}\left(\frac{\xi_n}{(1+\xi_{n}^2)^2}\right)(x_0)
&=\frac{1}{2\pi i}{\rm lim}_{u\rightarrow
0^-}\int_{\Gamma^+}\frac{\frac{\eta_n}{(\eta_n+i)^2(\xi_n+iu-\eta_n)}}{(\eta_n-i)^2}d\eta_n\\
&=\left[\frac{\eta_n}{(\eta_n+i)^2(\xi_n-\eta_n)}\right]^{(1)}|_{\eta_n=i}\nonumber\\
&=-\frac{i}{4(\xi_n-i)^2}.\nonumber
\end{align}
Means that
\begin{align}
\pi^+_{\xi_n}\partial_{\xi_i}\left(\frac{ic[J(\xi)]}{|\xi|^2}\right)(x_0)|_{|\xi'|=1}
&=\frac{1}{2(\xi_n-i)}c[J(dx_i)]-\frac{1}{2(\xi_n-i)^2}\xi_{i}c[J(dx_n)]\\
&+\frac{2i-\xi_n}{2(\xi_n-i)^2}\sum^{n-1}_{q=1}\xi_{i}\xi_{q}c[J(dx_q)].\nonumber
\end{align}
On account of the above result,
\begin{align}
&\sum_{|\alpha|=1}{\rm trace}[\partial^\alpha_{\xi'}\pi^+_{\xi_n}\sigma_{-1}({{D}_{J}}^{-1})\times\partial^\alpha_{x'}\partial_{\xi_n}\sigma_{-1}({{D}_{J}}^{-1})](x_0)|_{|\xi'|=1}\\
=&-\frac{i\xi_n}{(\xi_n-i)^3(\xi_n+i)^2}\sum_{h,\beta=1}^{n}\sum_{i,p=1}^{n-1}{\rm tr}[\xi_{p}a_{\beta}^{i}\partial_{x_i}(a_{h}^{p})c(dx_{\beta})c(dx_{h})]\nonumber\\
&+\frac{i(1-\xi^2_n)}{2(\xi_n-i)^3(\xi_n+i)^2}\sum_{h,\beta=1}^{n}\sum_{i=1}^{n-1}{\rm tr}[a_{\beta}^{i}\partial_{x_i}(a_{h}^{n})c(dx_{\beta})c(dx_{h})]\nonumber\\
&-\frac{i\xi_n(2i-\xi_n)}{(\xi_n-i)^4(\xi_n+i)^2}\sum_{h,\beta=1}^{n}\sum_{i,q,p=1}^{n-1}{\rm tr}[\xi_{i}\xi_{q}\xi_{p}a_{\beta}^{q}\partial_{x_i}(a_{h}^{p})c(dx_{\beta})c(dx_{h})]\nonumber\\
&+\frac{i(2i-\xi_n)(1-\xi_n^2)}{2(\xi_n-i)^4(\xi_n+i)^2}\sum_{h,\beta=1}^{n}\sum_{i,q=1}^{n-1}{\rm tr}[\xi_{i}\xi_{q}a_{\beta}^{q}\partial_{x_i}(a_{h}^{n})c(dx_{\beta})c(dx_{h})]\nonumber
\end{align}
\begin{align}
&+\frac{i\xi_n}{(\xi_n-i)^4(\xi_n+i)^2}\sum_{h,\beta=1}^{n}\sum_{i,p=1}^{n-1}{\rm tr}[\xi_{i}\xi_{p}a_{\beta}^{n}\partial_{x_i}(a_{h}^{p})c(dx_{\beta})c(dx_{h})]\nonumber\\
&-\frac{i(1-\xi_n^2)}{2(\xi_n-i)^4(\xi_n+i)^2}\sum_{h,\beta=1}^{n}\sum_{i=1}^{n-1}{\rm tr}[\xi_{i}a_{\beta}^{n}\partial_{x_i}(a_{h}^{n})c(dx_{\beta})c(dx_{h})].\nonumber
\end{align}
Since
\begin{align}
c(e_i)c(e_j)+c(e_j)c(e_i)=-2\delta_i^j,
\end{align}
then by the relation of the Clifford action and ${\rm tr}{AB}={\rm tr}{BA}$,  we have
\begin{align}
&\sum_{h,\beta=1}^{n}\sum_{i,p=1}^{n-1}{\rm tr}[\xi_{p}a_{\beta}^{i}\partial_{x_i}(a_{h}^{p})c(dx_{\beta})c(dx_{h})]=\sum_{h,\beta=1}^{n}\sum_{i,p=1}^{n-1}\xi_{p}a_{\beta}^{i}\partial_{x_i}(a_{h}^{p}){\rm tr}[c(dx_{\beta})c(dx_{h})]\\
&=-\sum_{h,\beta=1}^{n}\sum_{i,p=1}^{n-1}\xi_{p}a_{\beta}^{i}\partial_{x_i}(a_{h}^{p}){\rm tr}[c(dx_{h})c(dx_{\beta})]-2\sum_{\beta=1}^{n}\sum_{i,p=1}^{n-1}\xi_{p}a_{\beta}^{i}\partial_{x_i}(a_{\beta}^{p}){\rm tr}[\texttt{id}]\nonumber\\
&=-\sum_{h,\beta=1}^{n}\sum_{i,p=1}^{n-1}\xi_{p}a_{\beta}^{i}\partial_{x_i}(a_{h}^{p}){\rm tr}[c(dx_{\beta})c(dx_{h})]-2\sum_{\beta=1}^{n}\sum_{i,p=1}^{n-1}\xi_{p}a_{\beta}^{i}\partial_{x_i}(a_{\beta}^{p}){\rm tr}[\texttt{id}],\nonumber
\end{align}
that is
\begin{align}
\sum_{h,\beta=1}^{n}\sum_{i,p=1}^{n-1}{\rm tr}[\xi_{p}a_{\beta}^{i}\partial_{x_i}(a_{h}^{p})c(dx_{\beta})c(dx_{h})]=-\sum_{\beta=1}^{n}\sum_{i,p=1}^{n-1}\xi_{p}a_{\beta}^{i}\partial_{x_i}(a_{\beta}^{p}){\rm tr}[\texttt{id}].
\end{align}
Similarly, we have the following equalities:
\begin{align}
\sum_{h,\beta=1}^{n}\sum_{i=1}^{n-1}{\rm tr}[a_{\beta}^{i}\partial_{x_i}(a_{h}^{n})c(dx_{\beta})c(dx_{h})]=-\sum_{\beta=1}^{n}\sum_{i=1}^{n-1}a_{\beta}^{i}\partial_{x_i}(a_{\beta}^{n}){\rm tr}[\texttt{id}];
\end{align}
\begin{align}
\sum_{h,\beta=1}^{n}\sum_{i,q,p=1}^{n-1}{\rm tr}[\xi_{i}\xi_{q}\xi_{p}a_{\beta}^{q}\partial_{x_i}(a_{h}^{p})c(dx_{\beta})c(dx_{h})]=-\sum_{\beta=1}^{n}\sum_{i,q,p=1}^{n-1}\xi_{i}\xi_{q}\xi_{p}a_{\beta}^{q}\partial_{x_i}(a_{\beta}^{p})
{\rm tr}[\texttt{id}];
\end{align}
\begin{align}
\sum_{h,\beta=1}^{n}\sum_{i,q=1}^{n-1}{\rm tr}[\xi_{i}\xi_{q}a_{\beta}^{q}\partial_{x_i}(a_{h}^{n})c(dx_{\beta})c(dx_{h})]=-\sum_{\beta=1}^{n}\sum_{i,q=1}^{n-1}\xi_{i}\xi_{q}a_{\beta}^{q}\partial_{x_i}(a_{\beta}^{n}){\rm tr}[\texttt{id}];
\end{align}
\begin{align}
\sum_{h,\beta=1}^{n}\sum_{i,p=1}^{n-1}{\rm tr}[\xi_{i}\xi_{p}a_{\beta}^{n}\partial_{x_i}(a_{h}^{p})c(dx_{\beta})c(dx_{h})]=-\sum_{\beta=1}^{n}\sum_{i,p=1}^{n-1}\xi_{i}\xi_{p}a_{\beta}^{n}\partial_{x_i}(a_{\beta}^{p}){\rm tr}[\texttt{id}];
\end{align}
\begin{align}
\sum_{h,\beta=1}^{n}\sum_{i=1}^{n-1}{\rm tr}[\xi_{i}a_{\beta}^{n}\partial_{x_i}(a_{h}^{n})c(dx_{\beta})c(dx_{h})]=-\sum_{\beta=1}^{n}\sum_{i=1}^{n-1}\xi_{i}a_{\beta}^{n}\partial_{x_i}(a_{\beta}^{n}){\rm tr}[\texttt{id}].
\end{align}
We note that $i<n,$  $\int_{|\xi'|=1}{\{\xi_{i_1}\cdot\cdot\cdot\xi_{i_{2d+1}}}\}\sigma(\xi')=0,$ then we have
\begin{align}
\Psi_1
&=-\int_{|\xi'|=1}\int^{+\infty}_{-\infty}\sum_{|\alpha|=1}{\rm trace}[\partial^\alpha_{\xi'}\pi^+_{\xi_n}\sigma_{-1}({{D}_{J}}^{-1})\times \partial^\alpha_{x'}\partial_{\xi_n}\sigma_{-1}({{D}_{J}}^{-1})](x_0)d\xi_n\sigma(\xi')dx'\\
&=\int_{|\xi'|=1}\int^{+\infty}_{-\infty}\frac{i(1-\xi^2_n)}{2(\xi_n-i)^3(\xi_n+i)^2}\sum_{\beta=1}^{n}\sum_{i=1}^{n-1}a_{\beta}^{i}\partial_{x_i}(a_{\beta}^{n}){\rm tr}[\texttt{id}]d\xi_n\sigma(\xi')dx'\nonumber\\
&+\int_{|\xi'|=1}\int^{+\infty}_{-\infty}\frac{i(2i-\xi_n)(1-\xi_n^2)}{2(\xi_n-i)^4(\xi_n+i)^2}\sum_{\beta=1}^{n}\sum_{i,q=1}^{n-1}\xi_{i}\xi_{q}a_{\beta}^{q}\partial_{x_i}(a_{\beta}^{n}){\rm tr}[\texttt{id}]d\xi_n\sigma(\xi')dx'\nonumber\\
&+\int_{|\xi'|=1}\int^{+\infty}_{-\infty}\frac{i\xi_n}{(\xi_n-i)^4(\xi_n+i)^2}\sum_{\beta=1}^{n}\sum_{i,p=1}^{n-1}\xi_{i}\xi_{p}a_{\beta}^{n}\partial_{x_i}(a_{\beta}^{p}){\rm tr}[\texttt{id}]d\xi_n\sigma(\xi')dx'\nonumber.\nonumber
\end{align}
From \cite{Ka}, we have $\int_{|\xi'|=1}\xi_i\xi_j=\frac{4\pi}{3}\delta_i^j,$ then
\begin{align}
\Psi_1
&=\sum_{\beta=1}^{n}\sum_{i=1}^{n-1}a_{\beta}^{i}\partial_{x_i}(a_{\beta}^{n}){\rm tr}[\texttt{id}]\Omega_3\frac{2\pi i}{2!}\Big[\frac{i(1-\xi^2_n)}{2(\xi_n+i)^2}\Big]^{(2)}|_{\xi_n=i}dx'\\
&+\sum_{\beta=1}^{n}\sum_{i=1}^{n-1}a_{\beta}^{i}\partial_{x_i}(a_{\beta}^{n}){\rm tr}[\texttt{id}]\Omega_3\frac{4\pi}{3}\frac{2\pi i}{3!}\Big[\frac{i(2i-\xi_n)(1-\xi_n^2)}{2(\xi_n+i)^2}\Big]^{(3)}|_{\xi_n=i}dx'\nonumber\\
&+\sum_{\beta=1}^{n}\sum_{i=1}^{n-1}a_{\beta}^{n}\partial_{x_i}(a_{\beta}^{i}){\rm tr}[\texttt{id}]\Omega_3\frac{4\pi}{3}\frac{2\pi i}{3!}\Big[\frac{i\xi_n}{(\xi_n+i)^2}\Big]^{(3)}|_{\xi_n=i}dx'\nonumber\\
&=\sum_{\beta=1}^{n}\sum_{i=1}^{n-1}a_{\beta}^{i}\partial_{x_i}(a_{\beta}^{n}){\rm tr}[\texttt{id}]\Omega_3(-\frac{\pi}{8}+\frac{\pi^2}{3})dx'+\sum_{\beta=1}^{n}\sum_{i=1}^{n-1}a_{\beta}^{n}\partial_{x_i}(a_{\beta}^{i}){\rm tr}[\texttt{id}]\Omega_3(-\frac{\pi^2}{6})dx'.\nonumber
\end{align}

\noindent  {\bf case a)~II)}~$r=-1,~l=-1,~k=|\alpha|=0,~j=1$\\

\noindent Using (3.18), we get
\begin{equation}
\Psi_2=-\frac{1}{2}\int_{|\xi'|=1}\int^{+\infty}_{-\infty} {\rm
trace} [\partial_{x_n}\pi^+_{\xi_n}\sigma_{-1}({{D}_{J}}^{-1})\times
\partial_{\xi_n}^2\sigma_{-1}({{D}_{J}}^{-1})](x_0)d\xi_n\sigma(\xi')dx'.
\end{equation}
It is easy to check that
\begin{align}
\partial_{x_n}\left(\frac{ic[J(\xi)]}{|\xi|^2}\right)(x_0)
&=\frac{i\partial_{x_n}(c[J(\xi)])(x_0)}{|\xi|^2}-\frac{ic[J(\xi)]\partial_{x_n}(|\xi|^2)(x_0)}{|\xi|^4}\\
&=\frac{i\sum^{n}_{p=1}\xi_{p}\partial_{x_n}(c(\sum^{n}_{h=1}a^{p}_{h}dx_{h}))(x_0)}{|\xi|^2}-\frac{ih'(0)|\xi'|^2\sum^{n}_{p=1}\xi_{p}c(\sum^{n}_{h=1}a^{p}_{h}dx_{h})}{|\xi|^4}\nonumber\\
&=\frac{i\sum^{n}_{p,h=1}\xi_{p}\partial_{x_n}(a^{p}_{h})c(dx_{h})(x_0)}{|\xi|^2}+\frac{i\sum^{n}_{p=1}\sum^{n-1}_{h=1}\xi_{p}a^{p}_{h}\partial_{x_n}(c(dx_{h}))(x_0)}{|\xi|^2}\nonumber\\
&-\frac{ih'(0)|\xi'|^2\sum^{n}_{p,h=1}\xi_{p}a^{p}_{h}c(dx_{h})}{|\xi|^4}.\nonumber
\end{align}
Then
\begin{align}
&\pi^+_{\xi_n}\partial_{x_n}\left(\frac{ic[J(\xi)]}{|\xi|^2}\right)(x_0)|_{|\xi'|=1}\\
&=i\sum^{n}_{h=1}\sum^{n-1}_{p=1}\xi_{p}\partial_{x_n}(a^{p}_{h})c(dx_{h})\pi^+_{\xi_n}\left(\frac{1}{1+\xi_n^2}\right)+i\sum^{n}_{h=1}\partial_{x_n}(a^{n}_{h})c(dx_{h})\pi^+_{\xi_n}\left(\frac{\xi_{n}}{1+\xi_n^2}\right)\nonumber\\
&+i\sum^{n-1}_{p,h=1}\xi_{p}a^{p}_{h}\partial_{x_n}(c(dx_{h}))\pi^+_{\xi_n}\left(\frac{1}{1+\xi_n^2}\right)+i\sum^{n-1}_{h=1}a^{n}_{h}\partial_{x_n}(c(dx_{h}))\pi^+_{\xi_n}\left(\frac{\xi_{n}}{1+\xi_n^2}\right)\nonumber\\
&-ih'(0)\sum^{n}_{h=1}\sum^{n-1}_{p=1}\xi_{p}a^{p}_{h}c(dx_{h})\pi^+_{\xi_n}\left(\frac{1}{(1+\xi_n^2)^2}\right)-ih'(0)\sum^{n}_{h=1}a^{n}_{h}c(dx_{h})\pi^+_{\xi_n}\left(\frac{\xi_{n}}{(1+\xi_n^2)^2}\right)\nonumber\\
&=\frac{1}{2(\xi_n-i)}\sum^{n}_{h=1}\sum^{n-1}_{p=1}\xi_{p}\partial_{x_n}(a^{p}_{h})c(dx_{h})+\frac{i}{2(\xi_n-i)}\sum^{n}_{h=1}\partial_{x_n}(a^{n}_{h})c(dx_{h})\nonumber\\
&+\frac{1}{2(\xi_n-i)}\sum^{n-1}_{p,h=1}\xi_{p}a^{p}_{h}\partial_{x_n}(c(dx_{h}))+\frac{i}{2(\xi_n-i)}\sum^{n-1}_{h=1}a^{n}_{h}\partial_{x_n}(c(dx_{h}))\nonumber\\
&+\frac{2i-\xi_n}{4(\xi_n-i)^2}h'(0)\sum^{n}_{h=1}\sum^{n-1}_{p=1}\xi_{p}a^{p}_{h}c(dx_{h})-\frac{1}{4(\xi_n-i)^2}h'(0)\sum^{n}_{h=1}a^{n}_{h}c(dx_{h}),\nonumber
\end{align}
where $\sum_{h=1}^{n-1}\partial_{x_n}(c(dx_h))=\sum_{h=1}^{n-1}\frac{1}{2}h'(0)c(dx_h).$\\
By calculation, we have
\begin{align}
\partial_{\xi_n}\left(\frac{ic[J(\xi)]}{|\xi|^2}\right)(x_0)
=\partial_{\xi_n}\left(\frac{i\sum^{n}_{i=1}\xi_{i}c[J(dx_i)]}{|\xi|^2}\right)(x_0)
=\partial_{\xi_n}\left(\frac{i\sum^{n}_{i,\beta=1}\xi_{i}a_{\beta}^ic(dx_{\beta})}{|\xi|^2}\right)(x_0),
\end{align}
\begin{align}
\partial_{\xi_n}\left(\frac{ic[J(\xi)]}{|\xi|^2}\right)(x_0)|_{|\xi'|=1}
&=i\sum^{n}_{\beta=1}\sum^{n-1}_{i=1}\xi_{i}a_{\beta}^ic(dx_{\beta})\partial_{\xi_n}\left(\frac{1}{1+\xi_{n}^2}\right)+i\sum^{n}_{\beta=1}a_{\beta}^nc(dx_{\beta})\partial_{\xi_n}\left(\frac{\xi_{n}}{1+\xi_{n}^2}\right)\\
&=-\frac{2i\xi_n}{(1+\xi_{n}^2)^2}\sum^{n}_{\beta=1}\sum^{n-1}_{i=1}\xi_{i}a_{\beta}^ic(dx_{\beta})+\frac{i(1-\xi_n^2)}{(1+\xi_{n}^2)^2}\sum^{n}_{\beta=1}a_{\beta}^nc(dx_{\beta})\nonumber
\end{align}
and
\begin{align}
\partial_{\xi_n}^2\left(\frac{ic[J(\xi)]}{|\xi|^2}\right)(x_0)|_{|\xi'|=1}
&=\frac{2i(-1+3\xi_n^2)}{(1+\xi_{n}^2)^3}\sum^{n}_{\beta=1}\sum^{n-1}_{i=1}\xi_{i}a_{\beta}^ic(dx_{\beta})+\frac{2i\xi_n(-3+\xi_n^2)}{(1+\xi_{n}^2)^3}\sum^{n}_{\beta=1}a_{\beta}^nc(dx_{\beta}).
\end{align}
Similar to the formulae (3.42)-(3.48), we have
\begin{align}
&{\rm trace} [\partial_{x_n}\pi^+_{\xi_n}\sigma_{-1}({{D}_{J}}^{-1})\times
\partial_{\xi_n}^2\sigma_{-1}({{D}_{J}}^{-1})](x_0)|_{|\xi'|=1}\\
&=-\frac{i(-1+3\xi_n^2)}{(\xi_n-i)^4(\xi_n+i)^3}\sum_{\beta=1}^{n}\sum_{p,i=1}^{n-1}\xi_{p}\xi_{i}a_{\beta}^{i}\partial_{x_n}(a_{\beta}^{p}){\rm tr}[\texttt{id}]+\frac{-1+3\xi_n^2}{(\xi_n-i)^4(\xi_n+i)^3}\sum_{\beta=1}^{n}\sum_{i=1}^{n-1}\xi_{i}a_{\beta}^{i}\partial_{x_n}(a_{\beta}^{n}){\rm tr}[\texttt{id}]\nonumber\\
&-\frac{i\xi_n(-3+\xi_n^2)}{(\xi_n-i)^4(\xi_n+i)^3}\sum_{\beta=1}^{n}\sum_{p=1}^{n-1}\xi_{p}a_{\beta}^{n}\partial_{x_n}(a_{\beta}^{p}){\rm tr}[\texttt{id}]+\frac{\xi_n(-3+\xi_n^2)}{(\xi_n-i)^4(\xi_n+i)^3}\sum_{\beta=1}^{n}a_{\beta}^{n}\partial_{x_n}(a_{\beta}^{n}){\rm tr}[\texttt{id}]\nonumber\\
&-\frac{i(-1+3\xi_n^2)}{2(\xi_n-i)^4(\xi_n+i)^3}h'(0)\sum_{p,i,\beta=1}^{n-1}\xi_{p}\xi_{i}a_{\beta}^{p}a_{\beta}^{i}{\rm tr}[\texttt{id}]+\frac{-1+3\xi_n^2}{2(\xi_n-i)^4(\xi_n+i)^3}h'(0)\sum_{i,\beta=1}^{n-1}\xi_{i}a_{\beta}^{n}a_{\beta}^{i}{\rm tr}[\texttt{id}]\nonumber\\
&-\frac{i\xi_n(-3+\xi_n^2)}{2(\xi_n-i)^4(\xi_n+i)^3}h'(0)\sum_{p,\beta=1}^{n-1}\xi_{p}a_{\beta}^{p}a_{\beta}^{n}{\rm tr}[\texttt{id}]+\frac{\xi_n(-3+\xi_n^2)}{2(\xi_n-i)^4(\xi_n+i)^3}h'(0)\sum_{\beta=1}^{n-1}(a_{\beta}^{n})^2{\rm tr}[\texttt{id}]\nonumber\\
&-\frac{i(2i-\xi_n)(-1+3\xi_n^2)}{2(\xi_n-i)^5(\xi_n+i)^3}h'(0)\sum_{\beta=1}^{n}\sum_{p,i=1}^{n-1}\xi_{p}\xi_{i}a_{\beta}^{p}a_{\beta}^{i}{\rm tr}[\texttt{id}]+\frac{i(-1+3\xi_n^2)}{2(\xi_n-i)^5(\xi_n+i)^3}h'(0)\sum_{\beta=1}^{n}\sum_{i=1}^{n-1}\xi_{i}a_{\beta}^{n}a_{\beta}^{i}{\rm tr}[\texttt{id}]\nonumber\\
&-\frac{i\xi_n(2i-\xi_n)(-3+\xi_n^2)}{2(\xi_n-i)^5(\xi_n+i)^3}h'(0)\sum_{\beta=1}^{n}\sum_{p=1}^{n-1}\xi_{p}a_{\beta}^{p}a_{\beta}^{n}{\rm tr}[\texttt{id}]+\frac{i\xi_n(-3+\xi_n^2)}{2(\xi_n-i)^5(\xi_n+i)^3}h'(0)\sum_{\beta=1}^{n}(a_{\beta}^{n})^2{\rm tr}[\texttt{id}].\nonumber
\end{align}
Consequently,
\begin{align}
\Psi_2
&=-\frac{1}{2}\int_{|\xi'|=1}\int^{+\infty}_{-\infty} {\rm
trace} [\partial_{x_n}\pi^+_{\xi_n}\sigma_{-1}({{D}_{J}}^{-1})\times
\partial_{\xi_n}^2\sigma_{-1}({{D}_{J}}^{-1})](x_0)d\xi_n\sigma(\xi')dx'\\
&=\int_{|\xi'|=1}\int^{+\infty}_{-\infty}\frac{i(-1+3\xi_n^2)}{2(\xi_n-i)^4(\xi_n+i)^3}\sum_{\beta=1}^{n}\sum_{p,i=1}^{n-1}\xi_{p}\xi_{i}a_{\beta}^{i}\partial_{x_n}(a_{\beta}^{p}){\rm tr}[\texttt{id}]d\xi_n\sigma(\xi')dx'\nonumber
\end{align}
\begin{align}
&+\int_{|\xi'|=1}\int^{+\infty}_{-\infty}-\frac{\xi_n(-3+\xi_n^2)}{2(\xi_n-i)^4(\xi_n+i)^3}\sum_{\beta=1}^{n}a_{\beta}^{n}\partial_{x_n}(a_{\beta}^{n}){\rm tr}[\texttt{id}]d\xi_n\sigma(\xi')dx'\nonumber\\
&+\int_{|\xi'|=1}\int^{+\infty}_{-\infty}\frac{i(-1+3\xi_n^2)}{4(\xi_n-i)^4(\xi_n+i)^3}h'(0)\sum_{p,i,\beta=1}^{n-1}\xi_{p}\xi_{i}a_{\beta}^{p}a_{\beta}^{i}{\rm tr}[\texttt{id}]d\xi_n\sigma(\xi')dx'\nonumber\\
&+\int_{|\xi'|=1}\int^{+\infty}_{-\infty}-\frac{\xi_n(-3+\xi_n^2)}{4(\xi_n-i)^4(\xi_n+i)^3}h'(0)\sum_{\beta=1}^{n-1}(a_{\beta}^{n})^2{\rm tr}[\texttt{id}]d\xi_n\sigma(\xi')dx'\nonumber\\
&+\int_{|\xi'|=1}\int^{+\infty}_{-\infty}\frac{i(2i-\xi_n)(-1+3\xi_n^2)}{4(\xi_n-i)^5(\xi_n+i)^3}h'(0)\sum_{\beta=1}^{n}\sum_{p,i=1}^{n-1}\xi_{p}\xi_{i}a_{\beta}^{p}a_{\beta}^{i}{\rm tr}[\texttt{id}]d\xi_n\sigma(\xi')dx'\nonumber\\
&+\int_{|\xi'|=1}\int^{+\infty}_{-\infty}-\frac{i\xi_n(-3+\xi_n^2)}{4(\xi_n-i)^5(\xi_n+i)^3}h'(0)\sum_{\beta=1}^{n}(a_{\beta}^{n})^2{\rm tr}[\texttt{id}]d\xi_n\sigma(\xi')dx'.\nonumber
\end{align}
Similar calculations to (3.49), it is shown that
\begin{align}
\Psi_2
&=\sum_{\beta=1}^{n}\sum_{i=1}^{n-1}a_{\beta}^{i}\partial_{x_n}(a_{\beta}^{i}){\rm tr}[\texttt{id}]\Omega_3\frac{4\pi}{3}\frac{2\pi i}{3!}\Big[\frac{i(-1+3\xi_n^2)}{2(\xi_n+i)^3}\Big]^{(3)}|_{\xi_n=i}dx'\\
&+\sum_{\beta=1}^{n}a_{\beta}^{n}\partial_{x_n}(a_{\beta}^{n}){\rm tr}[\texttt{id}]\Omega_3\frac{2\pi i}{3!}\Big[-\frac{\xi_n(-3+\xi_n^2)}{2(\xi_n+i)^3}\Big]^{(3)}|_{\xi_n=i}dx'\nonumber\\
&+\sum_{i,\beta=1}^{n-1}(a_{\beta}^{i})^2{\rm tr}[\texttt{id}]\Omega_3h'(0)\frac{4\pi}{3}\frac{2\pi i}{3!}\Big[\frac{i(-1+3\xi_n^2)}{4(\xi_n+i)^3}\Big]^{(3)}|_{\xi_n=i}dx'\nonumber\\
&+\sum_{\beta=1}^{n-1}(a_{\beta}^{n})^2{\rm tr}[\texttt{id}]\Omega_3h'(0)\frac{2\pi i}{3!}\Big[-\frac{\xi_n(-3+\xi_n^2)}{4(\xi_n+i)^3}\Big]^{(3)}|_{\xi_n=i}dx'\nonumber\\
&+\sum_{\beta=1}^{n}\sum_{i=1}^{n-1}(a_{\beta}^{i})^2{\rm tr}[\texttt{id}]\Omega_3h'(0)\frac{4\pi}{3}\frac{2\pi i}{4!}\Big[\frac{i(2i-\xi_n)(-1+3\xi_n^2)}{4(\xi_n+i)^3}\Big]^{(4)}|_{\xi_n=i}dx'\nonumber\\
&+\sum_{\beta=1}^{n}(a_{\beta}^{n})^2{\rm tr}[\texttt{id}]\Omega_3h'(0)\frac{2\pi i}{4!}\Big[-\frac{i\xi_n(-3+\xi_n^2)}{4(\xi_n+i)^3}\Big]^{(4)}|_{\xi_n=i}dx'\nonumber\\
&=\sum_{\beta=1}^{n}\sum_{i=1}^{n-1}a_{\beta}^{i}\partial_{x_n}(a_{\beta}^{i}){\rm tr}[\texttt{id}]\Omega_3(\frac{\pi^2}{12})dx'+\sum_{\beta=1}^{n}a_{\beta}^{n}\partial_{x_n}(a_{\beta}^{n}){\rm tr}[\texttt{id}]\Omega_3(\frac{\pi}{16})dx'\nonumber\\
&+\sum_{i,\beta=1}^{n-1}(a_{\beta}^{i})^2{\rm tr}[\texttt{id}]\Omega_3h'(0)(\frac{\pi^2}{24})dx'+\sum_{\beta=1}^{n-1}(a_{\beta}^{n})^2{\rm tr}[\texttt{id}]\Omega_3h'(0)(\frac{\pi}{32})dx'\nonumber
\end{align}
\begin{align}
&+\sum_{\beta=1}^{n}\sum_{i=1}^{n-1}(a_{\beta}^{i})^2{\rm tr}[\texttt{id}]\Omega_3h'(0)(-\frac{5\pi^2}{48})dx'+\sum_{\beta=1}^{n}(a_{\beta}^{n})^2{\rm tr}[\texttt{id}]\Omega_3h'(0)(-\frac{3\pi}{64})dx'.\nonumber
\end{align}

\noindent  {\bf case a)~III)}~$r=-1,~l=-1,~j=|\alpha|=0,~k=1$\\

\noindent By (3.18), we calculate that
\begin{equation}
\Psi_3=-\frac{1}{2}\int_{|\xi'|=1}\int^{+\infty}_{-\infty}
{\rm trace} [\partial_{\xi_n}\pi^+_{\xi_n}\sigma_{-1}({{D}_{J}}^{-1})\times
\partial_{\xi_n}\partial_{x_n}\sigma_{-1}({{D}_{J}}^{-1})](x_0)d\xi_n\sigma(\xi')dx'.\\
\end{equation}
It is easily seen that
\begin{align}
\pi^+_{\xi_n}\partial_{\xi_n}\left(\frac{ic[J(\xi)]}{|\xi|^2}\right)(x_0)|_{|\xi'|=1}
&=-\frac{1}{2(\xi_{n}-i)^2}\sum^{n}_{\beta=1}\sum^{n-1}_{i=1}\xi_{i}a_{\beta}^ic(dx_{\beta})-\frac{i}{2(\xi_{n}-i)^2}\sum^{n}_{\beta=1}a_{\beta}^nc(dx_{\beta}),
\end{align}
\begin{align}
&\partial_{\xi_n}\partial_{x_n}\left(\frac{ic[J(\xi)]}{|\xi|^2}\right)(x_0)|_{|\xi'|=1}\\
&=-\frac{2i\xi_n}{(1+\xi_n^2)^2}\sum^{n}_{h=1}\sum^{n-1}_{p=1}\xi_{p}\partial_{x_n}(a^{p}_{h})c(dx_{h})+\frac{i(1-\xi_n^2)}{(1+\xi_n^2)^2}\sum^{n}_{h=1}\partial_{x_n}(a^{n}_{h})c(dx_{h})\nonumber\\
&-\frac{2i\xi_n}{(1+\xi_n^2)^2}\sum^{n-1}_{p,h=1}\xi_{p}a^{p}_{h}\partial_{x_n}(c(dx_{h}))+\frac{i(1-\xi_n^2)}{(1+\xi_n^2)^2}\sum^{n-1}_{h=1}a^{n}_{h}\partial_{x_n}(c(dx_{h}))\nonumber\\
&+\frac{4i\xi_n}{(1+\xi_n^2)^3}h'(0)\sum^{n}_{h=1}\sum^{n-1}_{p=1}\xi_{p}a^{p}_{h}c(dx_{h})-\frac{i(1-3\xi_n^2)}{(1+\xi_n^2)^3}h'(0)\sum^{n}_{h=1}a^{n}_{h}c(dx_{h}).\nonumber
\end{align}
We see at once that
\begin{align}
&{\rm trace} [\partial_{\xi_n}\pi^+_{\xi_n}\sigma_{-1}({{D}_{J}}^{-1})\times \partial_{\xi_n}\partial_{x_n}\sigma_{-1}({{D}_{J}}^{-1})](x_0)|_{|\xi'|=1}\\
&=-\frac{i\xi_n}{(\xi_n-i)^4(\xi_n+i)^2}\sum_{\beta=1}^{n}\sum_{i,p=1}^{n-1}\xi_{i}\xi_{p}a_{\beta}^{i}\partial_{x_n}(a_{\beta}^{p}){\rm tr}[\texttt{id}]+\frac{i(1-\xi_n^2)}{2(\xi_n-i)^4(\xi_n+i)^2}\sum_{\beta=1}^{n}\sum_{i=1}^{n-1}\xi_{i}a_{\beta}^{i}\partial_{x_n}(a_{\beta}^{n}){\rm tr}[\texttt{id}]\nonumber\\
&-\frac{i\xi_n}{2(\xi_n-i)^4(\xi_n+i)^2}h'(0)\sum_{i,\beta,p=1}^{n-1}\xi_{i}\xi_{p}a_{\beta}^{i}a_{\beta}^{p}{\rm tr}[\texttt{id}]+\frac{i(1-\xi_n^2)}{4(\xi_n-i)^4(\xi_n+i)^2}h'(0)\sum_{i,\beta=1}^{n-1}\xi_{i}a_{\beta}^{i}a_{\beta}^{n}{\rm tr}[\texttt{id}]\nonumber
\end{align}
\begin{align}
&+\frac{2i\xi_n}{(\xi_n-i)^5(\xi_n+i)^3}h'(0)\sum_{\beta=1}^{n}\sum_{i,p=1}^{n-1}\xi_{i}\xi_{p}a_{\beta}^{i}a_{\beta}^{p}{\rm tr}[\texttt{id}]-\frac{i(1-3\xi_n^2)}{2(\xi_n-i)^5(\xi_n+i)^3}h'(0)\sum_{\beta=1}^{n}\sum_{i=1}^{n-1}\xi_{i}a_{\beta}^{i}a_{\beta}^{n}{\rm tr}[\texttt{id}]\nonumber\\
&+\frac{\xi_n}{(\xi_n-i)^4(\xi_n+i)^2}\sum_{\beta=1}^{n}\sum_{p=1}^{n-1}\xi_{p}a_{\beta}^{n}\partial_{x_n}(a_{\beta}^{p}){\rm tr}[\texttt{id}]-\frac{1-\xi_n^2}{2(\xi_n-i)^4(\xi_n+i)^2}\sum_{\beta=1}^{n}a_{\beta}^{n}\partial_{x_n}(a_{\beta}^{n}){\rm tr}[\texttt{id}]\nonumber\\
&+\frac{\xi_n}{(\xi_n-i)^4(\xi_n+i)^2}h'(0)\sum_{\beta,p=1}^{n-1}\xi_{p}a_{\beta}^{n}a_{\beta}^{p}{\rm tr}[\texttt{id}]-\frac{1-\xi_n^2}{4(\xi_n-i)^4(\xi_n+i)^2}h'(0)\sum_{\beta=1}^{n-1}(a_{\beta}^{n})^2{\rm tr}[\texttt{id}]\nonumber\\
&-\frac{2\xi_n}{(\xi_n-i)^5(\xi_n+i)^3}h'(0)\sum_{\beta=1}^{n}\sum_{p=1}^{n-1}\xi_{p}a_{\beta}^{n}a_{\beta}^{p}{\rm tr}[\texttt{id}]+\frac{1-3\xi_n^2}{2(\xi_n-i)^5(\xi_n+i)^3}h'(0)\sum_{\beta=1}^{n}(a_{\beta}^{n})^2{\rm tr}[\texttt{id}].\nonumber
\end{align}
Hence, we have
\begin{align}
\Psi_3
&=-\frac{1}{2}\int_{|\xi'|=1}\int^{+\infty}_{-\infty}
{\rm trace} [\partial_{\xi_n}\pi^+_{\xi_n}\sigma_{-1}({{D}_{J}}^{-1})\times
\partial_{\xi_n}\partial_{x_n}\sigma_{-1}({{D}_{J}}^{-1})](x_0)d\xi_n\sigma(\xi')dx'\\
&=\int_{|\xi'|=1}\int^{+\infty}_{-\infty}\frac{i\xi_n}{2(\xi_n-i)^4(\xi_n+i)^2}\sum_{\beta=1}^{n}\sum_{i,p=1}^{n-1}\xi_{i}\xi_{p}a_{\beta}^{i}\partial_{x_n}(a_{\beta}^{p}){\rm tr}[\texttt{id}]d\xi_n\sigma(\xi')dx'\nonumber\\
&+\int_{|\xi'|=1}\int^{+\infty}_{-\infty}\frac{i\xi_n}{4(\xi_n-i)^4(\xi_n+i)^2}h'(0)\sum_{i,\beta,p=1}^{n-1}\xi_{i}\xi_{p}a_{\beta}^{i}a_{\beta}^{p}{\rm tr}[\texttt{id}]d\xi_n\sigma(\xi')dx'\nonumber\\
&+\int_{|\xi'|=1}\int^{+\infty}_{-\infty}-\frac{i\xi_n}{(\xi_n-i)^5(\xi_n+i)^3}h'(0)\sum_{\beta=1}^{n}\sum_{i,p=1}^{n-1}\xi_{i}\xi_{p}a_{\beta}^{i}a_{\beta}^{p}{\rm tr}[\texttt{id}]d\xi_n\sigma(\xi')dx'\nonumber\\
&+\int_{|\xi'|=1}\int^{+\infty}_{-\infty}\frac{1-\xi_n^2}{4(\xi_n-i)^4(\xi_n+i)^2}\sum_{\beta=1}^{n}a_{\beta}^{n}\partial_{x_n}(a_{\beta}^{n}){\rm tr}[\texttt{id}]d\xi_n\sigma(\xi')dx'\nonumber\\
&+\int_{|\xi'|=1}\int^{+\infty}_{-\infty}\frac{1-\xi_n^2}{8(\xi_n-i)^4(\xi_n+i)^2}h'(0)\sum_{\beta=1}^{n-1}(a_{\beta}^{n})^2{\rm tr}[\texttt{id}]d\xi_n\sigma(\xi')dx'\nonumber\\
&+\int_{|\xi'|=1}\int^{+\infty}_{-\infty}-\frac{1-3\xi_n^2}{4(\xi_n-i)^5(\xi_n+i)^3}h'(0)\sum_{\beta=1}^{n}(a_{\beta}^{n})^2{\rm tr}[\texttt{id}]d\xi_n\sigma(\xi')dx'.\nonumber
\end{align}
Computations show that
\begin{align}
\Psi_3
&=\sum_{\beta=1}^{n}\sum_{i=1}^{n-1}a_{\beta}^{i}\partial_{x_n}(a_{\beta}^{i}){\rm tr}[\texttt{id}]\Omega_3\frac{4\pi}{3}\frac{2\pi i}{3!}\Big[\frac{i\xi_n}{2(\xi_n+i)^2}\Big]^{(3)}|_{\xi_n=i}dx'\\
&+\sum_{i,\beta=1}^{n-1}(a_{\beta}^{i})^2{\rm tr}[\texttt{id}]\Omega_3h'(0)\frac{4\pi}{3}\frac{2\pi i}{3!}\Big[\frac{i\xi_n}{4(\xi_n+i)^2}\Big]^{(3)}|_{\xi_n=i}dx'\nonumber
\end{align}
\begin{align}
&+\sum_{\beta=1}^{n}\sum_{i=1}^{n-1}(a_{\beta}^{i})^2{\rm tr}[\texttt{id}]\Omega_3h'(0)\frac{4\pi}{3}\frac{2\pi i}{4!}\Big[-\frac{i\xi_n}{(\xi_n+i)^3}\Big]^{(4)}|_{\xi_n=i}dx'\nonumber\\
&+\sum_{\beta=1}^{n}a_{\beta}^{n}\partial_{x_n}(a_{\beta}^{n}){\rm tr}[\texttt{id}]\Omega_3\frac{2\pi i}{3!}\Big[\frac{1-\xi_n^2}{4(\xi_n+i)^2}\Big]^{(3)}|_{\xi_n=i}dx'\nonumber\\
&+\sum_{\beta=1}^{n-1}(a_{\beta}^{n})^2{\rm tr}[\texttt{id}]\Omega_3h'(0)\frac{2\pi i}{3!}\Big[\frac{1-\xi_n^2}{8(\xi_n+i)^2}\Big]^{(3)}|_{\xi_n=i}dx'\nonumber\\
&+\sum_{\beta=1}^{n}(a_{\beta}^{n})^2{\rm tr}[\texttt{id}]\Omega_3h'(0)\frac{2\pi i}{4!}\Big[-\frac{1-3\xi_n^2}{4(\xi_n+i)^3}\Big]^{(4)}|_{\xi_n=i}dx'\nonumber\\
&=\sum_{\beta=1}^{n}\sum_{i=1}^{n-1}a_{\beta}^{i}\partial_{x_n}(a_{\beta}^{i}){\rm tr}[\texttt{id}]\Omega_3(-\frac{\pi^2}{12})dx'\nonumber
+\sum_{i,\beta=1}^{n-1}(a_{\beta}^{i})^2{\rm tr}[\texttt{id}]\Omega_3h'(0)(-\frac{\pi^2}{24})dx'\nonumber\\
&+\sum_{\beta=1}^{n}\sum_{i=1}^{n-1}(a_{\beta}^{i})^2{\rm tr}[\texttt{id}]\Omega_3h'(0)(\frac{5\pi^2}{48})dx'\nonumber
+\sum_{\beta=1}^{n}a_{\beta}^{n}\partial_{x_n}(a_{\beta}^{n}){\rm tr}[\texttt{id}]\Omega_3(-\frac{\pi}{16})dx'\nonumber\\
&+\sum_{\beta=1}^{n-1}(a_{\beta}^{n})^2{\rm tr}[\texttt{id}]\Omega_3h'(0)(-\frac{\pi}{32})dx'\nonumber
+\sum_{\beta=1}^{n}(a_{\beta}^{n})^2{\rm tr}[\texttt{id}]\Omega_3h'(0)(\frac{3\pi}{64})dx'.\nonumber
\end{align}

In combination with the calculation,
\begin{equation}
\Psi_1+\Psi_2+\Psi_3=\sum_{\beta=1}^{n}\sum_{i=1}^{n-1}a_{\beta}^{i}\partial_{x_i}(a_{\beta}^{n}){\rm tr}[\texttt{id}]\Omega_3(-\frac{\pi}{8}+\frac{\pi^2}{3})dx'+\sum_{\beta=1}^{n}\sum_{i=1}^{n-1}a_{\beta}^{n}\partial_{x_i}(a_{\beta}^{i}){\rm tr}[\texttt{id}]\Omega_3(-\frac{\pi^2}{6})dx'.
\end{equation}

\noindent  {\bf case b)}~$r=-2,~l=-1,~k=j=|\alpha|=0$\\

\noindent Similarly, we get
\begin{align}
\Psi_4&=-i\int_{|\xi'|=1}\int^{+\infty}_{-\infty}{\rm trace} [\pi^+_{\xi_n}\sigma_{-2}({{D}_{J}}^{-1})\times
\partial_{\xi_n}\sigma_{-1}({{D}_{J}}^{-1})](x_0)d\xi_n\sigma(\xi')dx'.
\end{align}
We first compute\\
\begin{align}
\sigma_{-2}({{D}_{J}}^{-1})(x_0)&=\frac{c[J(\xi)]\sigma_{0}({D}_{J})(x_0)c[J(\xi)]}{|\xi|^4}+\frac{c[J(\xi)]}{|\xi|^6}\sum_{j=1}^{n} c[J(dx_j)]\Big[\sum_{p,h=1}^{n}\xi_p\partial_{x_j}(a_{h}^{p})c(dx_h)|\xi|^2\\
&+\sum_{p,h=1}^{n}\xi_pa_{h}^{p}\partial_{x_j}(c(dx_h))|\xi|^2-c[J(\xi)]\partial_{x_j}(|\xi|^2)\Big](x_0)\nonumber
\end{align}
\begin{align}
&=\frac{c[J(\xi)]\sigma_{0}({D}_{J})(x_0)c[J(\xi)]}{|\xi|^4}-\frac{c[J(\xi)]}{|\xi|^6}h'(0)|\xi'|^2c[J(dx_n)]c[J(\xi)]\nonumber\\
&+\frac{c[J(\xi)]}{|\xi|^4}\Big[\sum_{j,p,h=1}^{n}\xi_p\partial_{x_j}(a_{h}^{p})c[J(dx_j)]c(dx_h)+\sum_{p=1}^{n}\sum_{h=1}^{n-1}\xi_pa_{h}^{p}c[J(dx_n)]\partial_{x_n}(c(dx_h))\Big]
\nonumber
\end{align}
where
\begin{align}
\sigma_{0}({{D}_{J}})(x_0)
&=-\frac{1}{4}\sum_{i,j,k=1}^{n}\omega_{j,k}(e_i)(x_{0})c[J(e_i)]c(e_j)c(e_k)\\
&=-\frac{1}{4}h'(0)\sum_{i=1}^{n-1}c[J(e_i)]c(e_n)c(e_i)\nonumber\\
&=-\frac{1}{4}h'(0)\sum_{\mu=1}^{n}\sum_{\nu=1}^{n-1}a_{\nu}^{\mu}c(dx_{\mu})c(dx_{n})c(dx_{\nu}).\nonumber
\end{align}
It is obvious that
\begin{align}
&\pi^+_{\xi_n}\sigma_{-2}({{D}_{J}}^{-1})(x_0)|_{|\xi'|=1}=\pi^+_{\xi_n}\Big(\frac{c[J(\xi)]\sigma_{0}({D}_{J})(x_0)c[J(\xi)]}{(1+\xi_n^2)^2}\Big)-h'(0)\pi^+_{\xi_n}\Big(\frac{c[J(\xi)]}{(1+\xi_n^2)^3}c[J(dx_n)]c[J(\xi)]\Big)\\
&+\pi^+_{\xi_n}\Big(\frac{c[J(\xi)]}{(1+\xi_n^2)^2}\Big[\sum_{j,p,h=1}^{n}\xi_p\partial_{x_j}(a_{h}^{p})c[J(dx_j)]c(dx_h)+\sum_{p=1}^{n}\sum_{h=1}^{n-1}\xi_pa_{h}^{p}c[J(dx_n)]\partial_{x_n}(c(dx_h))\Big]\Big).\nonumber
\end{align}
For the sake of convenience in writing, we denote
\begin{align}
A_1(x_0)&=\frac{c[J(\xi)]\sigma_{0}({D}_{J})(x_0)c[J(\xi)]}{(1+\xi_n^2)^2};\\
A_2(x_0)&=\frac{c[J(\xi)]}{(1+\xi_n^2)^2}\Big[\sum_{j,p,h=1}^{n}\xi_p\partial_{x_j}(a_{h}^{p})c[J(dx_j)]c(dx_h)+\sum_{p=1}^{n}\sum_{h=1}^{n-1}\xi_pa_{h}^{p}c[J(dx_n)]\partial_{x_n}(c(dx_h))\Big];\\
A_3(x_0)&=\frac{c[J(\xi)]}{(1+\xi_n^2)^3}c[J(dx_n)]c[J(\xi)],
\end{align}
means that
\begin{align}
&\pi^+_{\xi_n}\sigma_{-2}({{D}_{J}}^{-1})(x_0)|_{|\xi'|=1}=\pi^+_{\xi_n}(A_1(x_0))+\pi^+_{\xi_n}(A_2(x_0))-h'(0)\pi^+_{\xi_n}(A_3(x_0)).
\end{align}
By computation, we have
\begin{align}
\pi^+_{\xi_n}(A_1(x_0))&=\pi^+_{\xi_n}\Big(\frac{\sum_{q,\alpha,l,\gamma=1}^{n}\xi_{q}\xi_{\alpha}a_{l}^{q}a_{\gamma}^{\alpha}c(dx_l)\sigma_{0}({D}_{J})(x_0)c(dx_{\gamma})}{(1+\xi_n^2)^2}\Big)\\
&=-\frac{i\xi_n}{4(\xi_n-i)^2}\sum_{l,\gamma=1}^{n}a_{l}^{n}a_{\gamma}^{n}c(dx_l)\sigma_{0}({D}_{J})(x_0)c(dx_{\gamma})\nonumber\\
&-\frac{i}{4(\xi_n-i)^2}\sum_{l,\gamma=1}^{n}\sum_{q=1}^{n-1}\xi_{q}a_{l}^{q}a_{\gamma}^{n}c(dx_l)\sigma_{0}({D}_{J})(x_0)c(dx_{\gamma})\nonumber\\
&-\frac{i}{4(\xi_n-i)^2}\sum_{l,\gamma=1}^{n}\sum_{\alpha=1}^{n-1}\xi_{\alpha}a_{l}^{n}a_{\gamma}^{\alpha}c(dx_l)\sigma_{0}({D}_{J})(x_0)c(dx_{\gamma})\nonumber\\
&-\frac{i\xi_n+2}{4(\xi_n-i)^2}\sum_{l,\gamma=1}^{n}\sum_{q,\alpha=1}^{n-1}\xi_{q}\xi_{\alpha}a_{l}^{q}a_{\gamma}^{\alpha}c(dx_l)\sigma_{0}({D}_{J})(x_0)c(dx_{\gamma}).\nonumber
\end{align}
Thus, we have
\begin{align}
&{\rm trace} [\pi^+_{\xi_n}(A_1(x_0)) \times \partial_{\xi_n}\sigma_{-1}({{D}_{J}}^{-1})](x_0)|_{|\xi'|=1}\\
&=\frac{\xi_n^2}{8(\xi_n-i)^4(\xi_n+i)^2}h'(0)\sum_{l,\gamma,\mu,\beta=1}^{n}\sum_{\nu,i=1}^{n-1}{\rm tr}[\xi_{i}a_{l}^{n}a_{\gamma}^{n}a_{\nu}^{\mu}a_{\beta}^{i}c(dx_{l})c(dx_{\mu})c(dx_{n})c(dx_{\nu})c(dx_{\gamma})c(dx_{\beta})]\nonumber\\
&-\frac{\xi_n(1-\xi_n^2)}{16(\xi_n-i)^4(\xi_n+i)^2}h'(0)\sum_{l,\gamma,\mu,\beta=1}^{n}\sum_{\nu=1}^{n-1}{\rm tr}[a_{l}^{n}a_{\gamma}^{n}a_{\nu}^{\mu}a_{\beta}^{n}c(dx_{l})c(dx_{\mu})c(dx_{n})c(dx_{\nu})c(dx_{\gamma})c(dx_{\beta})]\nonumber\\
&+\frac{\xi_n}{8(\xi_n-i)^4(\xi_n+i)^2}h'(0)\sum_{l,\gamma,\mu,\beta=1}^{n}\sum_{q,\nu,i=1}^{n-1}{\rm tr}[\xi_{q}\xi_{i}a_{l}^{q}a_{\gamma}^{n}a_{\nu}^{\mu}a_{\beta}^{i}c(dx_{l})c(dx_{\mu})c(dx_{n})c(dx_{\nu})c(dx_{\gamma})c(dx_{\beta})]\nonumber\\
&-\frac{1-\xi_n^2}{16(\xi_n-i)^4(\xi_n+i)^2}h'(0)\sum_{l,\gamma,\mu,\beta=1}^{n}\sum_{q,\nu=1}^{n-1}{\rm tr}[\xi_{q}a_{l}^{q}a_{\gamma}^{n}a_{\nu}^{\mu}a_{\beta}^{n}c(dx_{l})c(dx_{\mu})c(dx_{n})c(dx_{\nu})c(dx_{\gamma})c(dx_{\beta})]\nonumber\\
&+\frac{\xi_n}{8(\xi_n-i)^4(\xi_n+i)^2}h'(0)\sum_{l,\gamma,\mu,\beta=1}^{n}\sum_{\alpha,\nu,i=1}^{n-1}{\rm tr}[\xi_{\alpha}\xi_{i}a_{l}^{n}a_{\gamma}^{\alpha}a_{\nu}^{\mu}a_{\beta}^{i}c(dx_{l})c(dx_{\mu})c(dx_{n})c(dx_{\nu})c(dx_{\gamma})c(dx_{\beta})]\nonumber\\
&-\frac{1-\xi_n^2}{16(\xi_n-i)^4(\xi_n+i)^2}h'(0)\sum_{l,\gamma,\mu,\beta=1}^{n}\sum_{\alpha,\nu=1}^{n-1}{\rm tr}[\xi_{\alpha}a_{l}^{n}a_{\gamma}^{\alpha}a_{\nu}^{\mu}a_{\beta}^{n}c(dx_{l})c(dx_{\mu})c(dx_{n})c(dx_{\nu})c(dx_{\gamma})c(dx_{\beta})]\nonumber
\end{align}
\begin{align}
&-\frac{i\xi_n(i\xi_n+2)}{8(\xi_n-i)^4(\xi_n+i)^2}h'(0)\sum_{l,\gamma,\mu,\beta=1}^{n}\sum_{q,\alpha,\nu,i=1}^{n-1}{\rm tr}[\xi_{q}\xi_{\alpha}\xi_{i}a_{l}^{q}a_{\gamma}^{\alpha}a_{\nu}^{\mu}a_{\beta}^{i}c(dx_{l})c(dx_{\mu})c(dx_{n})c(dx_{\nu})c(dx_{\gamma})c(dx_{\beta})]\nonumber\\
&+\frac{i(i\xi_n+2)(1-\xi_n^2)}{16(\xi_n-i)^4(\xi_n+i)^2}h'(0)\sum_{l,\gamma,\mu,\beta=1}^{n}\sum_{q,\alpha,\nu=1}^{n-1}{\rm tr}[\xi_{q}\xi_{\alpha}a_{l}^{q}a_{\gamma}^{\alpha}a_{\nu}^{\mu}a_{\beta}^{n}c(dx_{l})c(dx_{\mu})c(dx_{n})c(dx_{\nu})c(dx_{\gamma})c(dx_{\beta})].\nonumber
\end{align}
By the relation of the Clifford action and ${\rm tr}{AB}={\rm tr}{BA}$,  we have the equality:\\
\begin{align}
&\sum_{l,\gamma,\mu,\beta=1}^{n}\sum_{\nu=1}^{n-1}{\rm tr}[c(dx_{l})c(dx_{\mu})c(dx_{n})c(dx_{\nu})c(dx_{\gamma})c(dx_{\beta})]\\
&=\sum_{l=1}^{n}\sum_{\nu=1}^{n-1}\delta_{\beta}^{l}\delta_{\nu}^{\mu}\delta_{\gamma}^{n}{\rm tr}[\texttt{id}]-\sum_{l=1}^{n}\sum_{\nu=1}^{n-1}\delta_{\beta}^{l}\delta_{n}^{\mu}\delta_{\gamma}^{\nu}{\rm tr}[\texttt{id}]-\sum_{l=1}^{n}\sum_{\nu=1}^{n-1}\delta_{\gamma}^{l}\delta_{\nu}^{\mu}\delta_{\beta}^{n}{\rm tr}[\texttt{id}]\nonumber\\
&+\sum_{l=1}^{n}\sum_{\nu=1}^{n-1}\delta_{\gamma}^{l}\delta_{n}^{\mu}\delta_{\beta}^{\nu}{\rm tr}[\texttt{id}]-\sum_{\mu=1}^{n}\sum_{\nu=1}^{n-1}\delta_{\nu}^{l}\delta_{\beta}^{\mu}\delta_{\gamma}^{n}{\rm tr}[\texttt{id}]+\sum_{\mu=1}^{n}\sum_{\nu=1}^{n-1}\delta_{\nu}^{l}\delta_{\gamma}^{\mu}\delta_{\beta}^{n}{\rm tr}[\texttt{id}]\nonumber\\
&-\sum_{\gamma=1}^{n}\sum_{\nu=1}^{n-1}\delta_{\nu}^{l}\delta_{n}^{\mu}\delta_{\beta}^{\gamma}{\rm tr}[\texttt{id}]+\sum_{\mu=1}^{n}\sum_{\nu=1}^{n-1}\delta_{n}^{l}\delta_{\beta}^{\mu}\delta_{\gamma}^{\nu}{\rm tr}[\texttt{id}]-\sum_{\mu=1}^{n}\sum_{\nu=1}^{n-1}\delta_{n}^{l}\delta_{\gamma}^{\mu}\delta_{\beta}^{\nu}{\rm tr}[\texttt{id}]\nonumber\\
&+\sum_{\gamma=1}^{n}\sum_{\nu=1}^{n-1}\delta_{n}^{l}\delta_{\nu}^{\mu}\delta_{\beta}^{\gamma}{\rm tr}[\texttt{id}]-\sum_{l=1}^{n}\sum_{\nu=1}^{n-1}\delta_{\mu}^{l}\delta_{\beta}^{n}\delta_{\gamma}^{\nu}{\rm tr}[\texttt{id}]+\sum_{l=1}^{n}\sum_{\nu=1}^{n-1}\delta_{\mu}^{l}\delta_{\gamma}^{n}\delta_{\beta}^{\nu}{\rm tr}[\texttt{id}].\nonumber
\end{align}
By $\int_{|\xi'|=1}{\{\xi_{i_1}\cdot\cdot\cdot\xi_{i_{2d+1}}}\}\sigma(\xi')=0,$  $\int_{|\xi'|=1}\xi_i\xi_j=\frac{4\pi}{3}\delta_i^j,$ then we have
\begin{align}
&-i\int_{|\xi'|=1}\int^{+\infty}_{-\infty}{\rm trace} [\pi^+_{\xi_n}(A_1(x_0)) \times \partial_{\xi_n}\sigma_{-1}({{D}_{J}}^{-1})](x_0)d\xi_n\sigma(\xi')dx'\\
&=\Omega_3\int_{\Gamma^{+}}\frac{i\xi_n(1-\xi_n^2)}{16(\xi_n-i)^4(\xi_n+i)^2}h'(0)\sum_{l=1}^{n}\sum_{\nu=1}^{n-1}(-(a_{\nu}^{n})^2(a_{l}^{n})^2+(a_{l}^{n})^2a_{\nu}^{\nu}a_{n}^{n}){\rm tr}[\texttt{id}]d\xi_ndx'\nonumber\\
&+\frac{8\pi}{3}\Omega_3\int_{\Gamma^{+}}\frac{-i\xi_n}{8(\xi_n-i)^4(\xi_n+i)^2}h'(0)\sum_{l=1}^{n}\sum_{\nu,i=1}^{n-1}(-(a_{\nu}^{n})^2(a_{l}^{i})^2+(a_{l}^{i})^2a_{\nu}^{\nu}a_{n}^{n}){\rm tr}[\texttt{id}]d\xi_ndx'\nonumber\\
&+\frac{4\pi}{3}\Omega_3\int_{\Gamma^{+}}\frac{(i\xi_n+2)(1-\xi_n^2)}{16(\xi_n-i)^4(\xi_n+i)^2}h'(0)\sum_{l=1}^{n}\sum_{\nu,i=1}^{n-1}((a_{\nu}^{n})^2(a_{l}^{i})^2-(a_{l}^{i})^2a_{\nu}^{\nu}a_{n}^{n}-2a_{\nu}^{i}a_{l}^{i}a_{\nu}^{n}a_{l}^{n}+2a_{i}^{i}a_{\nu}^{\nu})\nonumber\\
&{\rm tr}[\texttt{id}]d\xi_ndx'.\nonumber
\end{align}
It follows immediately that
\begin{align}
&-i\int_{|\xi'|=1}\int^{+\infty}_{-\infty}{\rm trace} [\pi^+_{\xi_n}(A_1(x_0)) \times \partial_{\xi_n}\sigma_{-1}({{D}_{J}}^{-1})](x_0)d\xi_n\sigma(\xi')dx'\\
&=\sum_{l=1}^{n}\sum_{\nu=1}^{n-1}(-(a_{\nu}^{n})^2(a_{l}^{n})^2+(a_{l}^{n})^2a_{\nu}^{\nu}a_{n}^{n}){\rm tr}[\texttt{id}]\Omega_3h'(0)\frac{2\pi i}{3!}\Big[\frac{i\xi_n(1-\xi_n^2)}{16(\xi_n+i)^2}\Big]^{(3)}|_{\xi_n=i}dx'\nonumber\\
&+\sum_{l=1}^{n}\sum_{\nu,i=1}^{n-1}(-(a_{\nu}^{n})^2(a_{l}^{i})^2+(a_{l}^{i})^2a_{\nu}^{\nu}a_{n}^{n}){\rm tr}[\texttt{id}]\Omega_3h'(0)\frac{8\pi}{3}\frac{2\pi i}{3!}\Big[\frac{-i\xi_n}{8(\xi_n+i)^2}\Big]^{(3)}|_{\xi_n=i}dx'\nonumber\\
&+\sum_{l=1}^{n}\sum_{\nu,i=1}^{n-1}((a_{\nu}^{n})^2(a_{l}^{i})^2-(a_{l}^{i})^2a_{\nu}^{\nu}a_{n}^{n}-2a_{\nu}^{i}a_{l}^{i}a_{\nu}^{n}a_{l}^{n}+2a_{i}^{i}a_{\nu}^{\nu}){\rm tr}[\texttt{id}]\Omega_3h'(0)\frac{4\pi}{3}\frac{2\pi i}{3!}\nonumber\\
&\Big[\frac{(i\xi_n+2)(1-\xi_n^2)}{16(\xi_n+i)^2}\Big]^{(3)}|_{\xi_n=i}dx'\nonumber\\
&=\sum_{l=1}^{n}\sum_{\nu,i=1}^{n-1}(-2(a_{\nu}^{n})^2(a_{l}^{i})^2+2(a_{l}^{i})^2a_{\nu}^{\nu}a_{n}^{n}+2a_{\nu}^{i}a_{l}^{i}a_{\nu}^{n}a_{l}^{n}-2a_{i}^{i}a_{\nu}^{\nu}){\rm tr}[\texttt{id}]\Omega_3h'(0)(\frac{\pi^2}{24})dx'.\nonumber
\end{align}
Therefore
\begin{align}
\pi^+_{\xi_n}(A_2(x_0))&=\pi^+_{\xi_n}\Big(\frac{\sum_{q,l,j,p,h,y=1}^{n}\xi_{q}\xi_{p}a_{l}^{q}a_{y}^{j}\partial_{x_j}(a_{h}^{p})c(dx_{l})c(dx_{y})c(dx_{h})}{(1+\xi_n^2)^2}\Big)\\
&+\pi^+_{\xi_n}\Big(\frac{\frac{1}{2}h'(0)\sum_{q,l,p,z=1}^{n}\sum_{h=1}^{n-1}\xi_{q}\xi_{p}a_{l}^{q}a_{h}^{p}a_{z}^{n}c(dx_{l})c(dx_{z})c(dx_{h})}{(1+\xi_n^2)^2}\Big)\nonumber\\
&=-\frac{i\xi_n}{4(\xi_n-i)^2}\sum_{l,j,h,y=1}^{n}a_{l}^{n}a_{y}^{j}\partial_{x_j}(a_{h}^{n})c(dx_{l})c(dx_{y})c(dx_{h})\nonumber\\
&-\frac{i}{4(\xi_n-i)^2}\sum_{l,j,h,y=1}^{n}\sum_{q=1}^{n-1}\xi_{q}a_{l}^{q}a_{y}^{j}\partial_{x_j}(a_{h}^{n})c(dx_{l})c(dx_{y})c(dx_{h})\nonumber\\
&-\frac{i}{4(\xi_n-i)^2}\sum_{l,j,h,y=1}^{n}\sum_{p=1}^{n-1}\xi_{p}a_{l}^{n}a_{y}^{j}\partial_{x_j}(a_{h}^{p})c(dx_{l})c(dx_{y})c(dx_{h})\nonumber\\
&-\frac{i\xi_n+2}{4(\xi_n-i)^2}\sum_{l,j,h,y=1}^{n}\sum_{q,p=1}^{n-1}\xi_{q}\xi_{p}a_{l}^{q}a_{y}^{j}\partial_{x_j}(a_{h}^{p})c(dx_{l})c(dx_{y})c(dx_{h})\nonumber\\
&-\frac{i\xi_n}{8(\xi_n-i)^2}h'(0)\sum_{l,z=1}^{n}\sum_{h=1}^{n-1}a_{l}^{n}a_{h}^{n}a_{z}^{n}c(dx_{l})c(dx_{z})c(dx_{h})\nonumber
\end{align}
\begin{align}
&-\frac{i}{8(\xi_n-i)^2}h'(0)\sum_{l,z=1}^{n}\sum_{q,h=1}^{n-1}\xi_{q}a_{l}^{q}a_{h}^{n}a_{z}^{n}c(dx_{l})c(dx_{z})c(dx_{h})\nonumber\\
&-\frac{i}{8(\xi_n-i)^2}h'(0)\sum_{l,z=1}^{n}\sum_{p,h=1}^{n-1}\xi_{p}a_{l}^{n}a_{h}^{p}a_{z}^{n}c(dx_{l})c(dx_{z})c(dx_{h})\nonumber\\
&-\frac{i\xi_n+2}{8(\xi_n-i)^2}h'(0)\sum_{l,z=1}^{n}\sum_{q,p,h=1}^{n-1}\xi_{q}\xi_{p}a_{l}^{q}a_{h}^{p}a_{z}^{n}c(dx_{l})c(dx_{z})c(dx_{h}).\nonumber
\end{align}
Clearly,
\begin{align}
&{\rm trace} [\pi^+_{\xi_n}(A_2(x_0)) \times \partial_{\xi_n}\sigma_{-1}({{D}_{J}}^{-1})](x_0)|_{|\xi'|=1}\\
&=-\frac{\xi_n^2}{2(\xi_n-i)^4(\xi_n+i)^2}\sum_{l,j,h,y,\beta=1}^{n}\sum_{i=1}^{n-1}{\rm tr}[\xi_{i}a_{l}^{n}a_{y}^{j}a_{\beta}^{i}\partial_{x_j}(a_{h}^{n})c(dx_{l})c(dx_{y})c(dx_{h})c(dx_{\beta})]\nonumber\\
&+\frac{\xi_n(1-\xi_n^2)}{4(\xi_n-i)^4(\xi_n+i)^2}\sum_{l,j,h,y,\beta=1}^{n}{\rm tr}[a_{l}^{n}a_{y}^{j}a_{\beta}^{n}\partial_{x_j}(a_{h}^{n})c(dx_{l})c(dx_{y})c(dx_{h})c(dx_{\beta})]\nonumber\\
&-\frac{\xi_n}{2(\xi_n-i)^4(\xi_n+i)^2}\sum_{l,j,h,y,\beta=1}^{n}\sum_{q,i=1}^{n-1}{\rm tr}[\xi_{q}\xi_{i}a_{l}^{q}a_{y}^{j}a_{\beta}^{i}\partial_{x_j}(a_{h}^{n})c(dx_{l})c(dx_{y})c(dx_{h})c(dx_{\beta})]\nonumber\\
&+\frac{1-\xi_n^2}{4(\xi_n-i)^4(\xi_n+i)^2}\sum_{l,j,h,y,\beta=1}^{n}\sum_{q=1}^{n-1}{\rm tr}[\xi_{q}a_{l}^{q}a_{y}^{j}a_{\beta}^{n}\partial_{x_j}(a_{h}^{n})c(dx_{l})c(dx_{y})c(dx_{h})c(dx_{\beta})]\nonumber\\
&-\frac{\xi_n}{2(\xi_n-i)^4(\xi_n+i)^2}\sum_{l,j,h,y,\beta=1}^{n}\sum_{p,i=1}^{n-1}{\rm tr}[\xi_{p}\xi_{i}a_{l}^{n}a_{y}^{j}a_{\beta}^{i}\partial_{x_j}(a_{h}^{p})c(dx_{l})c(dx_{y})c(dx_{h})c(dx_{\beta})]\nonumber\\
&+\frac{1-\xi_n^2}{4(\xi_n-i)^4(\xi_n+i)^2}\sum_{l,j,h,y,\beta=1}^{n}\sum_{p=1}^{n-1}{\rm tr}[\xi_{p}a_{l}^{n}a_{y}^{j}a_{\beta}^{n}\partial_{x_j}(a_{h}^{p})c(dx_{l})c(dx_{y})c(dx_{h})c(dx_{\beta})]\nonumber\\
&+\frac{i\xi_n(i\xi_n+2)}{2(\xi_n-i)^4(\xi_n+i)^2}\sum_{l,j,h,y,\beta=1}^{n}\sum_{q,p,i=1}^{n-1}{\rm tr}[\xi_{q}\xi_{p}\xi_{i}a_{l}^{q}a_{y}^{j}a_{\beta}^{i}\partial_{x_j}(a_{h}^{p})c(dx_{l})c(dx_{y})c(dx_{h})c(dx_{\beta})]\nonumber\\
&-\frac{i(i\xi_n+2)(1-\xi_n^2)}{4(\xi_n-i)^4(\xi_n+i)^2}\sum_{l,j,h,y,\beta=1}^{n}\sum_{q,p=1}^{n-1}{\rm tr}[\xi_{q}\xi_{p}a_{l}^{q}a_{y}^{j}a_{\beta}^{n}\partial_{x_j}(a_{h}^{p})c(dx_{l})c(dx_{y})c(dx_{h})c(dx_{\beta})]\nonumber
\end{align}
\begin{align}
&-\frac{\xi_n^2}{4(\xi_n-i)^4(\xi_n+i)^2}h'(0)\sum_{l,z,\beta=1}^{n}\sum_{h,i=1}^{n-1}{\rm tr}[\xi_{i}a_{l}^{n}a_{h}^{n}a_{z}^{n}a_{\beta}^{i}c(dx_{l})c(dx_{z})c(dx_{h})c(dx_{\beta})]\nonumber\\
&+\frac{\xi_n(1-\xi_n^2)}{8(\xi_n-i)^4(\xi_n+i)^2}h'(0)\sum_{l,z,\beta=1}^{n}\sum_{h=1}^{n-1}{\rm tr}[a_{l}^{n}a_{h}^{n}a_{z}^{n}a_{\beta}^{n}c(dx_{l})c(dx_{z})c(dx_{h})c(dx_{\beta})]\nonumber\\
&-\frac{\xi_n}{4(\xi_n-i)^4(\xi_n+i)^2}h'(0)\sum_{l,z,\beta=1}^{n}\sum_{q,h,i=1}^{n-1}{\rm tr}[\xi_{q}\xi_{i}a_{l}^{q}a_{h}^{n}a_{z}^{n}a_{\beta}^{i}c(dx_{l})c(dx_{z})c(dx_{h})c(dx_{\beta})]\nonumber\\
&+\frac{1-\xi_n^2}{8(\xi_n-i)^4(\xi_n+i)^2}h'(0)\sum_{l,z,\beta=1}^{n}\sum_{q,h=1}^{n-1}{\rm tr}[\xi_{q}a_{l}^{q}a_{h}^{n}a_{z}^{n}a_{\beta}^{n}c(dx_{l})c(dx_{z})c(dx_{h})c(dx_{\beta})]\nonumber\\
&-\frac{\xi_n}{4(\xi_n-i)^4(\xi_n+i)^2}h'(0)\sum_{l,z,\beta=1}^{n}\sum_{p,h,i=1}^{n-1}{\rm tr}[\xi_{q}\xi_{i}a_{l}^{n}a_{h}^{p}a_{z}^{n}a_{\beta}^{i}c(dx_{l})c(dx_{z})c(dx_{h})c(dx_{\beta})]\nonumber\\
&+\frac{1-\xi_n^2}{8(\xi_n-i)^4(\xi_n+i)^2}h'(0)\sum_{l,z,\beta=1}^{n}\sum_{p,h=1}^{n-1}{\rm tr}[\xi_{p}a_{l}^{n}a_{h}^{p}a_{z}^{n}a_{\beta}^{n}c(dx_{l})c(dx_{z})c(dx_{h})c(dx_{\beta})]\nonumber\\
&+\frac{i\xi_n(i\xi_n+2)}{4(\xi_n-i)^4(\xi_n+i)^2}h'(0)\sum_{l,z,\beta=1}^{n}\sum_{q,p,h,i=1}^{n-1}{\rm tr}[\xi_{q}\xi_{p}\xi_{i}a_{l}^{q}a_{h}^{p}a_{z}^{n}a_{\beta}^{i}c(dx_{l})c(dx_{z})c(dx_{h})c(dx_{\beta})]\nonumber\\
&-\frac{i(i\xi_n+2)(1-\xi_n^2)}{8(\xi_n-i)^4(\xi_n+i)^2}h'(0)\sum_{l,z,\beta=1}^{n}\sum_{q,p,h=1}^{n-1}{\rm tr}[\xi_{q}\xi_{p}a_{l}^{q}a_{h}^{p}a_{z}^{n}a_{\beta}^{n}c(dx_{l})c(dx_{z})c(dx_{h})c(dx_{\beta})].\nonumber
\end{align}
Similarly, we have
\begin{align}
&\sum_{l,y,h,\beta=1}^{n}{\rm tr}[c(dx_{l})c(dx_{y})c(dx_{h})c(dx_{\beta})]=\sum_{\beta,y=1}^{n}\delta_{\beta}^{l}\delta_{h}^{y}{\rm tr}[\texttt{id}]-\sum_{\beta,l=1}^{n}\delta_{h}^{l}\delta_{\beta}^{y}{\rm tr}[\texttt{id}]+\sum_{\beta,l=1}^{n}\delta_{y}^{l}\delta_{\beta}^{h}{\rm tr}[\texttt{id}],
\end{align}
\begin{align}
&\sum_{l,z,\beta=1}^{n}\sum_{h=1}^{n-1}{\rm tr}[c(dx_{l})c(dx_{z})c(dx_{h})c(dx_{\beta})]=\sum_{\beta=1}^{n}\sum_{z=1}^{n-1}\delta_{\beta}^{l}\delta_{h}^{z}{\rm tr}[\texttt{id}]-\sum_{\beta=1}^{n}\sum_{l=1}^{n-1}\delta_{h}^{l}\delta_{\beta}^{z}{\rm tr}[\texttt{id}]+\sum_{l=1}^{n}\sum_{\beta=1}^{n-1}\delta_{z}^{l}\delta_{\beta}^{h}{\rm tr}[\texttt{id}].
\end{align}
Then
\begin{align}
&-i\int_{|\xi'|=1}\int^{+\infty}_{-\infty}{\rm trace} [\pi^+_{\xi_n}(A_2(x_0)) \times \partial_{\xi_n}\sigma_{-1}({{D}_{J}}^{-1})](x_0)d\xi_n\sigma(\xi')dx'
\end{align}
\begin{align}
&=\Omega_3\int_{\Gamma^{+}}\frac{-i\xi_n(1-\xi_n^2)}{4(\xi_n-i)^4(\xi_n+i)^2}\sum_{l,j,\beta=1}^{n}\left((a_{\beta}^{n})^2a_{l}^{j}\partial_{x_j}(a_{l}^{n})-a_{l}^{n}a_{\beta}^{j}a_{\beta}^{n}\partial_{x_j}(a_{l}^{n})+a_{l}^{n}a_{l}^{j}a_{\beta}^{n}\partial_{x_j}(a_{\beta}^{n})\right)\nonumber\\
&{\rm tr}[\texttt{id}]d\xi_ndx'\nonumber\\
&+\frac{4\pi}{3}\Omega_3\int_{\Gamma^{+}}\frac{i\xi_n}{2(\xi_n-i)^4(\xi_n+i)^2}\sum_{l,j,\beta=1}^{n}\sum_{i=1}^{n-1}\left((a_{\beta}^{i})^2a_{l}^{j}\partial_{x_j}(a_{l}^{n})-a_{l}^{i}a_{\beta}^{j}a_{\beta}^{i}\partial_{x_j}(a_{l}^{n})+a_{l}^{i}a_{l}^{j}a_{\beta}^{i}\partial_{x_j}(a_{\beta}^{n})\right)\nonumber\\
&{\rm tr}[\texttt{id}]d\xi_ndx'\nonumber\\
&+\frac{4\pi}{3}\Omega_3\int_{\Gamma^{+}}\frac{i\xi_n}{2(\xi_n-i)^4(\xi_n+i)^2}\sum_{l,j,\beta=1}^{n}\sum_{i=1}^{n-1}\left(a_{\beta}^{n}a_{l}^{j}a_{\beta}^{i}\partial_{x_j}(a_{l}^{i})-a_{l}^{n}a_{\beta}^{j}a_{\beta}^{i}\partial_{x_j}(a_{l}^{i})+a_{l}^{n}a_{l}^{j}a_{\beta}^{i}\partial_{x_j}(a_{\beta}^{i})\right)\nonumber\\
&{\rm tr}[\texttt{id}]d\xi_ndx'\nonumber\\
&+\frac{4\pi}{3}\Omega_3\int_{\Gamma^{+}}\frac{-(i\xi_n+2)(1-\xi_n^2)}{4(\xi_n-i)^4(\xi_n+i)^2}\sum_{l,j,\beta=1}^{n}\sum_{i=1}^{n-1}\left(a_{\beta}^{i}a_{l}^{j}a_{\beta}^{n}\partial_{x_j}(a_{l}^{i})-a_{l}^{i}a_{\beta}^{j}a_{\beta}^{n}\partial_{x_j}(a_{l}^{i})+a_{l}^{i}a_{l}^{j}a_{\beta}^{n}\partial_{x_j}(a_{\beta}^{i})\right)\nonumber\\
&{\rm tr}[\texttt{id}]d\xi_ndx'\nonumber\\
&+\Omega_3\int_{\Gamma^{+}}\frac{-i\xi_n(1-\xi_n^2)}{8(\xi_n-i)^4(\xi_n+i)^2}h'(0)\sum_{l=1}^{n}\sum_{\nu=1}^{n-1}(a_{\nu}^{n})^2(a_{l}^{n})^2{\rm tr}[\texttt{id}]d\xi_ndx'\nonumber\\
&+\frac{4\pi}{3}\Omega_3\int_{\Gamma^{+}}\frac{i\xi_n}{4(\xi_n-i)^4(\xi_n+i)^2}h'(0)\sum_{l=1}^{n}\sum_{\nu,i=1}^{n-1}(a_{\nu}^{n})^2(a_{l}^{i})^2{\rm tr}[\texttt{id}]d\xi_ndx'\nonumber\\
&+\frac{4\pi}{3}\Omega_3\int_{\Gamma^{+}}\frac{i\xi_n}{4(\xi_n-i)^4(\xi_n+i)^2}h'(0)\sum_{l=1}^{n}\sum_{\nu,i=1}^{n-1}(a_{\nu}^{i})^2(a_{l}^{n})^2{\rm tr}[\texttt{id}]d\xi_ndx'\nonumber\\
&+\frac{4\pi}{3}\Omega_3\int_{\Gamma^{+}}\frac{-(i\xi_n+2)(1-\xi_n^2)}{8(\xi_n-i)^4(\xi_n+i)^2}h'(0)\sum_{l=1}^{n}\sum_{\nu,i=1}^{n-1}\left(2a_{\nu}^{i}a_{l}^{i}a_{\nu}^{n}a_{l}^{n}-(a_{\nu}^{i})^2(a_{l}^{n})^2\right){\rm tr}[\texttt{id}]d\xi_ndx'.\nonumber
\end{align}
A simple calculation shows that
\begin{align}
&-i\int_{|\xi'|=1}\int^{+\infty}_{-\infty}{\rm trace} [\pi^+_{\xi_n}(A_2(x_0)) \times \partial_{\xi_n}\sigma_{-1}({{D}_{J}}^{-1})](x_0)d\xi_n\sigma(\xi')dx'\\
&=\sum_{l,j,\beta=1}^{n}\sum_{i=1}^{n-1}\left((a_{\beta}^{i})^2a_{l}^{j}\partial_{x_j}(a_{l}^{n})-a_{l}^{i}a_{\beta}^{j}a_{\beta}^{i}\partial_{x_j}(a_{l}^{n})+a_{l}^{i}a_{l}^{j}a_{\beta}^{i}\partial_{x_j}(a_{\beta}^{n})\right){\rm tr}[\texttt{id}]\Omega_3(-\frac{\pi^2}{12})dx'\nonumber\\
&+\sum_{l,j,\beta=1}^{n}\sum_{i=1}^{n-1}\left(a_{\beta}^{n}a_{l}^{j}a_{\beta}^{i}\partial_{x_j}(a_{l}^{i})-a_{l}^{n}a_{\beta}^{j}a_{\beta}^{i}\partial_{x_j}(a_{l}^{i})+a_{l}^{n}a_{l}^{j}a_{\beta}^{i}\partial_{x_j}(a_{\beta}^{i})\right){\rm tr}[\texttt{id}]\Omega_3(-\frac{\pi^2}{12})dx'\nonumber\\
&+\sum_{l,j,\beta=1}^{n}\sum_{i=1}^{n-1}\left(a_{\beta}^{i}a_{l}^{j}a_{\beta}^{n}\partial_{x_j}(a_{l}^{i})-a_{l}^{i}a_{\beta}^{j}a_{\beta}^{n}\partial_{x_j}(a_{l}^{i})+a_{l}^{i}a_{l}^{j}a_{\beta}^{n}\partial_{x_j}(a_{\beta}^{i})\right){\rm tr}[\texttt{id}]\Omega_3(\frac{\pi^2}{6})dx'\nonumber
\end{align}
\begin{align}
&+\sum_{l=1}^{n}\sum_{\nu,i=1}^{n-1}(a_{\nu}^{n})^2(a_{l}^{i})^2{\rm tr}[\texttt{id}]\Omega_3h'(0)(-\frac{\pi^2}{24})dx'+\sum_{l=1}^{n}\sum_{\nu,i=1}^{n-1}(a_{\nu}^{i})^2(a_{l}^{n})^2{\rm tr}[\texttt{id}]\Omega_3h'(0)(-\frac{\pi^2}{24})dx'\nonumber\\
&+\sum_{l=1}^{n}\sum_{\nu,i=1}^{n-1}\left(2a_{\nu}^{i}a_{l}^{i}a_{\nu}^{n}a_{l}^{n}-(a_{\nu}^{i})^2(a_{l}^{n})^2\right){\rm tr}[\texttt{id}]\Omega_3h'(0)(\frac{\pi^2}{12})dx'.\nonumber
\end{align}
Since
\begin{align}
-h'(0)\pi^+_{\xi_n}(A_3(x_0))&=-h'(0)\pi^+_{\xi_n}\Big(\frac{\sum_{q,l,w,\alpha,\gamma=1}^{n}\xi_{q}\xi_{\alpha}a_{l}^{q}a_{w}^{n}a_{\gamma}^{\alpha}c(dx_{l})c(dx_{w})c(dx_{\gamma})}{(1+\xi_n^2)^3}\Big)\\
&=\frac{3\xi_n+i\xi_n^2}{16(\xi_n-i)^3}h'(0)\sum_{l,w,\gamma=1}^{n}a_{l}^{n}a_{w}^{n}a_{\gamma}^{n}c(dx_{l})c(dx_{w})c(dx_{\gamma})\nonumber\\
&+\frac{i\xi_n+3}{16(\xi_n-i)^3}h'(0)\sum_{l,w,\gamma=1}^{n}\sum_{q=1}^{n-1}\xi_{q}a_{l}^{q}a_{w}^{n}a_{\gamma}^{n}c(dx_{l})c(dx_{w})c(dx_{\gamma})\nonumber\\
&+\frac{i\xi_n+3}{16(\xi_n-i)^3}h'(0)\sum_{l,w,\gamma=1}^{n}\sum_{\alpha=1}^{n-1}\xi_{\alpha}a_{l}^{n}a_{w}^{n}a_{\gamma}^{\alpha}c(dx_{l})c(dx_{w})c(dx_{\gamma})\nonumber\\
&+\frac{-8i+9\xi_n+3i\xi_n^2}{16(\xi_n-i)^3}h'(0)\sum_{l,w,\gamma=1}^{n}\sum_{q,\alpha=1}^{n-1}\xi_{q}\xi_{\alpha}a_{l}^{q}a_{w}^{n}a_{\gamma}^{\alpha}c(dx_{l})c(dx_{w})c(dx_{\gamma}),\nonumber
\end{align}
\begin{align}
&{\rm trace} [-h'(0)\pi^+_{\xi_n}(A_3(x_0)) \times \partial_{\xi_n}\sigma_{-1}({{D}_{J}}^{-1})](x_0)|_{|\xi'|=1}\\
&=-\frac{i\xi_n(3\xi_n+i\xi_n^2)}{8(\xi_n-i)^5(\xi_n+i)^2}h'(0)\sum_{l,w,\gamma,\beta=1}^{n}\sum_{i=1}^{n-1}{\rm tr}[\xi_{i}a_{l}^{n}a_{w}^{n}a_{\gamma}^{n}a_{\beta}^{i}c(dx_{l})c(dx_{w})c(dx_{\gamma})c(dx_{\beta})]\nonumber\\
&+\frac{i(3\xi_n+i\xi_n^2)(1-\xi_n^2)}{16(\xi_n-i)^5(\xi_n+i)^2}h'(0)\sum_{l,w,\gamma,\beta=1}^{n}{\rm tr}[a_{l}^{n}a_{w}^{n}a_{\gamma}^{n}a_{\beta}^{n}c(dx_{l})c(dx_{w})c(dx_{\gamma})c(dx_{\beta})]\nonumber\\
&-\frac{i\xi_n(i\xi_n+3)}{8(\xi_n-i)^5(\xi_n+i)^2}h'(0)\sum_{l,w,\gamma,\beta=1}^{n}\sum_{q,i=1}^{n-1}{\rm tr}[\xi_{q}\xi_{i}a_{l}^{q}a_{w}^{n}a_{\gamma}^{n}a_{\beta}^{i}c(dx_{l})c(dx_{w})c(dx_{\gamma})c(dx_{\beta})]\nonumber\\
&+\frac{i(i\xi_n+3)(1-\xi_n^2)}{16(\xi_n-i)^5(\xi_n+i)^2}h'(0)\sum_{l,w,\gamma,\beta=1}^{n}\sum_{q=1}^{n-1}{\rm tr}[\xi_{q}a_{l}^{q}a_{w}^{n}a_{\gamma}^{n}a_{\beta}^{n}c(dx_{l})c(dx_{w})c(dx_{\gamma})c(dx_{\beta})]\nonumber\\
&-\frac{i\xi_n(i\xi_n+3)}{8(\xi_n-i)^5(\xi_n+i)^2}h'(0)\sum_{l,w,\gamma,\beta=1}^{n}\sum_{\alpha,i=1}^{n-1}{\rm tr}[\xi_{\alpha}\xi_{i}a_{l}^{n}a_{w}^{n}a_{\gamma}^{\alpha}a_{\beta}^{i}c(dx_{l})c(dx_{w})c(dx_{\gamma})c(dx_{\beta})]\nonumber
\end{align}
\begin{align}
&+\frac{i(i\xi_n+3)(1-\xi_n^2)}{16(\xi_n-i)^5(\xi_n+i)^2}h'(0)\sum_{l,w,\gamma,\beta=1}^{n}\sum_{\alpha=1}^{n-1}{\rm tr}[\xi_{\alpha}a_{l}^{n}a_{w}^{n}a_{\gamma}^{\alpha}a_{\beta}^{n}c(dx_{l})c(dx_{w})c(dx_{\gamma})c(dx_{\beta})]\nonumber\\
&-\frac{i\xi_n(-8i+9\xi_n+3i\xi_n^2)}{8(\xi_n-i)^5(\xi_n+i)^2}h'(0)\sum_{l,w,\gamma,\beta=1}^{n}\sum_{q,\alpha,i=1}^{n-1}{\rm tr}[\xi_{q}\xi_{\alpha}\xi_{i}a_{l}^{q}a_{w}^{n}a_{\gamma}^{\alpha}a_{\beta}^{i}c(dx_{l})c(dx_{w})c(dx_{\gamma})c(dx_{\beta})]\nonumber\\
&+\frac{i(1-\xi_n^2)(-8i+9\xi_n+3i\xi_n^2)}{16(\xi_n-i)^5(\xi_n+i)^2}h'(0)\sum_{l,w,\gamma,\beta=1}^{n}\sum_{q,\alpha=1}^{n-1}{\rm tr}[\xi_{q}\xi_{\alpha}a_{l}^{q}a_{w}^{n}a_{\gamma}^{\alpha}a_{\beta}^{n}c(dx_{l})c(dx_{w})c(dx_{\gamma})c(dx_{\beta})],\nonumber
\end{align}
then, we have
\begin{align}
&-i\int_{|\xi'|=1}\int^{+\infty}_{-\infty}{\rm trace} [-h'(0)\pi^+_{\xi_n}(A_3(x_0)) \times \partial_{\xi_n}\sigma_{-1}({{D}_{J}}^{-1})](x_0)d\xi_n\sigma(\xi')dx'\\
&=\Omega_3\int_{\Gamma^{+}}\frac{(3\xi_n+i\xi_n^2)(1-\xi_n^2)}{16(\xi_n-i)^5(\xi_n+i)^2}h'(0)\sum_{\beta,l=1}^{n}(a_{\beta}^{n})^2(a_{l}^{n})^2{\rm tr}[\texttt{id}]d\xi_ndx'\nonumber\\
&+\frac{8\pi}{3}\Omega_3\int_{\Gamma^{+}}\frac{-\xi_n(i\xi_n+3)}{8(\xi_n-i)^5(\xi_n+i)^2}h'(0)\sum_{\beta,l=1}^{n}\sum_{i=1}^{n-1}(a_{\beta}^{i})^2(a_{l}^{n})^2{\rm tr}[\texttt{id}]d\xi_ndx'\nonumber\\
&+\frac{4\pi}{3}\Omega_3\int_{\Gamma^{+}}\frac{(-8i+9\xi_n+3i\xi_n^2)(1-\xi_n^2)}{16(\xi_n-i)^5(\xi_n+i)^2}h'(0)\sum_{\beta,l=1}^{n}\sum_{i=1}^{n-1}\left(2a_{l}^{i}a_{\beta}^{i}a_{l}^{n}a_{\beta}^{n}-(a_{l}^{i})^2(a_{\beta}^{n})^2\right){\rm tr}[\texttt{id}]d\xi_ndx'\nonumber\\
&=\sum_{\beta,l=1}^{n}(a_{\beta}^{n})^2(a_{l}^{n})^2{\rm tr}[\texttt{id}]\Omega_3h'(0)(-\frac{\pi}{128})dx'+\sum_{\beta,l=1}^{n}\sum_{i=1}^{n-1}(a_{\beta}^{i})^2(a_{l}^{n})^2{\rm tr}[\texttt{id}]\Omega_3h'(0)(\frac{5\pi^2}{48})dx'\nonumber\\
&+\sum_{\beta,l=1}^{n}\sum_{i=1}^{n-1}\left(2a_{l}^{i}a_{\beta}^{i}a_{l}^{n}a_{\beta}^{n}-(a_{l}^{i})^2(a_{\beta}^{n})^2\right){\rm tr}[\texttt{id}]\Omega_3h'(0)(-\frac{5\pi^2}{32})dx'.\nonumber
\end{align}
Hence, we have
\begin{align}
\Psi_4&=\sum_{l=1}^{n}\sum_{\nu,i=1}^{n-1}(-2(a_{\nu}^{n})^2(a_{l}^{i})^2+2(a_{l}^{i})^2a_{\nu}^{\nu}a_{n}^{n}+2a_{\nu}^{i}a_{l}^{i}a_{\nu}^{n}a_{l}^{n}-2a_{i}^{i}a_{\nu}^{\nu}){\rm tr}[\texttt{id}]\Omega_3h'(0)(\frac{\pi^2}{24})dx'\\
&+\sum_{l,j,\beta=1}^{n}\sum_{i=1}^{n-1}\left((a_{\beta}^{i})^2a_{l}^{j}\partial_{x_j}(a_{l}^{n})-a_{l}^{i}a_{\beta}^{j}a_{\beta}^{i}\partial_{x_j}(a_{l}^{n})+a_{l}^{i}a_{l}^{j}a_{\beta}^{i}\partial_{x_j}(a_{\beta}^{n})\right){\rm tr}[\texttt{id}]\Omega_3(-\frac{\pi^2}{12})dx'\nonumber
\end{align}
\begin{align}
&+\sum_{l,j,\beta=1}^{n}\sum_{i=1}^{n-1}\left(a_{\beta}^{n}a_{l}^{j}a_{\beta}^{i}\partial_{x_j}(a_{l}^{i})-a_{l}^{n}a_{\beta}^{j}a_{\beta}^{i}\partial_{x_j}(a_{l}^{i})+a_{l}^{n}a_{l}^{j}a_{\beta}^{i}\partial_{x_j}(a_{\beta}^{i})\right){\rm tr}[\texttt{id}]\Omega_3(-\frac{\pi^2}{12})dx'\nonumber\\
&+\sum_{l,j,\beta=1}^{n}\sum_{i=1}^{n-1}\left(a_{\beta}^{i}a_{l}^{j}a_{\beta}^{n}\partial_{x_j}(a_{l}^{i})-a_{l}^{i}a_{\beta}^{j}a_{\beta}^{n}\partial_{x_j}(a_{l}^{i})+a_{l}^{i}a_{l}^{j}a_{\beta}^{n}\partial_{x_j}(a_{\beta}^{i})\right){\rm tr}[\texttt{id}]\Omega_3(\frac{\pi^2}{6})dx'\nonumber\\
&+\sum_{l=1}^{n}\sum_{\nu,i=1}^{n-1}(a_{\nu}^{n})^2(a_{l}^{i})^2{\rm tr}[\texttt{id}]\Omega_3h'(0)(-\frac{\pi^2}{24})dx'+\sum_{l=1}^{n}\sum_{\nu,i=1}^{n-1}(a_{\nu}^{i})^2(a_{l}^{n})^2{\rm tr}[\texttt{id}]\Omega_3h'(0)(-\frac{\pi^2}{24})dx'\nonumber\\
&+\sum_{l=1}^{n}\sum_{\nu,i=1}^{n-1}\left(2a_{\nu}^{i}a_{l}^{i}a_{\nu}^{n}a_{l}^{n}-(a_{\nu}^{i})^2(a_{l}^{n})^2\right){\rm tr}[\texttt{id}]\Omega_3h'(0)(\frac{\pi^2}{12})dx'\nonumber\\
&+\sum_{\beta,l=1}^{n}(a_{\beta}^{n})^2(a_{l}^{n})^2{\rm tr}[\texttt{id}]\Omega_3h'(0)(-\frac{\pi}{128})dx'+\sum_{\beta,l=1}^{n}\sum_{i=1}^{n-1}(a_{\beta}^{i})^2(a_{l}^{n})^2{\rm tr}[\texttt{id}]\Omega_3h'(0)(\frac{5\pi^2}{48})dx'\nonumber\\
&+\sum_{\beta,l=1}^{n}\sum_{i=1}^{n-1}\left(2a_{l}^{i}a_{\beta}^{i}a_{l}^{n}a_{\beta}^{n}-(a_{l}^{i})^2(a_{\beta}^{n})^2\right){\rm tr}[\texttt{id}]\Omega_3h'(0)(-\frac{5\pi^2}{32})dx'.\nonumber
\end{align}

\noindent {\bf  case c)}~$r=-1,~l=-2,~k=j=|\alpha|=0$\\

\noindent We calculate
\begin{align}
\Psi_5=-i\int_{|\xi'|=1}\int^{+\infty}_{-\infty}{\rm trace} [\pi^+_{\xi_n}\sigma_{-1}({{D}_{J}}^{-1})\times
\partial_{\xi_n}\sigma_{-2}({{D}_{J}}^{-1})](x_0)d\xi_n\sigma(\xi')dx'.
\end{align}
It is evident that
\begin{align}
\pi^+_{\xi_n}\left(\frac{ic[J(\xi)]}{|\xi|^2}\right)(x_0)
=\pi^+_{\xi_n}\left(\frac{i\sum^{n}_{i,\beta=1}\xi_{i}a_{\beta}^{i}c(dx_{\beta})}{|\xi|^2}\right)(x_0),
\end{align}
\begin{align}
\pi^+_{\xi_n}\left(\frac{ic[J(\xi)]}{|\xi|^2}\right)(x_0)|_{|\xi'|=1}
=\frac{1}{2(\xi_{n}-i)}\sum^{n}_{\beta=1}\sum^{n-1}_{i=1}\xi_{i}a_{\beta}^{i}c(dx_{\beta})+\frac{i}{2(\xi_{n}-i)}\sum^{n}_{\beta=1}a_{\beta}^{n}c(dx_{\beta}).
\end{align}
Since
\begin{align}
&\partial_{\xi_n}\sigma_{-2}({{D}_{J}}^{-1})(x_0)|_{|\xi'|=1}=\partial_{\xi_n}\Big(\frac{c[J(\xi)]\sigma_{0}({D}_{J})(x_0)c[J(\xi)]}{(1+\xi_n^2)^2}\Big)-h'(0)\partial_{\xi_n}\Big(\frac{c[J(\xi)]}{(1+\xi_n^2)^3}c[J(dx_n)]c[J(\xi)]\Big)\\
&+\partial_{\xi_n}\Big(\frac{c[J(\xi)]}{(1+\xi_n^2)^2}\Big[\sum_{j,p,h=1}^{n}\xi_p\partial_{x_j}(a_{h}^{p})c[J(dx_j)]c(dx_h)+\sum_{p=1}^{n}\sum_{h=1}^{n-1}\xi_pa_{h}^{p}c[J(dx_n)]\partial_{x_n}(c(dx_h))\Big]\Big),\nonumber
\end{align}
then
\begin{align}
&\partial_{\xi_n}\sigma_{-2}({{D}_{J}}^{-1})(x_0)|_{|\xi'|=1}=\partial_{\xi_n}(A_1(x_0))+\partial_{\xi_n}(A_2(x_0))-h'(0)\partial_{\xi_n}(A_3(x_0)).
\end{align}
By computation, we have
\begin{align}
\partial_{\xi_n}(A_1(x_0))&=\partial_{\xi_n}\Big(\frac{\sum_{q,\alpha,l,\gamma=1}^{n}\xi_{q}\xi_{\alpha}a_{l}^{q}a_{\gamma}^{\alpha}c(dx_l)\sigma_{0}({D}_{J})(x_0)c(dx_{\gamma})}{(1+\xi_n^2)^2}\Big)\\
&=-\frac{2\xi_n(-1+\xi_n^2)}{(1+\xi_n^2)^3}\sum_{l,\gamma=1}^{n}a_{l}^{n}a_{\gamma}^{n}c(dx_l)\sigma_{0}({D}_{J})(x_0)c(dx_{\gamma})\nonumber\\
&+\frac{1-3\xi_n^2}{(1+\xi_n^2)^3}\sum_{l,\gamma=1}^{n}\sum_{q=1}^{n-1}\xi_{q}a_{l}^{q}a_{\gamma}^{n}c(dx_l)\sigma_{0}({D}_{J})(x_0)c(dx_{\gamma})\nonumber\\
&+\frac{1-3\xi_n^2}{(1+\xi_n^2)^3}\sum_{l,\gamma=1}^{n}\sum_{\alpha=1}^{n-1}\xi_{\alpha}a_{l}^{n}a_{\gamma}^{\alpha}c(dx_l)\sigma_{0}({D}_{J})(x_0)c(dx_{\gamma})\nonumber\\
&-\frac{4\xi_n}{(1+\xi_n^2)^3}\sum_{l,\gamma=1}^{n}\sum_{q,\alpha=1}^{n-1}\xi_{q}\xi_{\alpha}a_{l}^{q}a_{\gamma}^{\alpha}c(dx_l)\sigma_{0}({D}_{J})(x_0)c(dx_{\gamma}).\nonumber
\end{align}
By (3.92) and (3.95), we have
\begin{align}
&{\rm trace} [\pi^+_{\xi_n}\sigma_{-1}({{D}_{J}}^{-1}) \times \partial_{\xi_n}(A_1(x_0))](x_0)|_{|\xi'|=1}\\
&=\frac{\xi_n}{2(\xi_n-i)^4(\xi_n+i)^3}h'(0)\sum_{\beta,l,\gamma,\mu=1}^{n}\sum_{i,q,\alpha,\nu=1}^{n-1}{\rm tr}[\xi_{i}\xi_{q}\xi_{\alpha}a_{\beta}^{i}a_{l}^{q}a_{\gamma}^{\alpha}a_{\nu}^{\mu}c(dx_{\beta})c(dx_{l})c(dx_{\mu})c(dx_{n})c(dx_{\nu})c(dx_{\gamma})]\nonumber\\
&+\frac{i\xi_n}{2(\xi_n-i)^4(\xi_n+i)^3}h'(0)\sum_{\beta,l,\gamma,\mu=1}^{n}\sum_{q,\alpha,\nu=1}^{n-1}{\rm tr}[\xi_{q}\xi_{\alpha}a_{\beta}^{n}a_{l}^{q}a_{\gamma}^{\alpha}a_{\nu}^{\mu}c(dx_{\beta})c(dx_{l})c(dx_{\mu})c(dx_{n})c(dx_{\nu})c(dx_{\gamma})]\nonumber\\
&-\frac{1-3\xi_n^2}{8(\xi_n-i)^4(\xi_n+i)^3}h'(0)\sum_{\beta,l,\gamma,\mu=1}^{n}\sum_{i,\alpha,\nu=1}^{n-1}{\rm tr}[\xi_{i}\xi_{\alpha}a_{\beta}^{i}a_{l}^{n}a_{\gamma}^{\alpha}a_{\nu}^{\mu}c(dx_{\beta})c(dx_{l})c(dx_{\mu})c(dx_{n})c(dx_{\nu})c(dx_{\gamma})]\nonumber\\
&-\frac{i(1-3\xi_n^2)}{8(\xi_n-i)^4(\xi_n+i)^3}h'(0)\sum_{\beta,l,\gamma,\mu=1}^{n}\sum_{\alpha,\nu=1}^{n-1}{\rm tr}[\xi_{\alpha}a_{\beta}^{n}a_{l}^{n}a_{\gamma}^{\alpha}a_{\nu}^{\mu}c(dx_{\beta})c(dx_{l})c(dx_{\mu})c(dx_{n})c(dx_{\nu})c(dx_{\gamma})]\nonumber\\
&-\frac{1-3\xi_n^2}{8(\xi_n-i)^4(\xi_n+i)^3}h'(0)\sum_{\beta,l,\gamma,\mu=1}^{n}\sum_{i,q,\nu=1}^{n-1}{\rm tr}[\xi_{i}\xi_{q}a_{\beta}^{i}a_{l}^{q}a_{\gamma}^{n}a_{\nu}^{\mu}c(dx_{\beta})c(dx_{l})c(dx_{\mu})c(dx_{n})c(dx_{\nu})c(dx_{\gamma})]\nonumber\\
&-\frac{i(1-3\xi_n^2)}{8(\xi_n-i)^4(\xi_n+i)^3}h'(0)\sum_{\beta,l,\gamma,\mu=1}^{n}\sum_{q,\nu=1}^{n-1}{\rm tr}[\xi_{q}a_{\beta}^{n}a_{l}^{q}a_{\gamma}^{n}a_{\nu}^{\mu}c(dx_{\beta})c(dx_{l})c(dx_{\mu})c(dx_{n})c(dx_{\nu})c(dx_{\gamma})]\nonumber
\end{align}
\begin{align}
&+\frac{\xi_n(-1+\xi_n^2)}{4(\xi_n-i)^4(\xi_n+i)^3}h'(0)\sum_{\beta,l,\gamma,\mu=1}^{n}\sum_{i,\nu=1}^{n-1}{\rm tr}[\xi_{i}a_{\beta}^{i}a_{l}^{n}a_{\gamma}^{n}a_{\nu}^{\mu}c(dx_{\beta})c(dx_{l})c(dx_{\mu})c(dx_{n})c(dx_{\nu})c(dx_{\gamma})]\nonumber\\
&+\frac{i\xi_n(-1+\xi_n^2)}{4(\xi_n-i)^4(\xi_n+i)^3}h'(0)\sum_{\beta,l,\gamma,\mu=1}^{n}\sum_{\nu=1}^{n-1}{\rm tr}[a_{\beta}^{n}a_{l}^{n}a_{\gamma}^{n}a_{\nu}^{\mu}c(dx_{\beta})c(dx_{l})c(dx_{\mu})c(dx_{n})c(dx_{\nu})c(dx_{\gamma})],\nonumber
\end{align}
it is shown that
\begin{align}
&-i\int_{|\xi'|=1}\int^{+\infty}_{-\infty}{\rm trace} [\pi^+_{\xi_n}\sigma_{-1}({{D}_{J}}^{-1}) \times \partial_{\xi_n}(A_1(x_0))](x_0)d\xi_n\sigma(\xi')dx'\\
&=\Omega_3\int_{\Gamma^{+}}\frac{\xi_n(-1+\xi_n^2)}{4(\xi_n-i)^4(\xi_n+i)^3}h'(0)\sum_{l=1}^{n}\sum_{\nu=1}^{n-1}(-(a_{\nu}^{n})^2(a_{l}^{n})^2+(a_{l}^{n})^2a_{\nu}^{\nu}a_{n}^{n}){\rm tr}[\texttt{id}]d\xi_ndx'\nonumber\\
&+\frac{8\pi}{3}\Omega_3\int_{\Gamma^{+}}\frac{i(1-3\xi_n^2)}{8(\xi_n-i)^4(\xi_n+i)^3}h'(0)\sum_{l=1}^{n}\sum_{\nu,i=1}^{n-1}(-(a_{\nu}^{n})^2(a_{l}^{i})^2+(a_{l}^{i})^2a_{\nu}^{\nu}a_{n}^{n}){\rm tr}[\texttt{id}]d\xi_ndx'\nonumber\\
&+\frac{4\pi}{3}\Omega_3\int_{\Gamma^{+}}\frac{\xi_n}{2(\xi_n-i)^4(\xi_n+i)^3}h'(0)\sum_{l=1}^{n}\sum_{\nu,i=1}^{n-1}((a_{\nu}^{n})^2(a_{l}^{i})^2-(a_{l}^{i})^2a_{\nu}^{\nu}a_{n}^{n}-2a_{\nu}^{i}a_{l}^{i}a_{\nu}^{n}a_{l}^{n}+2a_{i}^{i}a_{\nu}^{\nu})\nonumber\\
&{\rm tr}[\texttt{id}]d\xi_ndx'\nonumber\\
&=\sum_{l=1}^{n}\sum_{\nu,i=1}^{n-1}(-2(a_{\nu}^{n})^2(a_{l}^{i})^2+2(a_{l}^{i})^2a_{\nu}^{\nu}a_{n}^{n}+2a_{\nu}^{i}a_{l}^{i}a_{\nu}^{n}a_{l}^{n}-2a_{i}^{i}a_{\nu}^{\nu}){\rm tr}[\texttt{id}]\Omega_3h'(0)(-\frac{\pi^2}{24})dx'.\nonumber
\end{align}
Similarly to (3.95) and (3.96), we have
\begin{align}
\partial_{\xi_n}(A_2(x_0))&=\partial_{\xi_n}\Big(\frac{\sum_{q,l,j,p,h,y=1}^{n}\xi_{q}\xi_{p}a_{l}^{q}a_{y}^{j}\partial_{x_j}(a_{h}^{p})c(dx_{l})c(dx_{y})c(dx_{h})}{(1+\xi_n^2)^2}\Big)\\
&+\partial_{\xi_n}\Big(\frac{\frac{1}{2}h'(0)\sum_{q,l,p,z=1}^{n}\sum_{h=1}^{n-1}\xi_{q}\xi_{p}a_{l}^{q}a_{h}^{p}a_{z}^{n}c(dx_{l})c(dx_{z})c(dx_{h})}{(1+\xi_n^2)^2}\Big)\nonumber\\
&=-\frac{2\xi_n(-1+\xi_n^2)}{(1+\xi_n^2)^3}\sum_{l,j,h,y=1}^{n}a_{l}^{n}a_{y}^{j}\partial_{x_j}(a_{h}^{n})c(dx_{l})c(dx_{y})c(dx_{h})\nonumber\\
&+\frac{1-3\xi_n^2}{(1+\xi_n^2)^3}\sum_{l,j,h,y=1}^{n}\sum_{q=1}^{n-1}\xi_{q}a_{l}^{q}a_{y}^{j}\partial_{x_j}(a_{h}^{n})c(dx_{l})c(dx_{y})c(dx_{h})\nonumber\\
&+\frac{1-3\xi_n^2}{(1+\xi_n^2)^3}\sum_{l,j,h,y=1}^{n}\sum_{p=1}^{n-1}\xi_{p}a_{l}^{n}a_{y}^{j}\partial_{x_j}(a_{h}^{p})c(dx_{l})c(dx_{y})c(dx_{h})\nonumber
\end{align}
\begin{align}
&-\frac{4\xi_n}{(1+\xi_n^2)^3}\sum_{l,j,h,y=1}^{n}\sum_{q,p=1}^{n-1}\xi_{q}\xi_{p}a_{l}^{q}a_{y}^{j}\partial_{x_j}(a_{h}^{p})c(dx_{l})c(dx_{y})c(dx_{h})\nonumber\\
&-\frac{\xi_n(-1+\xi_n^2)}{(1+\xi_n^2)^3}h'(0)\sum_{l,z=1}^{n}\sum_{h=1}^{n-1}a_{l}^{n}a_{h}^{n}a_{z}^{n}c(dx_{l})c(dx_{z})c(dx_{h})\nonumber\\
&+\frac{1-3\xi_n^2}{2(1+\xi_n^2)^3}h'(0)\sum_{l,z=1}^{n}\sum_{q,h=1}^{n-1}\xi_{q}a_{l}^{q}a_{h}^{n}a_{z}^{n}c(dx_{l})c(dx_{z})c(dx_{h})\nonumber\\
&+\frac{1-3\xi_n^2}{2(1+\xi_n^2)^3}h'(0)\sum_{l,z=1}^{n}\sum_{p,h=1}^{n-1}\xi_{p}a_{l}^{n}a_{h}^{p}a_{z}^{n}c(dx_{l})c(dx_{z})c(dx_{h})\nonumber\\
&-\frac{2\xi_n}{(1+\xi_n^2)^3}h'(0)\sum_{l,z=1}^{n}\sum_{q,p,h=1}^{n-1}\xi_{q}\xi_{p}a_{l}^{q}a_{h}^{p}a_{z}^{n}c(dx_{l})c(dx_{z})c(dx_{h}),\nonumber
\end{align}
\begin{align}
&{\rm trace} [\pi^+_{\xi_n}\sigma_{-1}({{D}_{J}}^{-1}) \times \partial_{\xi_n}(A_2(x_0))](x_0)|_{|\xi'|=1}\\
&=-\frac{\xi_n(-1+\xi_n^2)}{(\xi_n-i)^4(\xi_n+i)^3}\sum_{\beta,l,j,h,y=1}^{n}\sum_{i=1}^{n-1}{\rm tr}[\xi_{i}a_{\beta}^{i}a_{l}^{n}a_{y}^{j}\partial_{x_j}(a_{h}^{n})c(dx_{\beta})c(dx_{l})c(dx_{y})c(dx_{h})]\nonumber\\
&-\frac{i\xi_n(-1+\xi_n^2)}{(\xi_n-i)^4(\xi_n+i)^3}\sum_{\beta,l,j,h,y=1}^{n}{\rm tr}[a_{\beta}^{n}a_{l}^{n}a_{y}^{j}\partial_{x_j}(a_{h}^{n})c(dx_{\beta})c(dx_{l})c(dx_{y})c(dx_{h})]\nonumber\\
&+\frac{1-3\xi_n^2}{2(\xi_n-i)^4(\xi_n+i)^3}\sum_{\beta,l,j,h,y=1}^{n}\sum_{i,q=1}^{n-1}{\rm tr}[\xi_{i}\xi_{q}a_{\beta}^{i}a_{l}^{q}a_{y}^{j}\partial_{x_j}(a_{h}^{n})c(dx_{\beta})c(dx_{l})c(dx_{y})c(dx_{h})]\nonumber\\
&+\frac{i(1-3\xi_n^2)}{2(\xi_n-i)^4(\xi_n+i)^3}\sum_{\beta,l,j,h,y=1}^{n}\sum_{q=1}^{n-1}{\rm tr}[\xi_{q}a_{\beta}^{n}a_{l}^{q}a_{y}^{j}\partial_{x_j}(a_{h}^{n})c(dx_{\beta})c(dx_{l})c(dx_{y})c(dx_{h})]\nonumber\\
&+\frac{1-3\xi_n^2}{2(\xi_n-i)^4(\xi_n+i)^3}\sum_{\beta,l,j,h,y=1}^{n}\sum_{i,p=1}^{n-1}{\rm tr}[\xi_{i}\xi_{p}a_{\beta}^{i}a_{l}^{n}a_{y}^{j}\partial_{x_j}(a_{h}^{p})c(dx_{\beta})c(dx_{l})c(dx_{y})c(dx_{h})]\nonumber\\
&+\frac{i(1-3\xi_n^2)}{2(\xi_n-i)^4(\xi_n+i)^3}\sum_{\beta,l,j,h,y=1}^{n}\sum_{p=1}^{n-1}{\rm tr}[\xi_{p}a_{\beta}^{n}a_{l}^{n}a_{y}^{j}\partial_{x_j}(a_{h}^{p})c(dx_{\beta})c(dx_{l})c(dx_{y})c(dx_{h})]\nonumber\\
&-\frac{2\xi_n}{(\xi_n-i)^4(\xi_n+i)^3}\sum_{\beta,l,j,h,y=1}^{n}\sum_{i,q,p=1}^{n-1}{\rm tr}[\xi_{i}\xi_{q}\xi_{p}a_{\beta}^{i}a_{l}^{q}a_{y}^{j}\partial_{x_j}(a_{h}^{p})c(dx_{\beta})c(dx_{l})c(dx_{y})c(dx_{h})]\nonumber
\end{align}
\begin{align}
&-\frac{2i\xi_n}{(\xi_n-i)^4(\xi_n+i)^3}\sum_{\beta,l,j,h,y=1}^{n}\sum_{q,p=1}^{n-1}{\rm tr}[\xi_{q}\xi_{p}a_{\beta}^{n}a_{l}^{q}a_{y}^{j}\partial_{x_j}(a_{h}^{p})c(dx_{\beta})c(dx_{l})c(dx_{y})c(dx_{h})]\nonumber\\
&-\frac{\xi_n(-1+\xi_n^2)}{2(\xi_n-i)^4(\xi_n+i)^3}h'(0)\sum_{\beta,l,z=1}^{n}\sum_{i,h=1}^{n-1}{\rm tr}[\xi_{i}a_{\beta}^{i}a_{l}^{n}a_{h}^{n}a_{z}^{n}c(dx_{\beta})c(dx_{l})c(dx_{z})c(dx_{h})]\nonumber\\
&-\frac{i\xi_n(-1+\xi_n^2)}{2(\xi_n-i)^4(\xi_n+i)^3}h'(0)\sum_{\beta,l,z=1}^{n}\sum_{h=1}^{n-1}{\rm tr}[a_{\beta}^{n}a_{l}^{n}a_{h}^{n}a_{z}^{n}c(dx_{\beta})c(dx_{l})c(dx_{z})c(dx_{h})]\nonumber\\
&+\frac{1-3\xi_n^2}{4(\xi_n-i)^4(\xi_n+i)^3}h'(0)\sum_{\beta,l,z=1}^{n}\sum_{i,q,h=1}^{n-1}{\rm tr}[\xi_{i}\xi_{q}a_{\beta}^{i}a_{l}^{q}a_{h}^{n}a_{z}^{n}c(dx_{\beta})c(dx_{l})c(dx_{z})c(dx_{h})]\nonumber\\
&+\frac{i(1-3\xi_n^2)}{4(\xi_n-i)^4(\xi_n+i)^3}h'(0)\sum_{\beta,l,z=1}^{n}\sum_{q,h=1}^{n-1}{\rm tr}[\xi_{q}a_{\beta}^{n}a_{l}^{q}a_{h}^{n}a_{z}^{n}c(dx_{\beta})c(dx_{l})c(dx_{z})c(dx_{h})]\nonumber\\
&+\frac{1-3\xi_n^2}{4(\xi_n-i)^4(\xi_n+i)^3}h'(0)\sum_{\beta,l,z=1}^{n}\sum_{i,p,h=1}^{n-1}{\rm tr}[\xi_{i}\xi_{p}a_{\beta}^{i}a_{l}^{n}a_{h}^{p}a_{z}^{n}c(dx_{\beta})c(dx_{l})c(dx_{z})c(dx_{h})]\nonumber\\
&+\frac{i(1-3\xi_n^2)}{4(\xi_n-i)^4(\xi_n+i)^3}h'(0)\sum_{\beta,l,z=1}^{n}\sum_{p,h=1}^{n-1}{\rm tr}[\xi_{p}a_{\beta}^{n}a_{l}^{n}a_{h}^{p}a_{z}^{n}c(dx_{\beta})c(dx_{l})c(dx_{z})c(dx_{h})]\nonumber\\
&-\frac{\xi_n}{(\xi_n-i)^4(\xi_n+i)^3}h'(0)\sum_{\beta,l,z=1}^{n}\sum_{i,q,p,h=1}^{n-1}{\rm tr}[\xi_{i}\xi_{q}\xi_{p}a_{\beta}^{i}a_{l}^{q}a_{h}^{p}a_{z}^{n}c(dx_{\beta})c(dx_{l})c(dx_{z})c(dx_{h})]\nonumber\\
&-\frac{i\xi_n}{(\xi_n-i)^4(\xi_n+i)^3}h'(0)\sum_{\beta,l,z=1}^{n}\sum_{q,p,h=1}^{n-1}{\rm tr}[\xi_{q}\xi_{p}a_{\beta}^{n}a_{l}^{q}a_{h}^{p}a_{z}^{n}c(dx_{\beta})c(dx_{l})c(dx_{z})c(dx_{h})],\nonumber
\end{align}
then
\begin{align}
&-i\int_{|\xi'|=1}\int^{+\infty}_{-\infty}{\rm trace} [\pi^+_{\xi_n}\sigma_{-1}({{D}_{J}}^{-1}) \times \partial_{\xi_n}(A_2(x_0))](x_0)d\xi_n\sigma(\xi')dx'\\
&=\Omega_3\int_{\Gamma^{+}}\frac{-\xi_n(-1+\xi_n^2)}{(\xi_n-i)^4(\xi_n+i)^3}\sum_{\beta,l,j=1}^{n}\left((a_{\beta}^{n})^2a_{l}^{j}\partial_{x_j}(a_{l}^{n})-a_{l}^{n}a_{\beta}^{j}a_{\beta}^{n}\partial_{x_j}(a_{l}^{n})+a_{l}^{n}a_{l}^{j}a_{\beta}^{n}\partial_{x_j}(a_{\beta}^{n})\right)\nonumber\\
&{\rm tr}[\texttt{id}]d\xi_ndx'\nonumber
\end{align}
\begin{align}
&+\frac{4\pi}{3}\Omega_3\int_{\Gamma^{+}}\frac{-i(1-3\xi_n^2)}{2(\xi_n-i)^4(\xi_n+i)^3}\sum_{\beta,l,j=1}^{n}\sum_{i=1}^{n-1}\left((a_{\beta}^{i})^2a_{l}^{j}\partial_{x_j}(a_{l}^{n})-a_{l}^{i}a_{\beta}^{j}a_{\beta}^{i}\partial_{x_j}(a_{l}^{n})+a_{l}^{i}a_{l}^{j}a_{\beta}^{i}\partial_{x_j}(a_{\beta}^{n})\right)\nonumber\\
&{\rm tr}[\texttt{id}]d\xi_ndx'\nonumber\\
&+\frac{4\pi}{3}\Omega_3\int_{\Gamma^{+}}\frac{-i(1-3\xi_n^2)}{2(\xi_n-i)^4(\xi_n+i)^3}\sum_{\beta,l,j=1}^{n}\sum_{i=1}^{n-1}\left(a_{\beta}^{n}a_{l}^{j}a_{\beta}^{i}\partial_{x_j}(a_{l}^{i})-a_{l}^{n}a_{\beta}^{j}a_{\beta}^{i}\partial_{x_j}(a_{l}^{i})+a_{l}^{n}a_{l}^{j}a_{\beta}^{i}\partial_{x_j}(a_{\beta}^{i})\right)\nonumber\\
&{\rm tr}[\texttt{id}]d\xi_ndx'\nonumber\\
&+\frac{4\pi}{3}\Omega_3\int_{\Gamma^{+}}\frac{-2\xi_n}{(\xi_n-i)^4(\xi_n+i)^3}\sum_{\beta,l,j=1}^{n}\sum_{i=1}^{n-1}\left(a_{\beta}^{i}a_{l}^{j}a_{\beta}^{n}\partial_{x_j}(a_{l}^{i})-a_{l}^{i}a_{\beta}^{j}a_{\beta}^{n}\partial_{x_j}(a_{l}^{i})+a_{l}^{i}a_{l}^{j}a_{\beta}^{n}\partial_{x_j}(a_{\beta}^{i})\right)\nonumber\\
&{\rm tr}[\texttt{id}]d\xi_ndx'\nonumber\\
&+\Omega_3\int_{\Gamma^{+}}\frac{-\xi_n(-1+\xi_n^2)}{2(\xi_n-i)^4(\xi_n+i)^3}h'(0)\sum_{l=1}^{n}\sum_{\nu=1}^{n-1}(a_{\nu}^{n})^2(a_{l}^{n})^2{\rm tr}[\texttt{id}]d\xi_ndx'\nonumber\\
&+\frac{4\pi}{3}\Omega_3\int_{\Gamma^{+}}\frac{-i(1-3\xi_n^2)}{4(\xi_n-i)^4(\xi_n+i)^3}h'(0)\sum_{l=1}^{n}\sum_{\nu,i=1}^{n-1}(a_{\nu}^{n})^2(a_{l}^{i})^2{\rm tr}[\texttt{id}]d\xi_ndx'\nonumber\\
&+\frac{4\pi}{3}\Omega_3\int_{\Gamma^{+}}\frac{-i(1-3\xi_n^2)}{4(\xi_n-i)^4(\xi_n+i)^3}h'(0)\sum_{l=1}^{n}\sum_{\nu,i=1}^{n-1}(a_{\nu}^{i})^2(a_{l}^{n})^2{\rm tr}[\texttt{id}]d\xi_ndx'\nonumber\\
&+\frac{4\pi}{3}\Omega_3\int_{\Gamma^{+}}\frac{-\xi_n}{(\xi_n-i)^4(\xi_n+i)^3}h'(0)\sum_{l=1}^{n}\sum_{\nu,i=1}^{n-1}\left(2a_{\nu}^{i}a_{l}^{i}a_{\nu}^{n}a_{l}^{n}-(a_{\nu}^{i})^2(a_{l}^{n})^2\right){\rm tr}[\texttt{id}]d\xi_ndx'.\nonumber
\end{align}
We get
\begin{align}
&-i\int_{|\xi'|=1}\int^{+\infty}_{-\infty}{\rm trace} [\pi^+_{\xi_n}\sigma_{-1}({{D}_{J}}^{-1}) \times \partial_{\xi_n}(A_2(x_0))](x_0)d\xi_n\sigma(\xi')dx'\\
&=\sum_{l,j,\beta=1}^{n}\sum_{i=1}^{n-1}\left((a_{\beta}^{i})^2a_{l}^{j}\partial_{x_j}(a_{l}^{n})-a_{l}^{i}a_{\beta}^{j}a_{\beta}^{i}\partial_{x_j}(a_{l}^{n})+a_{l}^{i}a_{l}^{j}a_{\beta}^{i}\partial_{x_j}(a_{\beta}^{n})\right){\rm tr}[\texttt{id}]\Omega_3(\frac{\pi^2}{12})dx'\nonumber\\
&+\sum_{l,j,\beta=1}^{n}\sum_{i=1}^{n-1}\left(a_{\beta}^{n}a_{l}^{j}a_{\beta}^{i}\partial_{x_j}(a_{l}^{i})-a_{l}^{n}a_{\beta}^{j}a_{\beta}^{i}\partial_{x_j}(a_{l}^{i})+a_{l}^{n}a_{l}^{j}a_{\beta}^{i}\partial_{x_j}(a_{\beta}^{i})\right){\rm tr}[\texttt{id}]\Omega_3(\frac{\pi^2}{12})dx'\nonumber\\
&+\sum_{l,j,\beta=1}^{n}\sum_{i=1}^{n-1}\left(a_{\beta}^{i}a_{l}^{j}a_{\beta}^{n}\partial_{x_j}(a_{l}^{i})-a_{l}^{i}a_{\beta}^{j}a_{\beta}^{n}\partial_{x_j}(a_{l}^{i})+a_{l}^{i}a_{l}^{j}a_{\beta}^{n}\partial_{x_j}(a_{\beta}^{i})\right){\rm tr}[\texttt{id}]\Omega_3(-\frac{\pi^2}{6})dx'\nonumber
\end{align}
\begin{align}
&+\sum_{l=1}^{n}\sum_{\nu,i=1}^{n-1}(a_{\nu}^{n})^2(a_{l}^{i})^2{\rm tr}[\texttt{id}]\Omega_3h'(0)(\frac{\pi^2}{24})dx'+\sum_{l=1}^{n}\sum_{\nu,i=1}^{n-1}(a_{\nu}^{i})^2(a_{l}^{n})^2{\rm tr}[\texttt{id}]\Omega_3h'(0)(\frac{\pi^2}{24})dx'\nonumber\\
&+\sum_{l=1}^{n}\sum_{\nu,i=1}^{n-1}\left(2a_{\nu}^{i}a_{l}^{i}a_{\nu}^{n}a_{l}^{n}-(a_{\nu}^{i})^2(a_{l}^{n})^2\right){\rm tr}[\texttt{id}]\Omega_3h'(0)(-\frac{\pi^2}{12})dx'.\nonumber
\end{align}
Likewise,
\begin{align}
-h'(0)\partial_{\xi_n}(A_3(x_0))&=-h'(0)\partial_{\xi_n}\Big(\frac{\sum_{q,l,w,\alpha,\gamma=1}^{n}\xi_{q}\xi_{\alpha}a_{l}^{q}a_{w}^{n}a_{\gamma}^{\alpha}c(dx_{l})c(dx_{w})c(dx_{\gamma})}{(1+\xi_n^2)^3}\Big)\\
&=-\frac{2\xi_n-4\xi_n^3}{(1+\xi_n^2)^4}h'(0)\sum_{l,w,\gamma=1}^{n}a_{l}^{n}a_{w}^{n}a_{\gamma}^{n}c(dx_{l})c(dx_{w})c(dx_{\gamma})\nonumber\\
&-\frac{1-5\xi_n^2}{(1+\xi_n^2)^4}h'(0)\sum_{l,w,\gamma=1}^{n}\sum_{q=1}^{n-1}\xi_{q}a_{l}^{q}a_{w}^{n}a_{\gamma}^{n}c(dx_{l})c(dx_{w})c(dx_{\gamma})\nonumber\\
&-\frac{1-5\xi_n^2}{(1+\xi_n^2)^4}h'(0)\sum_{l,w,\gamma=1}^{n}\sum_{\alpha=1}^{n-1}\xi_{\alpha}a_{l}^{n}a_{w}^{n}a_{\gamma}^{\alpha}c(dx_{l})c(dx_{w})c(dx_{\gamma})\nonumber\\
&+\frac{6\xi_n}{(1+\xi_n^2)^4}h'(0)\sum_{l,w,\gamma=1}^{n}\sum_{q,\alpha=1}^{n-1}\xi_{q}\xi_{\alpha}a_{l}^{q}a_{w}^{n}a_{\gamma}^{\alpha}c(dx_{l})c(dx_{w})c(dx_{\gamma}).\nonumber
\end{align}
Observing (3.92) and (3.102), we have
\begin{align}
&{\rm trace} [\pi^+_{\xi_n}\sigma_{-1}({{D}_{J}}^{-1}) \times -h'(0)\partial_{\xi_n}(A_3(x_0))](x_0)|_{|\xi'|=1}\\
&=-\frac{\xi_n-2\xi_n^3}{(\xi_n-i)^5(\xi_n+i)^4}h'(0)\sum_{\beta,l,w,\gamma=1}^{n}\sum_{i=1}^{n-1}{\rm tr}[\xi_{i}a_{\beta}^{i}a_{l}^{n}a_{w}^{n}a_{\gamma}^{n}c(dx_{\beta})c(dx_{l})c(dx_{w})c(dx_{\gamma})]\nonumber\\
&-\frac{i(\xi_n-2\xi_n^3)}{(\xi_n-i)^5(\xi_n+i)^4}h'(0)\sum_{\beta,l,w,\gamma=1}^{n}{\rm tr}[a_{\beta}^{n}a_{l}^{n}a_{w}^{n}a_{\gamma}^{n}c(dx_{\beta})c(dx_{l})c(dx_{w})c(dx_{\gamma})]\nonumber\\
&-\frac{1-5\xi_n^2}{2(\xi_n-i)^5(\xi_n+i)^4}h'(0)\sum_{\beta,l,w,\gamma=1}^{n}\sum_{i,q=1}^{n-1}{\rm tr}[\xi_{i}\xi_{q}a_{\beta}^{i}a_{l}^{q}a_{w}^{n}a_{\gamma}^{n}c(dx_{\beta})c(dx_{l})c(dx_{w})c(dx_{\gamma})]\nonumber\\
&-\frac{i(1-5\xi_n^2)}{2(\xi_n-i)^5(\xi_n+i)^4}h'(0)\sum_{\beta,l,w,\gamma=1}^{n}\sum_{q=1}^{n-1}{\rm tr}[\xi_{q}a_{\beta}^{n}a_{l}^{q}a_{w}^{n}a_{\gamma}^{n}c(dx_{\beta})c(dx_{l})c(dx_{w})c(dx_{\gamma})]\nonumber
\end{align}
\begin{align}
&-\frac{1-5\xi_n^2}{2(\xi_n-i)^5(\xi_n+i)^4}h'(0)\sum_{\beta,l,w,\gamma=1}^{n}\sum_{i,\alpha=1}^{n-1}{\rm tr}[\xi_{i}\xi_{\alpha}a_{\beta}^{i}a_{l}^{n}a_{w}^{n}a_{\gamma}^{\alpha}c(dx_{\beta})c(dx_{l})c(dx_{w})c(dx_{\gamma})]\nonumber\\
&-\frac{i(1-5\xi_n^2)}{2(\xi_n-i)^5(\xi_n+i)^4}h'(0)\sum_{\beta,l,w,\gamma=1}^{n}\sum_{\alpha=1}^{n-1}{\rm tr}[\xi_{\alpha}a_{\beta}^{n}a_{l}^{n}a_{w}^{n}a_{\gamma}^{\alpha}c(dx_{\beta})c(dx_{l})c(dx_{w})c(dx_{\gamma})]\nonumber\\
&+\frac{3\xi_n}{(\xi_n-i)^5(\xi_n+i)^4}h'(0)\sum_{\beta,l,w,\gamma=1}^{n}\sum_{i,q,\alpha=1}^{n-1}{\rm tr}[\xi_{i}\xi_{q}\xi_{\alpha}a_{\beta}^{i}a_{l}^{q}a_{w}^{n}a_{\gamma}^{\alpha}c(dx_{\beta})c(dx_{l})c(dx_{w})c(dx_{\gamma})]\nonumber\\
&+\frac{3i\xi_n}{(\xi_n-i)^5(\xi_n+i)^4}h'(0)\sum_{\beta,l,w,\gamma=1}^{n}\sum_{q,\alpha=1}^{n-1}{\rm tr}[\xi_{q}\xi_{\alpha}a_{\beta}^{n}a_{l}^{q}a_{w}^{n}a_{\gamma}^{\alpha}c(dx_{\beta})c(dx_{l})c(dx_{w})c(dx_{\gamma})].\nonumber
\end{align}
By $\int_{|\xi'|=1}{\{\xi_{i_1}\cdot\cdot\cdot\xi_{i_{2d+1}}}\}\sigma(\xi')=0$ and (3.103), we have
\begin{align}
&-i\int_{|\xi'|=1}\int^{+\infty}_{-\infty}{\rm tr} [\pi^+_{\xi_n}\sigma_{-1}({{D}_{J}}^{-1}) \times -h'(0)\partial_{\xi_n}(A_3(x_0))](x_0)d\xi_n\sigma(\xi')dx'\\
&=\Omega_3\int_{\Gamma^{+}}\frac{-(\xi_n-2\xi_n^3)}{(\xi_n-i)^5(\xi_n+i)^4}h'(0)\sum_{\beta,l=1}^{n}(a_{\beta}^{n})^2(a_{l}^{n})^2{\rm tr}[\texttt{id}]d\xi_ndx'\nonumber\\
&+\frac{8\pi}{3}\Omega_3\int_{\Gamma^{+}}\frac{i(1-5\xi_n^2)}{2(\xi_n-i)^5(\xi_n+i)^4}h'(0)\sum_{\beta,l=1}^{n}\sum_{i=1}^{n-1}(a_{\beta}^{i})^2(a_{l}^{n})^2{\rm tr}[\texttt{id}]d\xi_ndx'\nonumber\\
&+\frac{4\pi}{3}\Omega_3\int_{\Gamma^{+}}\frac{3\xi_n}{(\xi_n-i)^5(\xi_n+i)^4}h'(0)\sum_{\beta,l=1}^{n}\sum_{i=1}^{n-1}\left(2a_{l}^{i}a_{\beta}^{i}a_{l}^{n}a_{\beta}^{n}-(a_{l}^{i})^2(a_{\beta}^{n})^2\right){\rm tr}[\texttt{id}]d\xi_ndx'\nonumber\\
&=\sum_{\beta,l=1}^{n}(a_{\beta}^{n})^2(a_{l}^{n})^2{\rm tr}[\texttt{id}]\Omega_3h'(0)(\frac{\pi}{128})dx'+\sum_{\beta,l=1}^{n}\sum_{i=1}^{n-1}(a_{\beta}^{i})^2(a_{l}^{n})^2{\rm tr}[\texttt{id}]\Omega_3h'(0)(-\frac{5\pi^2}{48})dx'\nonumber\\
&+\sum_{\beta,l=1}^{n}\sum_{i=1}^{n-1}\left(2a_{l}^{i}a_{\beta}^{i}a_{l}^{n}a_{\beta}^{n}-(a_{l}^{i})^2(a_{\beta}^{n})^2\right){\rm tr}[\texttt{id}]\Omega_3h'(0)(\frac{5\pi^2}{32})dx'.\nonumber
\end{align}
Then,
\begin{align}
\Psi_5&=\sum_{l=1}^{n}\sum_{\nu,i=1}^{n-1}(-2(a_{\nu}^{n})^2(a_{l}^{i})^2+2(a_{l}^{i})^2a_{\nu}^{\nu}a_{n}^{n}+2a_{\nu}^{i}a_{l}^{i}a_{\nu}^{n}a_{l}^{n}-2a_{i}^{i}a_{\nu}^{\nu}){\rm tr}[\texttt{id}]\Omega_3h'(0)(-\frac{\pi^2}{24})dx'
\end{align}
\begin{align}
&+\sum_{l,j,\beta=1}^{n}\sum_{i=1}^{n-1}\left((a_{\beta}^{i})^2a_{l}^{j}\partial_{x_j}(a_{l}^{n})-a_{l}^{i}a_{\beta}^{j}a_{\beta}^{i}\partial_{x_j}(a_{l}^{n})+a_{l}^{i}a_{l}^{j}a_{\beta}^{i}\partial_{x_j}(a_{\beta}^{n})\right){\rm tr}[\texttt{id}]\Omega_3(\frac{\pi^2}{12})dx'\nonumber\\
&+\sum_{l,j,\beta=1}^{n}\sum_{i=1}^{n-1}\left(a_{\beta}^{n}a_{l}^{j}a_{\beta}^{i}\partial_{x_j}(a_{l}^{i})-a_{l}^{n}a_{\beta}^{j}a_{\beta}^{i}\partial_{x_j}(a_{l}^{i})+a_{l}^{n}a_{l}^{j}a_{\beta}^{i}\partial_{x_j}(a_{\beta}^{i})\right){\rm tr}[\texttt{id}]\Omega_3(\frac{\pi^2}{12})dx'\nonumber\\
&+\sum_{l,j,\beta=1}^{n}\sum_{i=1}^{n-1}\left(a_{\beta}^{i}a_{l}^{j}a_{\beta}^{n}\partial_{x_j}(a_{l}^{i})-a_{l}^{i}a_{\beta}^{j}a_{\beta}^{n}\partial_{x_j}(a_{l}^{i})+a_{l}^{i}a_{l}^{j}a_{\beta}^{n}\partial_{x_j}(a_{\beta}^{i})\right){\rm tr}[\texttt{id}]\Omega_3(-\frac{\pi^2}{6})dx'\nonumber\\
&+\sum_{l=1}^{n}\sum_{\nu,i=1}^{n-1}(a_{\nu}^{n})^2(a_{l}^{i})^2{\rm tr}[\texttt{id}]\Omega_3h'(0)(\frac{\pi^2}{24})dx'+\sum_{l=1}^{n}\sum_{\nu,i=1}^{n-1}(a_{\nu}^{i})^2(a_{l}^{n})^2{\rm tr}[\texttt{id}]\Omega_3h'(0)(\frac{\pi^2}{24})dx'\nonumber\\
&+\sum_{l=1}^{n}\sum_{\nu,i=1}^{n-1}\left(2a_{\nu}^{i}a_{l}^{i}a_{\nu}^{n}a_{l}^{n}-(a_{\nu}^{i})^2(a_{l}^{n})^2\right){\rm tr}[\texttt{id}]\Omega_3h'(0)(-\frac{\pi^2}{12})dx'\nonumber\\
&+\sum_{\beta,l=1}^{n}(a_{\beta}^{n})^2(a_{l}^{n})^2{\rm tr}[\texttt{id}]\Omega_3h'(0)(\frac{\pi}{128})dx'+\sum_{\beta,l=1}^{n}\sum_{i=1}^{n-1}(a_{\beta}^{i})^2(a_{l}^{n})^2{\rm tr}[\texttt{id}]\Omega_3h'(0)(-\frac{5\pi^2}{48})dx'\nonumber\\
&+\sum_{\beta,l=1}^{n}\sum_{i=1}^{n-1}\left(2a_{l}^{i}a_{\beta}^{i}a_{l}^{n}a_{\beta}^{n}-(a_{l}^{i})^2(a_{\beta}^{n})^2\right){\rm tr}[\texttt{id}]\Omega_3h'(0)(\frac{5\pi^2}{32})dx'.\nonumber
\end{align}

In summary,
\begin{align}
\Psi&=\Psi_1+\Psi_2+\Psi_3+\Psi_4+\Psi_5\\
&=\sum_{\beta=1}^{n}\sum_{i=1}^{n-1}a_{\beta}^{i}\partial_{x_i}(a_{\beta}^{n}){\rm tr}[\texttt{id}]\Omega_3(-\frac{\pi}{8}+\frac{\pi^2}{3})dx'+\sum_{\beta=1}^{n}\sum_{i=1}^{n-1}a_{\beta}^{n}\partial_{x_i}(a_{\beta}^{i}){\rm tr}[\texttt{id}]\Omega_3(-\frac{\pi^2}{6})dx'.\nonumber
\end{align}
\begin{lem}
\begin{align}
\sum_{\beta=1}^{n}\sum_{i=1}^{n-1}a_{\beta}^{i}\partial_{x_i}(a_{\beta}^{n})=\sum_{\beta=1}^{n}\langle\nabla_{J(e_{\beta})}^{L}(Je_{n}), e_{\beta}\rangle-\sum_{\beta=1}^{n}g^{M}\left(J(\frac{\partial}{\partial{x_{n}}}), \frac{\partial}{\partial{x_{n}}}\right)\langle\nabla_{J(e_{\beta})}^{L}(\frac{\partial}{\partial{x_{n}}}), e_{\beta}\rangle,
\end{align}
\begin{align}
\sum_{\beta=1}^{n}\sum_{i=1}^{n-1}a_{\beta}^{n}\partial_{x_i}(a_{\beta}^{i})=-\sum_{\beta=1}^{n}\langle\nabla_{J(e_{\beta})}^{L}(Je_{n}), e_{\beta}\rangle+\sum_{\beta=1}^{n}g^{M}\left(J(\frac{\partial}{\partial{x_{n}}}), \frac{\partial}{\partial{x_{n}}}\right)\langle\nabla_{J(e_{\beta})}^{L}(\frac{\partial}{\partial{x_{n}}}), e_{\beta}\rangle.
\end{align}
\end{lem}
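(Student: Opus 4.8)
Both identities are to be read at the fixed boundary point $x_0$, and the plan is to expand the covariant-derivative expressions on the right-hand sides into coordinate data and match them with the left-hand sides. The starting dictionary is as follows. Since $J(dx_p)=\sum_h a^p_h dx_h$ is the (transpose) action of $J$ on one-forms, $a^p_h$ is exactly the matrix of $J$ in the coordinate frame, $J(\partial_{x_h})=\sum_p a^p_h\partial_{x_p}$; in particular, at $x_0$, where $e_i(x_0)=\partial_{x_i}(x_0)$ and $g^M_{ij}(x_0)=\delta_{ij}$, one has $a^p_h(x_0)=g^M(J(e_h),e_p)(x_0)=g^M(J(e_p),e_h)(x_0)$, the last equality by self-adjointness of $J$ (an orthogonal involution is symmetric). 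Two algebraic constraints will drive the computation: self-adjointness, which makes the orthonormal-frame matrix $b^p_h:=g^M(J(e_h),e_p)$ symmetric on all of $U$, and $J^2=\texttt{id}$, which gives $\sum_\beta b^i_\beta b^\beta_n=\delta^i_n$; combining these yields the clean pointwise relation
\[
\sum_{\beta=1}^{n} b^i_\beta\, b^n_\beta=0\qquad(i<n)\quad\text{on }U,
\]
which, with its first derivatives, is the algebraic engine of the argument. I also record the simplification $g^M(J(\tfrac{\partial}{\partial x_n}),\tfrac{\partial}{\partial x_n})=a^n_n$, valid near the boundary: in the metric $\tfrac1h g^{\partial M}+dx_n^2$ the field $\partial_{x_n}$ is the unit normal $e_n$ and the metric is block diagonal.

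Next I would expand each right-hand side by Leibniz and evaluate at $x_0$. Writing $J(e_\beta)(x_0)=\sum_\mu a^\mu_\beta(x_0)e_\mu$ and $J(e_n)=\sum_\nu b^\nu_n e_\nu$, one gets
\[
\big\langle \nabla^L_{J(e_\beta)}(J(e_n)),e_\beta\big\rangle(x_0)=\sum_{\mu}a^\mu_\beta\Big(\partial_{x_\mu}(b^\beta_n)+\sum_\nu b^\nu_n\,\omega_{\beta\nu}(e_\mu)\Big)(x_0),
\]
where I used $e_\mu(f)(x_0)=\partial_{x_\mu}(f)(x_0)$ for functions. By Lemma 3.3 the connection one-forms $\omega_{\beta\nu}(e_\mu)(x_0)$ vanish except for the explicit $\pm\tfrac12 h'(0)$ entries, so the second group of terms collapses to an explicit polynomial in the $a$'s times $h'(0)$; for instance the second right-hand term is $-a^n_n\sum_\beta\langle\nabla^L_{J(e_\beta)}(\partial_{x_n}),e_\beta\rangle=\tfrac12 h'(0)\,a^n_n\sum_{i<n}a^i_i$, after using $\partial_{x_n}=e_n$ and $\omega_{i,n}(e_i)(x_0)=-\tfrac12 h'(0)$. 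Thus both right-hand sides reduce to coordinate partials $\partial_{x_\mu}(b^\beta_n)$ plus explicit $h'(0)$-terms.

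The remaining, and main, obstacle is to convert the orthonormal-frame derivatives $\partial_{x_\mu}(b^\beta_n)$ into the coordinate-matrix derivatives $\partial_{x_i}(a^n_\beta)$ that appear in the left-hand sides. Writing the frame change $e_k=\sum_p E^p_k\partial_{x_p}$ with $E(x_0)=I$, the two matrices are conjugate, $A=EBE^{-1}$, so differentiating at $x_0$ gives $\partial_{x_i}a^p_h(x_0)=\partial_{x_i}b^p_h(x_0)+\sum_k\big[(\partial_{x_i}E^p_k)\,b^k_h-b^p_k\,(\partial_{x_i}E^k_h)\big](x_0)$, and a computation of this type yields $\partial_{x_i}E^p_k(x_0)=\omega_{pk}(e_i)(x_0)-\Gamma^p_{ik}(x_0)$, again evaluated by Lemmas 3.3 and 3.4, contributing further $h'(0)$ corrections. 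Substituting this conversion, splitting the $\mu$-sum into $\mu<n$ and $\mu=n$ (the latter feeding the $a^n_n$-term), and collapsing all index sums with the identity $\sum_\beta b^i_\beta b^n_\beta=0$ (and its derivative) together with the symmetry $b^\beta_n=b^n_\beta$, should reproduce exactly the two claimed expressions. A convenient consistency check is that the two right-hand sides are negatives of one another, so it suffices to verify the first identity together with the ``total-derivative'' relation $\sum_{\beta}\sum_{i<n}\partial_{x_i}(a^i_\beta a^n_\beta)=0$ (the sum of the two), the latter being immediate from $\sum_\beta b^i_\beta b^n_\beta\equiv 0$ once the $a\leftrightarrow b$ conversion is in hand. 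The delicate point throughout is the careful bookkeeping of the $h'(0)$ correction terms generated by the frame change, so that they combine into precisely the second term $g^M(J(\tfrac{\partial}{\partial x_n}),\tfrac{\partial}{\partial x_n})\langle\nabla^L_{J(e_\beta)}(\tfrac{\partial}{\partial x_n}),e_\beta\rangle$.
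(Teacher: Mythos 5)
Your overall skeleton --- Leibniz expansion of $\nabla^L_{J(e_\beta)}(Je_n)$, the algebraic inputs $J^2=\texttt{id}$ and symmetry of $J$, and deriving the second identity from the first via $\sum_{\beta}\sum_{i<n}\partial_{x_i}(a^i_\beta a^n_\beta)=0$ --- matches the paper's. But there is a genuine gap exactly where the content of the lemma lies: you funnel everything through the orthonormal-frame matrix $b^p_h$, the connection one-forms, and the frame change $A=EBE^{-1}$, and the decisive claim that all the resulting $h'(0)$ corrections assemble into precisely the term $\sum_\beta g^M\bigl(J(\tfrac{\partial}{\partial x_n}),\tfrac{\partial}{\partial x_n}\bigr)\langle\nabla^L_{J(e_\beta)}\tfrac{\partial}{\partial x_n},e_\beta\rangle$ is only asserted (``should reproduce'', ``delicate bookkeeping''), never carried out. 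That this bookkeeping is not routine is visible in the one instance you do evaluate: with the paper's convention $\omega_{s,t}(X)=\langle\nabla^L_X e_s,e_t\rangle$, Lemma 3.3 gives $\sum_\beta\langle\nabla^L_{J(e_\beta)}\partial_{x_n},e_\beta\rangle=\tfrac12h'(0)\sum_{i<n}a^i_i$, so the second right-hand term is $-\tfrac12h'(0)\,a^n_n\sum_{i<n}a^i_i$, opposite in sign to what you wrote (you read off $\omega_{i,n}$ where $\omega_{n,i}$ is the relevant entry). You also never reconcile the truncated sum $\sum_{i=1}^{n-1}$ on the left with the full $\mu$-sum your expansion produces.

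The paper's proof shows the whole detour is unnecessary. Since $J(\partial_{x_l})=\sum_q a^q_l\,\partial_{x_q}$ with the \emph{same} coordinate matrix $(a^p_h)$ that defines the symbols (this duality is the first observation in the paper's proof), one has $Je_n=\sum_h a^n_h\,\partial_{x_h}$, and Leibniz in the coordinate frame gives at $x_0$
\begin{equation*}
\sum_{\beta}\langle\nabla^L_{J(e_\beta)}(Je_n),e_\beta\rangle
=\sum_{\beta,i=1}^{n}a^i_\beta\,\partial_{x_i}(a^n_\beta)
+\sum_{\beta,h=1}^{n}a^n_h\,\langle\nabla^L_{J(e_\beta)}\partial_{x_h},e_\beta\rangle .
\end{equation*}
The first sum is the left-hand side because the extra $i=n$ term is $\sum_\beta a^n_\beta\partial_{x_n}(a^n_\beta)=\tfrac12\partial_{x_n}\bigl(\sum_\beta(a^n_\beta)^2\bigr)=\tfrac12\partial_{x_n}(1)=0$; in the second sum the $h<n$ part equals $\sum_{h<n}\sum_{\beta,\alpha}a^n_h a^\alpha_\beta\Gamma^\alpha_{\beta h}(x_0)$, which vanishes by Lemma 3.4 combined with $a^n_h=a^h_n$, leaving only the $h=n$ part, which is exactly the correction term. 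No $h'(0)$ cancellation is ever needed. I recommend either completing your frame-change computation in full, or replacing it by this direct coordinate-frame argument.
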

\begin{proof}
We can certainly assume that $J(\frac{\partial}{\partial{x_{l}}})=\sum^{n}_{q=1}\widetilde{a}^{q}_{l}\frac{\partial}{\partial{x_{q}}},$ since
\begin{align}
\langle \frac{\partial}{\partial{x_{l}}}, J(dx_{p})\rangle&=\langle J(\frac{\partial}{\partial{x_{l}}}), dx_{p}\rangle,
\end{align}
then we have $a^{q}_{h}=\widetilde{a}^{q}_{h}.$
As
\begin{align}
J^2=\texttt{id}, J^{T}J=\texttt{id},
\end{align}
we have $a^{p}_{l}a^{j}_{p}=\delta_l^j, a^{p}_{l}=a^{l}_{p}.$
We can get
\begin{align}
&\sum_{\beta=1}^{n}\sum_{i=1}^{n-1}a_{\beta}^{i}\partial_{x_i}(a_{\beta}^{n})=\sum_{\beta,i=1}^{n}a_{\beta}^{i}\partial_{x_i}(a_{\beta}^{n})-\sum_{\beta=1}^{n}a_{\beta}^{n}\partial_{x_n}(a_{\beta}^{n})\\
&=\sum_{\beta,i=1}^{n}a_{\beta}^{i}\partial_{x_i}(a_{\beta}^{n})-\frac{1}{2}\partial_{x_n}(\delta_n^n)=\sum_{\beta,i=1}^{n}a_{\beta}^{i}\partial_{x_i}(a_{\beta}^{n}).\nonumber
\end{align}
We have
\begin{align}
&\sum_{\beta=1}^{n}\langle\nabla_{J(e_{\beta})}^{L}(Je_{n}), e_{\beta}\rangle(x_0)=\sum_{\beta=1}^{n}\langle\nabla_{J(e_{\beta})}^{L}(\sum_{h=1}^{n}a_{h}^{n}\frac{\partial}{\partial{x_{h}}}), e_{\beta}\rangle(x_0)\\
&=\sum_{\beta,h=1}^{n}\langle J(e_{\beta})(a_{h}^{n})\frac{\partial}{\partial{x_{h}}}, e_{\beta}\rangle(x_0)+\sum_{\beta,h=1}^{n}a_{h}^{n}\langle \nabla_{J(e_{\beta})}^{L}(\frac{\partial}{\partial{x_{h}}}), e_{\beta}\rangle(x_0)\nonumber\\
&=\sum_{\beta,i=1}^{n}a_{\beta}^{i}\frac{\partial}{\partial{x_{\beta}}}(a_{\beta}^{n})(x_0)+\sum_{\beta,h=1}^{n}a_{h}^{n}\langle \nabla_{J(e_{\beta})}^{L}(\frac{\partial}{\partial{x_{h}}}), e_{\beta}\rangle(x_0).\nonumber
\end{align}
Let us compute
\begin{align}
&\sum_{\beta,h=1}^{n}a_{h}^{n}\langle \nabla_{J(e_{\beta})}^{L}(\frac{\partial}{\partial{x_{h}}}), e_{\beta}\rangle(x_0)
=\sum_{\beta=1}^{n}\sum_{h=1}^{n-1}a_{h}^{n}\langle \nabla_{e_{\beta}}^{L}(\frac{\partial}{\partial{x_{h}}}), J(e_{\beta})\rangle(x_0)+\sum_{\beta=1}^{n}a_{n}^{n}\langle \nabla_{J(e_{\beta})}^{L}(\frac{\partial}{\partial{x_{n}}}), e_{\beta}\rangle(x_0)\\
&=\sum_{\beta,\alpha=1}^{n}\sum_{h=1}^{n-1}a_{h}^{n}a_{\beta}^{\alpha}\langle \nabla_{\frac{\partial}{\partial{x_{\beta}}}}^{L}(\frac{\partial}{\partial{x_{h}}}), \frac{\partial}{\partial{x_{\alpha}}}\rangle(x_0)
+\sum_{\beta=1}^{n}g^{M}\left(J(\frac{\partial}{\partial{x_{n}}}), \frac{\partial}{\partial{x_{n}}}\right)\langle \nabla_{J(e_{\beta})}^{L}(\frac{\partial}{\partial{x_{n}}}), e_{\beta}\rangle(x_0)\nonumber\\
&=\sum_{\beta,\alpha=1}^{n}\sum_{h=1}^{n-1}a_{h}^{n}a_{\beta}^{\alpha}\Gamma_{\beta h}^{\alpha}(x_0)
+\sum_{\beta=1}^{n}g^{M}\left(J(\frac{\partial}{\partial{x_{n}}}), \frac{\partial}{\partial{x_{n}}}\right)\langle \nabla_{J(e_{\beta})}^{L}(\frac{\partial}{\partial{x_{n}}}), e_{\beta}\rangle(x_0)\nonumber\\
&=\sum_{\beta=1}^{n}g^{M}\left(J(\frac{\partial}{\partial{x_{n}}}), \frac{\partial}{\partial{x_{n}}}\right)\langle \nabla_{J(e_{\beta})}^{L}(\frac{\partial}{\partial{x_{n}}}), e_{\beta}\rangle(x_0),\nonumber
\end{align}
where we used
$\nabla_{\frac{\partial}{\partial{x_{\beta}}}}^{L}(\frac{\partial}{\partial{x_{h}}})=\sum_{k=1}^{n}\Gamma_{\beta h}^{k}\frac{\partial}{\partial{x_{k}}},$ $\sum_{\beta,\alpha=1}^{n}\sum_{h=1}^{n-1}a_{h}^{n}a_{\beta}^{\alpha}\Gamma_{\beta h}^{\alpha}(x_0)=0$ and Lemma A.$2$ in \cite{Wa3}.
We thus get
\begin{align}
&\sum_{\beta=1}^{n}\sum_{i=1}^{n-1}a_{\beta}^{i}\partial_{x_i}(a_{\beta}^{n})=\sum_{\beta=1}^{n}\langle\nabla_{J(e_{\beta})}^{L}(Je_{n}),e_{\beta}\rangle-\sum_{\beta=1}^{n}g^{M}\left(J(\frac{\partial}{\partial{x_{n}}}), \frac{\partial}{\partial{x_{n}}}\right)\langle \nabla_{J(e_{\beta})}^{L}(\frac{\partial}{\partial{x_{n}}}), e_{\beta}\rangle.
\end{align}
We next show that
\begin{align}
0=\sum_{\beta=1}^{n}\sum_{i=1}^{n-1}\partial_{x_i}(a_{\beta}^{i}a_{\beta}^{n})=\sum_{\beta=1}^{n}\sum_{i=1}^{n-1}a_{\beta}^{i}\partial_{x_i}(a_{\beta}^{n})+\sum_{\beta=1}^{n}\sum_{i=1}^{n-1}a_{\beta}^{n}\partial_{x_i}(a_{\beta}^{i}),
\end{align}
that is,
\begin{align}
\sum_{\beta=1}^{n}\sum_{i=1}^{n-1}a_{\beta}^{i}\partial_{x_i}(a_{\beta}^{n})=-\sum_{\beta=1}^{n}\sum_{i=1}^{n-1}a_{\beta}^{n}\partial_{x_i}(a_{\beta}^{i}).
\end{align}
\end{proof}

Applying (3.106), (3.107) and (3.108), we can assert that:
\begin{thm}
Let $M$ be a $4$-dimensional almost product Riemannian spin manifold with the boundary $\partial M$ and the metric
$g^M$ as above, ${{D}_{J}}$ be the $J$-twist of the Dirac operator on $\widetilde{M}$, then
\begin{align}
\widetilde{{\rm Wres}}[\pi^+{{D}_{J}}^{-1}\circ\pi^+{{D}_{J}}^{-1}]=&\int_{M}32\pi^{2}\Big(\sum_{i,j=1}^{4}R(J(e_{i}), J(e_{j}), e_{j}, e_{i})-2\sum_{\nu,j=1}^{4}g^{M}(\nabla_{e_{j}}^{L}(J)e_{\nu}, (\nabla^{L}_{e_{\nu}}J)e_{j})\\
&-2\sum_{\nu,j=1}^{4}g^{M}(J(e_{\nu}), (\nabla^{L}_{e_{j}}(\nabla^{L}_{e_{\nu}}(J)))e_{j}-(\nabla^{L}_{\nabla^{L}_{e_{j}}e_{\nu}}(J))e_{j})\nonumber\\
&-\sum_{\alpha,\nu,j=1}^{4}g^{M}(J(e_{\alpha}), (\nabla^{L}_{e_{\nu}}J)e_{j})g^{M}((\nabla^{L}_{e_{\alpha}}J)e_{j}, J(e_{\nu}))\nonumber\\
&-\sum_{\alpha,\nu,j=1}^{4}g^{M}(J(e_{\alpha}), (\nabla^{L}_{e_{\alpha}}J)e_{j})g^{M}(J(e_{\nu}), (\nabla^{L}_{e_{\nu}}J)e_{j})\nonumber\\
&+\sum_{\nu,j=1}^{4}g^{M}((\nabla^{L}_{e_{\nu}}J)e_{j}, (\nabla^{L}_{e_{\nu}}J)e_{j}))-\frac{1}{3}s\Big)d{\rm Vol_{M} }\nonumber\\
&+\int_{\partial M}(-\frac{\pi}{2}+2\pi^2)\Big(\sum_{\beta=1}^{4}\langle\nabla_{J(e_{\beta})}^{L}(Je_{4}), e_{\beta}\rangle\nonumber\\
&-\sum_{\beta=1}^{4}g^{M}\left(J(\frac{\partial}{\partial{x_{4}}}), \frac{\partial}{\partial{x_{4}}}\right)\langle\nabla_{J(e_{\beta})}^{L}(\frac{\partial}{\partial{x_{4}}}), e_{\beta}\rangle\Big)\Omega_3d{\rm Vol_{\partial M}},\nonumber
\end{align}
where $s$ is the scalar curvature.
\end{thm}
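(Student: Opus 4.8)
The plan is to treat the statement as an assembly of pieces already in place: the splitting formula (3.17) writes $\widetilde{{\rm Wres}}[\pi^+{D}_{J}^{-1}\circ\pi^+{D}_{J}^{-1}]$ as an interior integral $\int_M\int_{|\xi|=1}{\rm trace}_{S(TM)}[\sigma_{-4}({D}_{J}^{-2})]\sigma(\xi)dx$ plus a boundary integral $\int_{\partial M}\Psi$, so I would evaluate these two contributions in turn and then recast the boundary answer geometrically through Lemma 3.8. The overall strategy is therefore to reduce the theorem to the already-computed building blocks and to perform one final algebraic merge.

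For the interior term I would observe that $[\sigma_{-4}({D}_{J}^{-2})]|_M$ has the same local expression as in the boundaryless case, so no boundary-specific computation is needed there. Specializing the closed-manifold formula of Theorem 2.1 to $n=4$, where $2^{n/2}=4$ and the universal constant $\frac{(n-2)(4\pi)^{n/2}}{(n/2-1)!}$ collapses, reproduces exactly the first integral displayed in (3.19)--(3.20), carrying the overall factor $32\pi^2$. This step is pure bookkeeping.

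The substantive part is the boundary integral. Here the constraint $r+l-k-j-|\alpha|-1=-4$ with $r,l\le-1$ forces the five index configurations (cases a) I)--III), b) and c)), producing $\Psi_1,\dots,\Psi_5$. Each $\Psi_m$ is obtained by feeding the symbol formulae of Lemma 3.6 (for $\sigma_0({D}_{J})$) and Lemma 3.7 (for $\sigma_{-1}$ and $\sigma_{-2}$ of ${D}_{J}^{-1}$) into the trace integrand (3.18), applying the projector $\pi^+_{\xi_n}$ via the Cauchy formulae (3.36)--(3.38), reducing Clifford monomials by the trace identities (2.21)--(2.22) and the anticommutation relation, and finally performing the $\xi_n$-integral by residues together with $\int_{|\xi'|=1}\xi_i\xi_j=\frac{4\pi}{3}\delta_i^j$ and the vanishing of odd angular moments. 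Summing $\Psi_1+\cdots+\Psi_5$ collapses, as recorded in (3.105), to just two scalar boundary densities, $A:=\sum_{\beta,i}a_\beta^i\partial_{x_i}(a_\beta^n)$ with weight ${\rm tr}[\texttt{id}]\,\Omega_3(-\frac{\pi}{8}+\frac{\pi^2}{3})$ and $B:=\sum_{\beta,i}a_\beta^n\partial_{x_i}(a_\beta^i)$ with weight ${\rm tr}[\texttt{id}]\,\Omega_3(-\frac{\pi^2}{6})$.

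The final simplification is algebraic. Lemma 3.8 identifies $A$ with the invariant $G:=\sum_{\beta}\langle\nabla^L_{J(e_\beta)}(Je_n),e_\beta\rangle-\sum_{\beta}g^M(J(\partial_{x_n}),\partial_{x_n})\langle\nabla^L_{J(e_\beta)}(\partial_{x_n}),e_\beta\rangle$ through (3.106), and identifies $B$ with $-G$ through (3.107) (the relation $B=-A$ resting on $J^2=\texttt{id}$ and $a_l^p=a_p^l$). Substituting $A=G$ and $B=-G$ merges the two weights into the single factor ${\rm tr}[\texttt{id}]\,\Omega_3\bigl[(-\frac{\pi}{8}+\frac{\pi^2}{3})+\frac{\pi^2}{6}\bigr]$, which equals $\Omega_3(-\frac{\pi}{2}+2\pi^2)$ after inserting ${\rm tr}[\texttt{id}]=4$; hence $\int_{\partial M}\Psi=\int_{\partial M}(-\frac{\pi}{2}+2\pi^2)\,G\,\Omega_3\,d{\rm Vol}_{\partial M}$, which is precisely the asserted boundary term, and adding the interior integral yields the full formula. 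I expect the genuine obstacle to lie in $\Psi_4$ and $\Psi_5$: these invoke $\sigma_{-2}({D}_{J}^{-1})$ and therefore generate degree-six Clifford monomials of the form $c(dx_l)c(dx_\mu)c(dx_n)c(dx_\nu)c(dx_\gamma)c(dx_\beta)$ whose traces must be fully expanded and contracted before any residue can be taken, so the bulk of the effort is in controlling those contractions rather than in the comparatively routine final assembly.
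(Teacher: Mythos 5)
Your proposal follows the paper's own proof essentially verbatim: the same splitting (3.17) into interior and boundary parts, the same specialization of Theorem 2.1 for the interior term, the same five-case decomposition yielding $\Psi_1,\dots,\Psi_5$, the same collapse to the two densities in (3.105), and the same use of Lemma 3.8 to merge the coefficients into $\Omega_3(-\frac{\pi}{8}+\frac{\pi^2}{2})\,{\rm tr}[\texttt{id}]=\Omega_3(-\frac{\pi}{2}+2\pi^2)$. The final arithmetic checks out, so there is nothing to add.
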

\section{ A Kastler-Kalau-Walze type theorem for $3$-dimensional manifolds with boundary}
For an odd-dimensional manifolds with boundary, as in Sections 5, 6 and 7 in \cite{Wa1}, we have the formula
\begin{align}
&\widetilde{{\rm Wres}}[(\pi^+{{D}_{J}}^{-1})^2]=\int_{\partial M}\Phi.
\end{align}
When $n=3,$ then in (3.10) $r+l-k-|\alpha|-j-1=-3,~~r,l\leq -1,$ then
\begin{align}
\Phi&=\int_{|\xi'|=1}\int^{+\infty}_{-\infty} {\rm trace}_{S(TM)}[\sigma^+_{-1}(D_{J}^{-1})(x',0,\xi',\xi_n)\times\partial_{\xi_n}\sigma_{-1}(D_{J}^{-1})(x',0,\xi',\xi_n)]d\xi_n\sigma(\xi')dx'.
\end{align}
We have
\begin{align}
\pi^+_{\xi_n}\left(\frac{ic[J(\xi)]}{|\xi|^2}\right)(x_0)|_{|\xi'|=1}
=\frac{1}{2(\xi_{n}-i)}\sum^{n}_{h=1}\sum^{n-1}_{p=1}\xi_{p}a_{h}^{p}c(dx_{h})+\frac{i}{2(\xi_{n}-i)}\sum^{n}_{h=1}a_{h}^{n}c(dx_{h}),
\end{align}
\begin{align}
\partial_{\xi_n}\left(\frac{ic[J(\xi)]}{|\xi|^2}\right)(x_0)|_{|\xi'|=1}
&=-\frac{2i\xi_n}{(1+\xi_{n}^2)^2}\sum^{n}_{\beta=1}\sum^{n-1}_{i=1}\xi_{i}a_{\beta}^ic(dx_{\beta})+\frac{i(1-\xi_n^2)}{(1+\xi_{n}^2)^2}\sum^{n}_{\beta=1}a_{\beta}^nc(dx_{\beta}).
\end{align}
For $n=3,$ we take the coordinates in Section 2. Locally $S(TM)|_{\widetilde{U}}\cong\widetilde{U}\times\wedge_{\rm{C}}^{even}(2).$ Let ${\widetilde{f_{1}},\widetilde{f_{2}}}$ be an orthonormal basis $\wedge_{\rm{C}}^{even}(2)$ and we will compute the trace under this basis.
We get ${\rm tr}[\texttt{id}]=2.$\\
Then
\begin{align}
&{\rm trace}_{S(TM)}[\sigma^+_{-1}(D_{J}^{-1})\times\partial_{\xi_n}\sigma_{-1}(D_{J}^{-1})](x_0)|_{|\xi'|=1}\\
&=-\frac{i\xi_n}{(\xi_n-i)^3(\xi_n+i)^2}\sum_{h,\beta=1}^{n}\sum_{p,i=1}^{n-1}{\rm tr}[\xi_{p}\xi_{i}a_{h}^{p}a_{\beta}^{i}c(dx_{h})c(dx_{\beta})]\nonumber\\
&+\frac{i(1-\xi_n^2)}{2(\xi_n-i)^3(\xi_n+i)^2}\sum_{h,\beta=1}^{n}\sum_{p=1}^{n-1}{\rm tr}[\xi_{p}a_{h}^{p}a_{\beta}^{n}c(dx_{h})c(dx_{\beta})]\nonumber\\
&+\frac{\xi_n}{(\xi_n-i)^3(\xi_n+i)^2}\sum_{h,\beta=1}^{n}\sum_{i=1}^{n-1}{\rm tr}[\xi_{i}a_{h}^{n}a_{\beta}^{i}c(dx_{h})c(dx_{\beta})]\nonumber\\
&-\frac{1-\xi_n^2}{2(\xi_n-i)^3(\xi_n+i)^2}\sum_{h,\beta=1}^{n}{\rm tr}[a_{h}^{n}a_{\beta}^{n}c(dx_{h})c(dx_{\beta})].\nonumber
\end{align}
By the Cauchy integral formula, we get
\begin{align}
\Phi&=\sum_{\beta=1}^{n}\sum_{i=1}^{n-1}(a_{\beta}^{i})^2{\rm tr}[\texttt{id}]\Omega_2(\frac{i\pi^2}{6})dx'+\sum_{\beta=1}^{n}(a_{\beta}^{n})^2{\rm tr}[\texttt{id}]\Omega_2(\frac{i\pi}{8})dx'.
\end{align}
\begin{thm}
Let $M$ be a $3$-dimensional almost product Riemannian spin manifold with the boundary $\partial M$ and the metric
$g^M$ as above, ${{D}_{J}}$ be the $J$-twist of the Dirac operator on $\widetilde{M}$, then
\begin{align}
\widetilde{{\rm Wres}}[(\pi^+{{D}_{J}}^{-1})^2]
&=\int_{\partial M}\Big(\sum_{\beta=1}^{3}\sum_{i=1}^{2}\frac{i\pi^2}{3}{\langle J(e_{\beta}), e_{i}\rangle}^2+\sum_{\beta=1}^{3}\frac{i\pi}{4}{\langle J(e_{\beta}), e_{3}\rangle}^2\Big)\Omega_2d{\rm Vol_{\partial M}}\\
&=\int_{\partial M}\Big(\sum_{\beta=1}^{3}\sum_{i=1}^{2}\frac{2i\pi^3}{3}{\langle J(e_{\beta}), e_{i}\rangle}^2+\sum_{\beta=1}^{3}\frac{i\pi^2}{2}{\langle J(e_{\beta}), e_{3}\rangle}^2\Big)d{\rm Vol_{\partial M}}.\nonumber
\end{align}
\end{thm}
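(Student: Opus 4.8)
The plan is to localize the noncommutative residue at the boundary and evaluate the single surviving symbol integral. Since $n=3$ is odd, the interior symbol integral contributes nothing to $\widetilde{{\rm Wres}}[(\pi^+{D_J}^{-1})^2]$, so by (4.1) the residue collapses to $\int_{\partial M}\Phi$ with $\Phi$ as in (4.2). The degree count $r+l-k-|\alpha|-j-1=-3$ subject to $r,l\le-1$ admits only the case $r=l=-1,\ k=j=|\alpha|=0$; I would record this reduction first, so that (4.2) is justified as carrying a single term.

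I would then compute $\Phi$ from its two factors. The only symbol needed is $\sigma_{-1}({D_J}^{-1})=ic[J(\xi)]/|\xi|^2$ (Lemma 3.7), whose projection $\pi^+_{\xi_n}\sigma_{-1}({D_J}^{-1})$ and derivative $\partial_{\xi_n}\sigma_{-1}({D_J}^{-1})$ on $|\xi'|=1$ are already given by (4.3) and (4.4). Writing $J(dx_p)=\sum_h a_h^p\,dx_h$, I would form the product, take the Clifford trace over the two-dimensional fibre $\wedge_{\mathbb C}^{even}(2)$ using ${\rm tr}[\texttt{id}]=2$ and ${\rm tr}[c(dx_\beta)c(dx_h)]=-\delta_{\beta h}{\rm tr}[\texttt{id}]$, and arrive at the four-term integrand (4.5). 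Two integrations remain: the inner integral $\int_{-\infty}^{+\infty}d\xi_n$ is evaluated by residues at the single upper half-plane pole $\xi_n=i$, and the outer integral over the cosphere $|\xi'|=1$ uses the vanishing of odd moments together with $\int_{|\xi'|=1}\xi_i\xi_j\,\sigma(\xi')=\pi\delta_{ij}$ and the total mass $\Omega_2=2\pi$. This is entirely parallel to cases a)--c) of Section 3 and yields (4.6).

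Finally I would read off the statement from (4.6). Inserting ${\rm tr}[\texttt{id}]=2$ sends the coefficients $\frac{i\pi^2}{6}$ and $\frac{i\pi}{8}$ to $\frac{i\pi^2}{3}$ and $\frac{i\pi}{4}$, which is the first line of the assertion; substituting $\Omega_2=\frac{2\pi}{\Gamma(1)}=2\pi$ then produces the second line. The geometric identification $a_\beta^i=\langle J(e_\beta),e_i\rangle$ follows from $J(\partial/\partial x_l)=\sum_q a_l^q\,\partial/\partial x_q$ (established in Section 3): evaluated at the base point $x_0$, where $e_\beta=\partial/\partial x_\beta$ and the frame is orthonormal, one gets $\langle J(e_\beta),e_i\rangle=a_\beta^i$, and $e_3=\partial/\partial x_3$ is the normal direction so that $a_\beta^3=\langle J(e_\beta),e_3\rangle$. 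Because the chosen normal coordinates satisfy $g^{\partial M}_{ij}(x_0)=\delta_{ij}$, the local density $dx'$ coincides with $d{\rm Vol}_{\partial M}$, so the pointwise computation integrates to the invariant boundary integral claimed.

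The main obstacle is the middle step: carrying the Clifford traces correctly through the $\pi^+_{\xi_n}$ projection and extracting the exact constants $\frac{i\pi^2}{6}$ and $\frac{i\pi}{8}$ from the residue and cosphere integrals, where sign and normalization slips are easy. A secondary point of care is confirming that the resulting expression in the local coefficients $a_\beta^i$ is coordinate-independent, so that it legitimately assembles into the invariant integrand built from $\langle J(e_\beta),e_i\rangle$.
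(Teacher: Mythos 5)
Your outline follows exactly the route the paper takes: reduce to the boundary term via (4.1), observe that the constraint $r+l-k-|\alpha|-j-1=-3$ with $r,l\le -1$ forces $r=l=-1$, $k=j=|\alpha|=0$, insert $\sigma_{-1}(D_J^{-1})=ic[J(\xi)]/|\xi|^2$, take the Clifford trace on $\wedge^{even}_{\mathbb C}(2)$ with ${\rm tr}[\texttt{id}]=2$, kill the odd moments, and finish with a residue at $\xi_n=i$ and a cosphere integral. The reduction, the trace identities, and the identification $a^i_\beta=\langle J(e_\beta),e_i\rangle$ at $x_0$ are all as in the paper.

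The genuine problem is the one you yourself flag as the "main obstacle": the normalization of the $\xi'$-integral, and here your stated conventions do not reproduce the theorem's coefficients. With your values $\int_{|\xi'|=1}\xi_i\xi_j\,\sigma(\xi')=\pi\delta_{ij}$ and $\int_{|\xi'|=1}\sigma(\xi')=2\pi$, the two surviving terms of (4.5) give, after the residue computation
$\int_{-\infty}^{+\infty}\frac{i\xi_n}{(\xi_n-i)^3(\xi_n+i)^2}d\xi_n=\int_{-\infty}^{+\infty}\frac{1-\xi_n^2}{2(\xi_n-i)^3(\xi_n+i)^2}d\xi_n=\frac{i\pi}{8},$
the boundary density $\frac{i\pi^2}{4}\sum_{\beta,i}\langle J(e_\beta),e_i\rangle^2+\frac{i\pi^2}{2}\sum_{\beta}\langle J(e_\beta),e_3\rangle^2$. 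The normal-direction term matches the theorem, but the tangential term does not: the theorem asserts $\frac{2i\pi^3}{3}$, which is larger by a factor $\frac{8\pi}{3}$. The paper arrives at its coefficient in (4.6) by evaluating the second moment as $\Omega_2\cdot\frac{4\pi}{3}\cdot\frac{i\pi}{8}=\Omega_2\cdot\frac{i\pi^2}{6}$, i.e.\ it retains the factor $\frac{4\pi}{3}$ quoted for the $n=4$ cosphere $S^2$ together with an overall $\Omega_2$, rather than using the $S^1$ moment $\pi\delta_{ij}$. So as written your argument proves a statement with a different tangential coefficient than the one claimed; to land on the theorem you would have to adopt the paper's normalization of $\int_{|\xi'|=1}\xi_i\xi_j\,\sigma(\xi')$, and you should confront explicitly which of the two is the correct convention for the one-dimensional cosphere before asserting the displayed constants.
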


\vskip 1 true cm

\section{Declarations}
Ethics approval and consent to participate No applicable.\\

Consent for publication No applicable.\\

Availability of data and material The authors confirm that the data supporting the findings of this study are available within the article.\\

Competing interests The authors declare no conflict of interest.\\

Funding This research was funded by National Natural Science Foundation of China: No.11771070.\\

Authors' contributions All authors contributed to the study conception and design. Material preparation, data collection and analysis were performed by Siyao Liu and Yong Wang. The first draft of the manuscript was written by Siyao Liu and all authors commented on previous versions of the manuscript. All authors read and approved the final manuscript.\\

\vskip 1 true cm


\bigskip
\bigskip

\noindent {\footnotesize {\it S. Liu} \\
{School of Mathematics and Statistics, Northeast Normal University, Changchun 130024, China}\\
{Email: liusy719@nenu.edu.cn}

\noindent {\footnotesize {\it Y. Wang} \\
{School of Mathematics and Statistics, Northeast Normal University, Changchun 130024, China}\\
{Email: wangy581@nenu.edu.cn}

\end{document}